\DeclareMathOperator{\colim}{colim}
\DeclareMathOperator{\el}{el}
\DeclareMathOperator{\epi}{epi}
\DeclareMathOperator{\Hom}{Hom}
\DeclareMathOperator{\Id}{Id}
\DeclareMathOperator{\op}{op}
\DeclareMathOperator{\tr}{tr}
\DeclareMathOperator{\res}{res}
\newcommand{\mF}{{\mathbb F}}
\newcommand{\mR}{{\mathbb R}}
\newcommand{\mS}{{\mathbb S}}
\newcommand{\mZ}{{\mathbb Z}}
\newcommand{\cG}{{\mathcal G}}
\newcommand{\cN}{\mathcal N}
\newcommand{\SH}{{\mathcal{SH}}}
\newcommand{\GH}{{\mathcal {GH}}}
\newcommand{\elRO}{{\el^{R O}_2}}
\newcommand{\bmO}{\mathbf{mO}}
\newcommand{\bBO}{\mathbf{BO}}
\newcommand{\bMO}{\mathbf{MO}}
\newcommand{\iso}{\cong}
\newcommand{\sm}{\wedge}
\newcommand{\tensor}{\otimes}
\newcommand{\xra}{\xrightarrow}
\newcommand{\xla}{\xleftarrow}
\newcommand{\upi}{{\underline \pi}}
\newcommand{\td}[1]{\langle #1\rangle}
\renewcommand{\to}{\longrightarrow}
\newcommand{\dbr}[1]{\llbracket #1\rrbracket}
\newcommand{\lr}[1]{(\!( #1 )\!)}
\numberwithin{equation}{section}
\newtheorem{theorem}[equation]{Theorem}
\newtheorem{cor}[equation]{Corollary}
\newtheorem{prop}[equation]{Proposition}
\theoremstyle{definition}
\newtheorem{defn}[equation]{Definition}
\newtheorem{rk}[equation]{Remark}
\newtheorem{eg}[equation]{Example}
\newtheorem{construction}[equation]{Construction}
\begin{document}

\title[The universal property of bordism of commuting involutions]{The universal property of bordism\\of commuting involutions}
\author{Markus Hausmann}
\author{Stefan Schwede}
\email{hausmann@math.uni-bonn.de, schwede@math.uni-bonn.de}
\address{Mathematisches Institut, Universit{\"a}t Bonn, Germany}

\begin{abstract}
  We propose a formalism to capture the structure of the equivariant bordism
  rings of smooth manifolds with commuting involutions.
  We introduce the concept of an {\em oriented $\elRO$-algebra},
  an algebraic structure featuring representation graded rings
  for all elementary abelian 2-groups, connected by restriction homomorphisms,
  a pre-Euler class, and an inverse Thom class;
  this data is subject to one exactness property.
  Besides equivariant bordism, oriented global ring spectra also give rise to
  oriented $\elRO$-algebras, so examples abound.
  Inverting the inverse Thom classes yields a global 2-torsion group law.
  In this sense, our oriented $\elRO$-algebras are delocalized generalizations of 
  global 2-torsion group laws.

  Our main result shows that equivariant bordism for elementary abelian 2-groups
  is an initial oriented $\elRO$-algebra.
  Several other interesting equivariant homology theories can also be characterized,
  on elementary abelian 2-groups, by similar universal properties.
  We prove
  that stable equivariant bordism
  is an initial $\elRO$-algebra with an invertible orientation;
  that Bredon homology with constant mod 2 coefficients
  is an initial $\elRO$-algebra with an additive orientation;
  and that Borel equivariant homology with mod 2 coefficients
  is an initial $\elRO$-algebra with an orientation that is both additive and invertible.\medskip\\
  2020 MSC: 55N22, 55N91, 55P91, 57R85
\end{abstract}

\maketitle

\tableofcontents

\section*{Introduction}

The purpose of this paper is to unveil the universal property of the equivariant bordism rings
for elementary abelian 2-groups. More precisely, we will show that
equivariant bordism of manifolds with commuting involutions
is an initial example of a specific kind of algebraic structure that we call
an {\em oriented $\elRO$-algebra}. The superscript `$R O$' refers to the
feature that gradings by real representations are hard wired into our theory.
Our formalism also allows to characterize several other interesting equivariant
homology theories, on elementary abelian 2-groups,
by similar universal properties in the category of oriented $\elRO$-algebras;
examples include stable equivariant bordism,
Bredon homology with constant mod 2 coefficients, and Borel cohomology with mod 2 coefficients.
In short, stable equivariant bordism is initial among
oriented $\elRO$-algebras equipped with an invertible orientation;
Bredon homology is initial among
oriented $\elRO$-algebras equipped with an additive orientation;
and Borel cohomology is initial among
oriented $\elRO$-algebras equipped with an orientation that simultaneously invertible and additive.

The key new insight that we wish to promote in this paper is that the
algebraic structure of an orientable $\elRO$-algebra arises very naturally
in the study of equivariant homology theories for elementary abelian 2-groups,
and that it greatly clarifies and conceptualizes the `true' nature of several prominent
equivariant theories. Indeed, for all the theories mentioned in the previous paragraph,
there is a vast literature of explicit calculations,
in particular for the group with two elements.
Examples include \cite{alexander, beem-2mod4, beem-C_4,beem-rowlett, boardman:revisited, carlisle:cobordism, conner-floyd, firsching, sinha-C_2 bordism, stong:unoriented_finite, stong:Z_2^k-actions}
for equivariant bordism,
\cite{broecker-hook, firsching, sinha-C_2 bordism} for stable equivariant bordism,
and \cite{basu-dey, caruso:operations, dugger:atiyah-hirzebruch, hausmann-schwede:Bredon, holler-kriz, hu-kriz, kriz-lu, kriz:notes_equivariant, sikora:EMspectra, sikora-yan} for Bredon homology and cohomology.
Many of the answers are in terms of lists of generators and relations.
While often quite explicit, the present authors do not find many
of these calculations conceptually enlightening,
and contemplating the answers left us wondering what it actually means to `calculate'
or understand such kinds of mathematical structures.
Furthermore, none of the previous results we are aware of relate
to Quillen's algebraic interpretation \cite{quillen:formal_groups}
of the bordism ring $\mathcal{N}_*$ as the $2$-torsion Lazard ring,
carrying the universal formal group law with trivial $2$-series. 

Our approach shifts the emphasis away from explicit calculations of equivariant
bordism or cohomology rings for individual groups of equivariance;
we focus on the question: `What do the equivariant rings represent?'
A lesson we learned is that for getting interesting answers, one might want to:
\begin{itemize}
\item consider all elementary abelian 2-groups simultaneously,
  keeping track of the contravariant functoriality in group homomorphisms, and
\item remember and exploit the $R O$-grading by real representations.
\end{itemize}
We implement these two aspects in the structure an {\em oriented $\elRO$-algebra}.
Central to this notion are two distinguished classes of elements,
the `pre-Euler classes' $a_\lambda$ and the `inverse Thom classes' $t_\lambda$,
indexed by all nontrivial characters $\lambda$ of elementary abelian $2$-groups.
The structure is subject to one key regularity property:
multiplication by the pre-Euler classes of nontrivial characters must be injective,
with cokernel given by the value at the kernel of the given character,
while the classes $t_\lambda$ must restrict to the unit at the trivial group.
In the case of equivariant bordism, $a_\lambda$ equals the bordism class
of the inclusion $\{0\}\to S^\lambda$ of the origin into
the representation sphere associated to $\lambda$,
while $t_\lambda$ is given by the bordism class of the identity map $S^\lambda\to S^\lambda$.
In the case of Bredon homology, the classes $a_\lambda$ and $t_\lambda$ generate
the entire representation-graded ring,
subject to certain explicit polynomial relations \cite{hausmann-schwede:Bredon}.
The precise definition of an oriented $\elRO$-algebra will be given in 
Definition \ref{def:oriented R-algebra}.
Notably, the category of oriented $\elRO$-algebras contains the category
of $2$-torsion formal group laws as a reflective subcategory
via the `complete $t$-invertible' objects, see Section \ref{sec:global-2-torsion group}.
Hence, in very loose terms, an oriented $\elRO$-algebra can be viewed
as a delocalized and decompleted version of a $2$-torsion formal group law.
\smallskip

After this attempt to provide motivation for our formalism,
we now list our main results.
The first of these, to be proved as Theorem \ref{thm:main} below,
makes the title of this paper precise:\smallskip

{\bf Theorem A.} {\em Equivariant bordism is an initial oriented $\elRO$-algebra.}\smallskip

We emphasize that in contrast to all previous results that we are aware of,
Theorem A characterizes geometric equivariant bordism in absolute terms rather than
relative to the non-equivariant bordism ring $\cN_*$.
In particular, no a priori connection to formal group laws,
nor any $\cN_*$-algebra structure, are directly built into the definition of an
oriented $\elRO$-algebra. Still, the regularity properties of the pre-Euler classes
together with the functoriality in elementary abelian $2$-groups
indirectly induce nontrivial structure also over the trivial group,
namely that of a $2$-torsion formal group law.
As such, Theorem A can be viewed as a refinement
of Quillen's theorem \cite{quillen:formal_groups};
however, our Theorem A does not provide an independent proof of Quillen's result,
as we rely on it as central input.
\smallskip

We now turn to our next result. Stable equivariant bordism is a certain localization of equivariant
bordism first considered by Br{\"o}cker and Hook \cite[\S 2]{broecker-hook}.
We call an orientation of an $\elRO$-algebra {\em invertible}
if the inverse Thom classes for all characters of elementary abelian 2-groups
act invertibly, see Definition \ref{def:invertible}.
Stable equivariant bordism is, essentially by design, the oriented $\elRO$-algebra
obtained from equivariant bordism by inverting all those inverse Thom classes.
The following result is thus an immediate corollary of Theorem A:\smallskip

{\bf Theorem B.} {\em Stable equivariant bordism is an initial invertibly
  oriented $\elRO$-algebra.}\smallskip

Theorem B will be proved as Corollary \ref{cor:stable bordism} below.
As we explain at the end of Section \ref{sec:global-2-torsion group},
combining Theorem B with the main result
of Br{\"o}cker and Hook \cite{broecker-hook}
yields an independent proof of a recent result of the first author,
namely that the global 2-torsion group law carried by the global Thom spectrum $\bMO$ is initial,
see \cite[Theorem D]{hausmann:group_law}.
In fact, this earlier result of the first author was a key motivation for the present project,
and some of our arguments are inspired by arguments from \cite{hausmann:group_law}.\smallskip

The next interesting example is Bredon homology with coefficients
in the constant Mackey functor with value $\mF_2$; by definition,
the $\mZ$-graded part of this theory is a copy of $\mF_2$ concentrated in dimension 0.
These Bredon homology groups have attracted a certain amount of attention,
and we listed specific references with many explicit calculations above.
In this paper we show an algebraic universal property for the coefficients of Bredon homology.
The oriented $\elRO$-algebra of Bredon homology has a unique orientation,
and this orientation is {\em additive} in the sense of Definition \ref{def:additive}.
We use the adjective `additive' because the formal group law
arising from an additive orientation is the additive one, see
Example \ref{eg:additive is additive}.
\smallskip

{\bf Theorem C.} {\em Bredon homology with constant $\mF_2$-coefficients is an initial
  additively oriented $\elRO$-algebra.}\smallskip

Theorem C is proved as Theorem \ref{thm:Bredon initial} below.
In our paper \cite{hausmann-schwede:Bredon}, we provide an explicit presentation
of the representation-graded Bredon homology rings with constant $\mF_2$-coefficients.\smallskip

The last example whose universal property we discuss is the 
mod 2 global Borel spectrum $b(H\mF_2)$, representing
Borel equivariant cohomology with $\mF_2$-coefficients.
The associated $\elRO$-algebra has a unique orientation, and this unique orientation
is both additive and invertible.
We prove in Theorem \ref{thm:Borel initial} that
mod 2 global Borel cohomology is characterized by such an orientation:\smallskip

{\bf Theorem D.} {\em Borel cohomology with $\mF_2$-coefficients is initial among
  oriented $\elRO$-algebras whose orientation is both additive and invertible.}\medskip

In Section \ref{sec:classical} we explain
how our theory can be used to reprove various
previous results on equivariant bordism,
in particular the Conner--Floyd exact sequence \cite{conner-floyd},
Alexander's description \cite{alexander} of an explicit basis over $\mathcal{N}_*$
for the bordism ring of involutions,
and Firsching's pullback square \cite{firsching}.
In all cases, the results in fact hold for a large class of oriented $\elRO$-algebras,
and not just for the initial one given by equivariant bordism.\smallskip

{\bf Acknowledgments.}
The authors are members of the Hausdorff Center for Mathematics
at the University of Bonn (DFG GZ 2047/1, project ID 390685813).
A substantial part of the work for this paper was done while the second author
spent the summer term 2023 on sabbatical at Stockholm University, with financial support
from the Knut and Alice Wallenberg Foundation;
the second author would like to thank SU for the hospitality
and stimulating atmosphere during this visit.

\section{Oriented \texorpdfstring{$\elRO$}{el}-algebras}

In this section we introduce our main characters, the {\em oriented $\elRO$-algebras}.
One of the key points of this paper is the insight that to unveil the universal property of
equivariant bordism for elementary abelian 2-groups, one needs to take the grading by
representations into account.
More precisely, the proper book keeping needs the `effective' part
of the $R O$-grading,
i.e., the part that keeps track of the integer graded bordism groups of representation spheres.

We start by fixing notation and terminology to deal with the $R O$-grading.
For every finite group $G$, we define a submonoid $I_G$ of the real representation
ring $R O(G)$ by
\[ I_G \ = \ \{ m\in R O(G)\ : \ m = k- V \text{ for some $k\in\mZ$ and some $G$-representation $V$}\} \ .\]
So the elements of $I_G$ are formal differences of integers and actual representations.
These submonoids are closed under restriction along homomorphisms
$\alpha:K\to G$ between finite groups, i.e., $\alpha^*:R O(G)\to R O(K)$
sends $I_G$ to $I_K$. We emphasize that unless the group $G$ is trivial,
the abelian monoid $I_G$ is {\em not} a group.

\begin{construction}[The category $\elRO$]
  We introduce a category $\elRO$ of effective representations of elementary abelian 2-groups.
  Objects of $\elRO$ are pairs $(A,m)$, where $A$ is an elementary abelian 2-group and $m\in I_A$.
  Morphisms from $(B,n)$ to $(A,m)$ in the category $\elRO$
  are group homomorphisms $\alpha:B\to A$ such that $\alpha^*(m)=n$.
  
  We give the category $\elRO$ a symmetric monoidal structure as follows.
  On objects, we set
  \[ (A,m)\times (B,n)\ = \ (A\times B, m\oplus n)\ , \]
  where
  \[ m\oplus n \ = \ p_1^*(m) + p_2^*(n) \]
  is the external sum, with $p_1:A\times B\to A$ and $p_2:A\times B\to B$ the projections to the two factors.
  On morphisms, the symmetric monoidal structure is given by product of homomorphisms.
  The symmetry and associativity isomorphisms are the symmetry and associativity
  isomorphism for the product of groups.
  The unit object is $(1,0)$, the pair consisting of the trivial group $1$
  and its zero representation. A symmetric monoidal structure on a category is at the same time
  a symmetric monoidal structure on its opposite.
\end{construction}

\begin{defn}
  An {\em $\elRO$-algebra} is a lax symmetric monoidal functor
  \[ X \ : \ (\elRO)^{\op}\ \to \ \mF_2\text{-mod} \]
  to the category of $\mF_2$-vector spaces under tensor product.
  A {\em morphism} of  $\elRO$-algebras is a monoidal transformation.
\end{defn}

Since symmetry is part of the definition of an $\elRO$-algebra, a more precise
name for these objects would have been {\em commutative} $\elRO$-algebras.
Since we will not consider non-commutative $\elRO$-algebras
(i.e., non-symmetric lax monoidal functors), we have opted for the simpler name.

Expanding the definition reveals that an
 $\elRO$-algebra $X$ consists of
\begin{itemize}
\item $\mF_2$-vector spaces $X(A,m)$
for every elementary abelian 2-group $A$ and every $m\in I_A$,
\item a restriction homomorphism
\[ \alpha^* \ : \ X(A,m)\ \to \ X(B,\alpha^*(m))\ , \]
for every group homomorphism $\alpha:B\to A$ and every $m\in I_A$,
\item
  associative and commutative pairings
\[ \times \ : \ X(A,m)\tensor X(B,n)\ \to \ X(A\times B, m\oplus n) \]
with a multiplicative unit $1\in X(1,0)$.
\end{itemize}
Moreover, the restriction homomorphisms must be contravariantly functorial
and strictly compatible with the multiplicative structure.
A morphism $f:X\to Y$ of  $\elRO$-algebras consists of
additive maps $f(A,m):X(A,m)\to Y(A,m)$ such that $f(1,0)(1)=1$
and 
\[ f(A\times B,m\oplus n)(x\times y)\ = \ f(A,m)(x)\times f(B,n)(y)\]
for all $x\in X(A,m)$ and $y\in X(B,n)$.

The multiplicative structure could alternatively be given in `internal form'
\[ \cdot \ : \ X(A,m)\tensor X(A,n)\ \to \ X(A,m+ n)\ , \]
as the composite
\[  X(A,m)\tensor X(A,n)\  \xra{\ \times \ } \  X(A\times A,m\oplus n)\
  \xra{\Delta^*}  X(A,m+ n)\ , \]
where $\Delta:A\to A\times A$ is the diagonal.
For fixed $A$ and varying $m$,
this makes the collection of groups $X(A,m)$ into a
commutative $I_A$-graded $\mF_2$-algebra.
We will often employ the notation $X(A,\star)$ for this $I_A$-graded $\mF_2$-algebra.
The abelian groups $X(A,k)$ for $k\in \mZ$
form a commutative $\mZ$-graded $\mF_2$-algebra,
the {\em integer graded subring} of this $I_A$-graded $\mF_2$-algebra $X(A,\star)$,
which we denote by $X(A)_*$.
The external multiplication can be recovered from the internal multiplication  as the composite
\[ 
  X(A,m)\tensor X(B,n)\
  \xra{p_1^*\tensor p_2^*} \
  X(A\times B,p_1^*(m))\tensor X(A\times B,p_2^*(n))\
  \xra{\ \cdot\ }  X(A\times B,m\oplus n)\ .  \]

\begin{rk}
  The cautious reader might wonder why we consider functors on {\em isomorphism classes}
  of representations, and why we abuse notation and use the same symbol for a representation
  and for its class in $R O(G)$. In other words: why does our theory not
  take automorphisms of representations into account?
  The reason is that the automorphisms of real representations,
  and even all equivariant homotopy self-equivalences of linear spheres,
  are invisible to the theories we consider.
  We explain this in detail in Theorem \ref{thm:oriented consequences}, but here is already a brief sketch.
  All theories relevant to us arise from
  global ring spectra $E$ in which $\tr_1^C(1)=0$, i.e., the transfer from the trivial
  group to the two element group $C$ vanishes in $\pi_0^C(E)$. This property has strong consequences
  for the coefficients of the theory: for every compact Lie group $G$,
  the unit ring map $\pi_0^G(\mS)\to\pi_0^G(E)$ from the $G$-equivariant 0-stem sends all
  units to 1. Consequently, all $G$-equivariant homotopy self-equivalences of
  spheres of real $G$-representations induce the identity
  in the $G$-homology theory represented by $E$.
\end{rk}

In the following we write $C=\{\pm 1\}$ for the group with two elements.
A {\em character} of a group $G$ is a homomorphism $\lambda:G\to C$.
We shall consistently confuse such characters with the associated 1-dimensional
$G$-representation on $\mR$ with $G$-action $g\cdot x=\lambda(g)\cdot x$,
and with the class of this representation in the representation ring $R O(G)$.
We write $\sigma$ for the sign representation of $C$ on $\mR$, corresponding
to the identity character of $C$.

\begin{defn}[Orientable $\elRO$-algebras]\label{def:oriented R-algebra}
  An {\em orientable $\elRO$-algebra} is a pair $(X,a)$ consisting of an $\elRO$-algebra $X$
  and a class $a\in X(C,-\sigma)$ with the following property:
  for every elementary abelian 2-group $A$, every $m\in I_A$ and every
  nontrivial $A$-character $\lambda$, the sequence
  \[ 0 \ \to \ X(A,m)\ \xra{a_\lambda\cdot-}\ X(A,m -\lambda)\
    \xra{\res^A_K}\ X(K,\res^A_K(m)-1) \ \to \ 0 \]
  is exact, where $K$ is the kernel of $\lambda$, and $a_\lambda=\lambda^*(a)$.
  We refer to the class $a$ as the {\em pre-Euler class} of the orientable $\elRO$-algebra.
  A {\em morphism} of orientable $\elRO$-algebras is a morphism of $\elRO$-algebras
  that takes the pre-Euler class to the pre-Euler class.
\end{defn}

An easy but useful fact is that a morphism
of orientable $\elRO$-algebras is already an isomorphism
if its restriction to all the integer graded subrings is:

\begin{prop}\label{prop:Z-graded iso suffices}
  Let $f:X\to Y$ be a morphism of orientable $\elRO$-algebras.
  If all the homomorphisms of $\mZ$-graded rings  $f(A)_*:X(A)_*\to Y(A)_*$ are bijective,
  then $f$ is an isomorphism of $\elRO$-algebras.
\end{prop}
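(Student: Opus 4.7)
The plan is a double induction that reduces every bidegree $(A,m)$ to the $\mZ$-graded case, using the defining exactness property of an orientable $\elRO$-algebra to step down. The outer induction is on $r = \text{rank}(A)$, and the inner induction is on $d = \dim V$, where we fix a representative $m = k - V$ with $V^A = 0$; such a representative always exists since any trivial summand of $V$ can be absorbed into the integer $k$. Both base cases ($r = 0$, or $d = 0$ for general $r$) amount to $m \in \mZ$, where $f(A,m)$ is a component of $f(A)_*$ and hence bijective by hypothesis.

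For the inductive step, suppose $d > 0$. Since $V$ has no trivial summand, some nontrivial character $\lambda \colon A \to C$ appears as a direct summand of $V$, so $V = \lambda \oplus V'$ with $\dim V' = d - 1$ and $m + \lambda = k - V' \in I_A$. Applying the defining exact sequence of Definition \ref{def:oriented R-algebra} with $m + \lambda$ in place of $m$, and using that $\res^A_K(\lambda) = 1$ because $\lambda$ is trivial on $K = \ker\lambda$, produces
\[ 0 \to X(A, m + \lambda) \xra{a_\lambda \cdot -} X(A, m) \xra{\res^A_K} X(K, \res^A_K(m)) \to 0, \]
together with the analogous sequence for $Y$. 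The morphism $f$ fits into a map between these short exact sequences: the left square commutes because $f$ is multiplicative and preserves the pre-Euler class (so $f(a_\lambda) = a_\lambda$), and the right square commutes by naturality of $f$ along the inclusion $K \hookrightarrow A$. By the inner inductive hypothesis, $f$ is bijective on $X(A, m + \lambda)$, and since $K$ has rank $r - 1$, the outer inductive hypothesis gives bijectivity on $X(K, \res^A_K(m))$. The five lemma then yields bijectivity of $f(A, m)$, completing the induction.

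The only delicate point is the setup: one should insist that $V$ have no trivial summand, so that a nontrivial summand $\lambda$ is available whenever $d > 0$; and one must pick $\lambda$ as a summand of $V$ precisely so that $m + \lambda$ remains in $I_A$ and genuinely reduces $\dim V$. With these choices in place, the argument is a routine five-lemma bookkeeping using only that $f$ commutes with multiplication by $a_\lambda$ and with restriction.
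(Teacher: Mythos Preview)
Your proof is correct and follows essentially the same approach as the paper: both arguments apply the five lemma to the defining exact sequence of an orientable $\elRO$-algebra to reduce the dimension of the representation. The only cosmetic difference is that you organize it as a double induction (outer on $\text{rank}(A)$, inner on $\dim V$), whereas the paper runs a single induction on $\dim W$ quantified over all groups simultaneously; in the paper's version the right-hand term $f(K,k-1-V|_K)$ falls under the same inductive hypothesis because $V|_K$ (after absorbing any trivial summand into the integer) still has dimension strictly less than $\dim W$.
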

\begin{proof}
  We show that for all elementary abelian 2-groups $A$, all $k\in\mZ$ and all $A$-representations
  $W$ with $W^A=0$, the map $f(A,k-W):X(A,k-W)\to Y(A,k-W)$ is an isomorphism.
  We argue by induction on the dimension of $W$.
  For $W=0$ the claim is true by hypothesis.
  If $W$ is nonzero, we choose a nontrivial $A$-character $\lambda$ such that
  $W=V\oplus\lambda$. Then the outer two maps in the following commutative diagram
  are isomorphisms by induction, because $V$ has smaller dimension than $W$:
  \[ \xymatrix{
      0 \ar[r] & X(A,k-V)\ar[d]_{f(A,k-V)} \ar[r]^-{a_\lambda\cdot-}&
       X(A,k -W)\ar[d]^{f(A,k-W)}\ar[r]^-{\res^A_K} & X(K,k- 1-V|_K) \ar[d]^{f(K,k-1- V|_K)} \ar[r]& 0 \\
       0 \ar[r] & Y(A,k-V) \ar[r]_-{a_\lambda\cdot-}&
       Y(A,k -W)\ar[r]_-{\res^A_K} & Y(K,k- 1-V|_K)  \ar[r]& 0 
     } \]
   Since both rows are exact, the middle map is an isomorphism.
\end{proof}

For an orientable $\elRO$-algebra $(X,a)$ we already employed the notation
$a_\lambda=\lambda^*(a)$  for $A$-characters $\lambda$.
Then $a_\lambda=0$ when $\lambda$ is the trivial character;
and multiplication by $a_\lambda$ is injective for all nontrivial characters $\lambda$.
If $V$ is an $A$-representation, we define
\[ a_V \ = \  \prod_{\lambda\in \Hom(A,C)} a_\lambda^{m_\lambda}\ \in \ X(A,-V)\ , \]
where $m_\lambda$ is the multiplicity of the character $\lambda$ in $V$.
Then $a_V=0$ whenever $V$ has nontrivial $A$-fixed points;
and multiplication by $a_V$ is injective whenever $V^A=0$.
These $a$-classes are multiplicative and natural for restriction homomorphisms, i.e.,
\[ a_V\cdot a_W\ = \ a_{V\oplus W}\text{\quad and\quad }\beta^*(a_V)\ = \ a_{\beta^*(V)}\]
for all homomorphisms $\beta:B\to A$ between elementary abelian 2-groups.\smallskip

In an orientable $\elRO$-algebra, divisibility by the $a$-classes of general representations
can be tested separately on the isotypical summands.
The next proposition makes this precise.

\begin{prop}\label{prop:coprime}
  Let $(X,a)$ be an orientable $\elRO$-algebra, and let $A$ be an elementary abelian 2-group.
  \begin{enumerate}[\em (i)]
  \item 
  Let $\lambda$ be a nontrivial $A$-character with kernel $K$.
  Let $x$ and $y$ be homogeneous elements of $X(A,\star)$.
  Suppose that $\res^A_K(y)$ is a non zero-divisor in $X(K,\star)$.
  Then $x$ is divisible by $a_\lambda^n$ if and only if $x y$ is divisible by $a_\lambda^n$.
\item
  Let $V$ be an $A$-representation with trivial $A$-fixed points.
  Let $z$ be an homogeneous element of  $X(A,\star)$ that is divisible
  by $a_\lambda^{m_\lambda}$ for every nontrivial character $\lambda$ of $A$,
  where $m_\lambda$ is the multiplicity of $\lambda$ in $V$.
  Then $z$ is divisible by $a_V$.
\end{enumerate}
\end{prop}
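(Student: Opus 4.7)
The plan is to establish part (i) by induction on $n$, and then to bootstrap part (ii) from (i) by induction on the number of distinct characters of $A$ appearing in $V$.

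For part (i), the forward implication is immediate, as $x = a_\lambda^n \cdot x'$ yields $xy = a_\lambda^n \cdot (x' y)$. For the reverse implication I would argue by induction on $n$; the case $n=1$ is the crucial one. Since $\lambda$ restricts to the trivial character on $K = \ker(\lambda)$, we have $\res^A_K(a_\lambda) = 0$, so if $xy$ is divisible by $a_\lambda$, applying $\res^A_K$ and using multiplicativity of restriction gives $\res^A_K(x) \cdot \res^A_K(y) = 0$ in $X(K,\star)$. The non zero-divisor hypothesis on $\res^A_K(y)$ forces $\res^A_K(x) = 0$, and exactness of the defining sequence of Definition \ref{def:oriented R-algebra} then produces $x'$ with $x = a_\lambda \cdot x'$. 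For $n \geq 2$, the $n=1$ case applied to $xy$ already yields $x = a_\lambda \cdot x'$; substituting into $xy = a_\lambda^n \cdot w$ and cancelling the injective factor $a_\lambda$ gives $x' y = a_\lambda^{n-1} \cdot w$, and the inductive hypothesis delivers the conclusion.

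For part (ii), write $V = \lambda_1^{m_1} \oplus \cdots \oplus \lambda_r^{m_r}$ with $\lambda_1, \ldots, \lambda_r$ pairwise distinct nontrivial characters of $A$, and induct on $r$. The case $r = 1$ is exactly the hypothesis. For the inductive step, I would factor off the last isotypical component using the divisibility assumption for $\lambda_r$, writing $z = a_{\lambda_r}^{m_r} \cdot z'$, and then show that $z'$ itself remains divisible by $a_{\lambda_i}^{m_i}$ for each $i < r$. This is exactly the situation covered by part (i), applied with $x = z'$, $y = a_{\lambda_r}^{m_r}$, and the character $\lambda_i$ in the role of $\lambda$. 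The required non zero-divisor condition on $\res^A_{K_i}(a_{\lambda_r}^{m_r}) = a_{\lambda_r|_{K_i}}^{m_r}$, where $K_i = \ker(\lambda_i)$, reduces to checking that $\lambda_r|_{K_i}$ is nontrivial; since the characters of $A$ trivial on $K_i$ are precisely $0$ and $\lambda_i$, and $\lambda_r \notin \{0,\lambda_i\}$, this holds, and the orientability axiom then yields the required injectivity. The inductive hypothesis applied to $z'$ and $V' = \lambda_1^{m_1} \oplus \cdots \oplus \lambda_{r-1}^{m_{r-1}}$ produces $z' = a_{V'} \cdot z''$, and hence $z = a_V \cdot z''$.

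The substantive content sits in the $n=1$ case of (i) and in the small representation-theoretic observation in (ii) that, for distinct nontrivial characters $\lambda_i$ and $\lambda_r$, the pre-Euler class $a_{\lambda_r}$ restricts to a non zero-divisor on $\ker(\lambda_i)$; that is what lets the induction in (ii) unwind one character at a time. Beyond that, the argument is a clean double induction driven purely by the exact sequence built into the definition, so I do not anticipate any serious obstacle.
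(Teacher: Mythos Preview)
Your proof is correct and follows essentially the same approach as the paper: both arguments establish (i) by induction on $n$ using the exact sequence to reduce to the restriction, and deduce (ii) by induction on the number of distinct isotypical summands, invoking (i) with the remaining $a$-classes in the role of $y$. The only cosmetic difference is the order in (ii): the paper first applies the inductive hypothesis to the smaller representation $U$ and then uses (i) once with $y=a_U$, whereas you first peel off $a_{\lambda_r}^{m_r}$ and then apply (i) once for each remaining $\lambda_i$ before invoking the inductive hypothesis.
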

\begin{proof}
  (i) If $x$ is divisible by $a_\lambda^n$, then clearly so is $x y$.
  We prove the other direction by induction on $n$.
  The statement is vacuous for $n=0$.
  Now let $n\geq 1$. By the induction hypothesis we know that $x$ is divisible by $a^{n-1}_\lambda$.
  Hence there exists an element $x'$ such that $x=x' a^{n-1}_\lambda$.
  Since $x' y a^{n-1}_\lambda=x y$, and $x' y$ is the unique element with this property,
  we know that $x' y$ must be divisible by $a_\lambda$.
  Hence the restriction $\res^A_K(x' y)=\res^A_K(x')\res^A_K(y)$ is trivial.
  Since $\res^A_K(y)$ is a non zero-divisor, this implies that $\res^A_K(x')=0$.
  Hence $x'$ is divisible by $a_\lambda$, and therefore $x$ is divisible by $a_\lambda^n$.
  This finishes the proof.

  (ii) 
  We argue by induction on the number of nontrivial isotypical summands of $V$.
  There is nothing to show if $V=0$.
  Otherwise we let $\chi$ be a nontrivial $A$-character that occurs in $V$.
  Then $V=\chi^{m_\chi}\oplus U$ for an $A$-representation $U$ with
  fewer nontrivial isotypical summands.
  We let $z\in X(A,m)$ be divisible by $a_\lambda^{m_\lambda}$ for all
  nontrivial $A$-characters $\lambda$.
  Then we know by induction that $z$ is divisible by $a_U$,
  say $z=x\cdot a_U$ for some $x\in X(A,m+U)$.
  We let $K$ denote the kernel of $\chi$. Since $\chi$ does not occur in $U$,
  the representation $U$ has trivial $K$-fixed points.
  So $\res^A_K(a_U)=a_{U|_K}$ is a non zero-divisor in $X(K,\star)$.
  Since $x\cdot a_U=z$ is divisible by $a_\chi^{m_\chi}$, part (i)
  shows that $x$ is divisible by $a_\chi^{m_\chi}$. So $z=x\cdot a_U$
  is divisible by $a_\chi^{m_\chi}\cdot a_U=a_V$.
\end{proof}

\begin{defn}[Oriented $\elRO$-algebras]
  An {\em inverse Thom class} of an orientable $\elRO$-algebra $(X,a)$
  is a class $t\in X(C,1-\sigma)$
  such that $\res^C_1(t)=1$, the multiplicative unit in $X(1,0)$.
  An {\em oriented $\elRO$-algebra} is a triple $(X,a,t)$ consisting of an
  orientable $\elRO$-algebra and an inverse Thom class.
\end{defn}

\begin{rk}[The choice of inverse Thom class]\label{rk:choice of inverse Thom class}
  If $(X,a)$ is an orientable $\elRO$-algebra, then in particular the restriction homomorphism
  \[ \res^C_1\ : \ X(C,1 -\sigma)\ \to \ X(1,0) \]
  is surjective. So an inverse Thom class exists.
  Moreover, any two inverse Thom classes differ by an element of the form $a\cdot y$,
  for a unique $y\in X(C,1)$; so the set of inverse Thom classes is a torsor over
  the additive group $X(C,1)$. In some important cases, the group $X(C,1)$ is trivial,
  so that there is a unique inverse Thom class.
  This holds for example for equivariant bordism, and for Bredon homology.
  
  The choice of inverse Thom class is essential,
  and the automorphism group of an orientable $\elRO$-algebra
  need not act transitively on the set of inverse Thom classes.
  An explicit example is the orientable $\elRO$-algebra $(H\sm H)^\sharp$
  arising from the `global dual Steenrod algebra',
  where $H=H\underline\mF_2$ is the global Eilenberg-MacLane spectrum
  of the constant Mackey functor.
  As we plan to discuss elsewhere, $(H\sm H)^\sharp$ has exactly four inverse Thom classes,
  falling into two distinct orbits under the automorphism group of $(H\sm H)^\sharp$.
\end{rk}

Similarly as for the pre-Euler class, we will use the notation
\[ t_\lambda\ = \ \lambda^*(t)\ \in \ X(A,1-\lambda)\]
for an inverse Thom class $t$ and an $A$-character $\lambda$.
The inverse Thom class of the trivial character is the multiplicative unit 1.
And we extend the inverse Thom classes to general $A$-representations by defining
\begin{equation}\label{eq:define_t_V}
  t_V \ = \  \prod_{\lambda\in\Hom(A,C)} t_\lambda^{m_\lambda}\ , 
\end{equation}
where $m_\lambda$ is the multiplicity of the character $\lambda$ in $V$.
These inverse Thom classes are also multiplicative and natural for restriction, i.e.,
\[ t_V\cdot t_W\ = \ t_{V\oplus W}\text{\quad and\quad }\beta^*(t_V)\ = \ t_{\beta^*(V)}\]
for all homomorphisms $\beta:B\to A$ between elementary abelian 2-groups.

\begin{prop}\label{prop:generation}
  Let $(X,a,t)$ be an oriented $\elRO$-algebra. 
  \begin{enumerate}[\em (i)]
  \item For every subgroup $B$ of $A$, and for all $m\in I_A$, the restriction homomorphism
    \[ \res^A_B\ : \ X(A,m)\ \to \ X(B,\res^A_K(m))  \]
    is surjective.
  \item The $I_A$-graded algebra $X(A,\star)$ is generated by its $\mZ$-graded subalgebra $X(A)_*$
  and the classes $a_\lambda$ and $t_\lambda$ for all nontrivial $A$-characters $\lambda$.
  \end{enumerate}
\end{prop}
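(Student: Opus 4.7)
The plan is to prove both parts by an outer induction on $|A|$; the base $|A|=1$ is trivial since then $I_A=\mZ$ and $X(A,\star)=X(A)_*$. The inductive step rests on a key lemma: assuming part (ii) for the index-$2$ subgroup $K = \ker\lambda$ (available from the outer inductive hypothesis, since $|K|<|A|$), every $z \in X(K, n)$ lifts to an element of $X(A, m)$ which lies in the subalgebra generated by $X(A)_*$ together with the classes $a_\chi, t_\chi$ for all nontrivial $A$-characters $\chi$, for any $m \in I_A$ with $\res^A_K(m)=n$. The key lemma directly gives part (i) for the inclusion $K \subseteq A$; for a general subgroup $B \subseteq A$, I would iterate along a chain $B = B_0 \subset B_1 \subset \cdots \subset B_n = A$ of index-$2$ inclusions, applying the lemma at the top step (using (ii) for $B_{n-1}$) and the outer inductive hypothesis at each lower step (where the ambient group is a proper subgroup of $A$).

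The ingredients for the key lemma are: (a) the projection $p: A \to K$ splitting the inclusion (which exists since $A$ is an $\mF_2$-vector space), giving a section $p^*: X(K,j) \to X(A,j)$ of $\res^A_K$ on integer gradings; (b) the compatibility $\res^A_K(a_\chi) = a_{\chi|_K}$ and $\res^A_K(t_\chi) = t_{\chi|_K}$, with $a_{\chi|_K}=0$ and $t_{\chi|_K}=1$ when $\chi = \lambda$; and (c) the fact that $t_\lambda$ shifts the grading by $1-\lambda$ while restricting to the unit, giving a handle for the $\lambda$-component which is invisible from $K$. Using (ii) for $K$, decompose $z = \sum_j y_j Q_j$ with $y_j \in X(K)_*$ and $Q_j = \prod_{\psi \neq 1} a_\psi^{b_\psi^{(j)}} t_\psi^{c_\psi^{(j)}}$ a monomial over nontrivial $K$-characters. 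Writing $m = k-V$ with $V = \sum_\chi u_\chi \chi$, each term lifts as $\tilde y_j := p^*(y_j)$ together with
\[ \tilde Q_j \ := \ t_\lambda^{u_\lambda} \prod_{\psi \neq 1} a_{\chi_1^\psi}^{B_1^\psi}\, a_{\chi_2^\psi}^{B_2^\psi}\, t_{\chi_1^\psi}^{C_1^\psi}\, t_{\chi_2^\psi}^{C_2^\psi},\]
where $\chi_1^\psi, \chi_2^\psi = \chi_1^\psi \cdot \lambda$ are the two $A$-lifts of $\psi$, and the nonnegative exponents $B_i^\psi, C_i^\psi$ satisfy $B_1+B_2=b_\psi^{(j)}$, $C_1+C_2=c_\psi^{(j)}$ (giving the correct restriction) and $B_i + C_i = u_{\chi_i^\psi}$ (giving the correct $A$-grading). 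A nonnegative integer solution exists because $u_{\chi_1^\psi} + u_{\chi_2^\psi}$ equals the $\psi$-multiplicity of $V|_K$, which is $b_\psi^{(j)} + c_\psi^{(j)}$.

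For part (ii), I induct on $\dim V$ where $m = k - V$ and $V$ has no trivial summands. The base $V=0$ is immediate since $X(A,k) \subseteq X(A)_*$. For the inductive step, pick a nontrivial character $\lambda$ appearing in $V$, set $K = \ker\lambda$, and apply the key lemma to $\res^A_K(x)$ to produce a lift $\tilde z \in X(A, m)$ already expressed in the generators. The difference $x - \tilde z$ restricts to zero, so by the orientability exact sequence $x - \tilde z = a_\lambda \cdot w$ for some $w \in X(A, k-V+\lambda)$; since $\dim(V-\lambda) < \dim V$, the inner inductive hypothesis expresses $w$ in the subalgebra, and hence also $x$.

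The main obstacle is the coordinated combinatorial construction in the key lemma: the exponents $B_i^\psi, C_i^\psi$ together with the power of $t_\lambda$ must be chosen consistently so that the lifted monomial has both the prescribed restriction to $X(K,\star)$ and the prescribed $A$-grading $m$. Feasibility of the resulting linear system reduces to the multiplicity identity $u_{\chi_1^\psi} + u_{\chi_2^\psi} = b_\psi^{(j)} + c_\psi^{(j)}$, which is elementary but must be verified uniformly across all nontrivial $K$-characters $\psi$ appearing in any given term.
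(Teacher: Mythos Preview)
Your proof is correct and follows essentially the same strategy as the paper's: a simultaneous induction on the rank of $A$, with an inner induction on $\dim V$ for part (ii), using the fundamental exact sequence to peel off one character at a time. The paper's presentation is somewhat cleaner in two places. First, rather than restricting to index-$2$ steps and iterating, the paper proves (i) for an arbitrary proper subgroup $B\subset A$ directly, using a retraction $r:A\to B$ together with a decomposition $W=\bar U\oplus\bar V$ satisfying $\bar U|_B\cong U$ and $\bar V|_B\cong V$; the existence of such a decomposition absorbs your contingency-table computation in one line. Second, in the inductive step for (ii) the paper does not insist that the lift lie in the subalgebra: it produces an arbitrary lift $z\in X(A,k-1-V)$ via part (i), so that $x+t_\lambda z=a_\lambda y$ with both $y$ and $z$ living in gradings of smaller dimension, and then applies the inner hypothesis to both. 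Your key lemma does more work up front (the lift is already in the subalgebra) in exchange for only invoking the inner hypothesis once; both variants are fine.
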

\begin{proof}
  We prove both statements together by induction over the rank of $A$.
  The induction starts when $A$ is the trivial group, in which case there is nothing to show.
  Now we let $A$ be a nontrivial elementary abelian 2-group, and we assume
  that (i) and (ii) hold for all proper subgroups of $A$.

  To prove (i) we may assume that $B$ is a proper subgroup of $A$.
  We write $m=k-W$ for an $A$-representation $W$ with $W^A=0$.
  By (ii) for $B$, it suffices to show that all classes of the form
  $x\cdot a_U\cdot t_V$ are in the image of the restriction homomorphism,
  whenever $U$ and $V$ are $B$-representations such that $U\oplus V=W|_B$,
  and $x\in X(B)_k$.
  Because $W$ is a sum of 1-dimensional $A$-representations,
  we may choose an $A$-equivariant decomposition $W=\bar U\oplus\bar V$
  such that  $\bar U|_B\iso U$ and $\bar V|_B\iso V$.
  We also choose a homomorphism $r:A\to B$ that is a retraction to the inclusion. Then
  \[ \res^A_B( r^*(x)\cdot a_{\bar U}\cdot t_{\bar V})\ =\ x\cdot a_U\cdot t_V\ , \]
  and we have shown (i) for $A$.

  For (ii) we also fix a representation grading $m=k-W$, with $W^A=0$.
  We prove the claim for all classes in $X(A,k-W)$ by induction over the dimension of $W$.
  There is nothing to show if $W=0$.
  Otherwise we write $W=V\oplus\lambda$
  for an $A$-representation $V$ and a nontrivial $A$-character $\lambda$, with kernel $K$.
  By part (i), the restriction map  $\res^A_K:X(A,k-1-V)\to X(K,k-1-V|_K)$
  is surjective. So given $x\in X(A,k-W)$,
  there is a class $z\in X(A,k-1-V)$ such that $\res^A_K(z)=\res^A_K(x)$. Then
  \[ \res^A_K(x + t_\lambda\cdot z)\ = \ \res^A_K(x) + \res^A_K(z)\ = \ 0 \ .\]
  The fundamental exactness property of an orientable $\elRO$-algebra
  provides a class $y\in X(A,k-V)$
  such that $a_\lambda\cdot y= x + t_\lambda\cdot z$.
  By the inductive hypothesis, the classes $y$ and $z$ are $X(A)_*$-linear combinations of
  products of $a$-classes and $t$-classes.
  Hence the same is true for $x=y\cdot a_\lambda+ z\cdot t_\lambda$,
  and we have shown (ii).
\end{proof}

\section{Oriented global ring spectra}

In this section we explain how orientable global ring spectra
give rise to orientable $\elRO$-algebras,
see Construction \ref{con:elalg} and Theorem \ref{thm:oriented_ring2oriented_el},
leading to a wealth of interesting examples.
For the purpose of this paper, a {\em global ring spectrum}
is a commutative monoid object in the global stable homotopy category of \cite[Section 4.4]{schwede:global},
under the globally derived smash product \cite[(4.2.25)]{schwede:global}.
So informally speaking, a global ring spectrum is a consistent collection
of $G$-equivariant homotopy ring spectra, for every compact Lie group $G$.

In this paper, we mostly care about the underlying $G$-spectra
of a global spectrum when $G$ is an elementary abelian 2-group.
So we could have worked in the $\el_2$-global stable homotopy category,
the Bousfield localization of the global stable homotopy category
that is designed so that it only `sees' the values of a global spectrum at the
global family $\el_2$ of elementary abelian 2-groups.
However, since every $\el_2$-global ring spectrum is underlying a global ring spectrum
(in many different ways),
working with global ring spectra is no loss of generality.
Moreover, some interesting results, specifically Proposition \ref{prop:units_to_1}
and Theorem \ref{thm:oriented consequences} below,
refer to general compact Lie groups.\smallskip

We recall that a {\em global functor} in the sense of \cite[Definition 4.2.2]{schwede:global}
is an additive functor from the global Burnside category of \cite[Construction 4.2.1]{schwede:global}
to the category of abelian groups. In more explicit terms, a global functor
specifies values on all compact Lie groups, restriction homomorphisms along continuous group
homomorphisms, and transfers along inclusions of closed subgroups; these data have to satisfy
a short list of explicit relations that can be found after Theorem 4.2.6 of \cite{schwede:global}.
The primary invariant of a global spectrum $E$ is the homotopy group global functor
$\upi_0(E)=\{\pi_0^G(E)\}_G$ that records the 0-th equivariant homotopy groups for all compact Lie groups,
along with the restriction and transfer maps that relate them,
see \cite[Example 4.2.3]{schwede:global}. One of the transfer homomorphisms plays
a particularly important role for this paper, namely the transfer
\[ \tr_1^C\ : \ \pi_0(E)\ \to \ \pi_0^C(E)\]
from the homotopy group of the underlying non-equivariant spectrum
to the $C$-equivariant homotopy group.

In the following proposition, $\mS$ denotes the global sphere spectrum,
so that $\upi_0(\mS)$ is the Burnside ring global functor,
compare \cite[Example 4.2.7]{schwede:global}.
The value of $\upi_0(\mS)$ at a compact Lie group $G$ is the $G$-equivariant stable
stem $\pi_0^G(\mS)=\colim [S^V,S^V]_*^G$, where the colimit is
over finite-dimensional $G$-subrepresentations of a complete $G$-universe.
When $G$ is finite, $\pi_0^G(\mS)$ is isomorphic to the Grothendieck ring
of isomorphism classes of finite $G$-sets.

For every closed subgroup $H$ of $G$, the ring homomorphism
\[ \Phi^H \ : \ \pi_0^G(\mS)\ \to \ \mZ \]
sends the class of a based continuous $G$-map $f:S^V\to S^V$ to the degree
of the restriction $f^H:S^{V^H}\to S^{V^H}$ to $H$-fixed points.
These homomorphisms are jointly injective, i.e., classes in $\pi_0^G(\mS)$
are detected by the collection of fixed point degrees, see for example \cite[Theorem 3.3.15]{schwede:global}.

\begin{prop}\label{prop:units_to_1}
  Let $\td{\tr_1^C}$ denote the global subfunctor of $\upi_0(\mS)$ generated by the transfer
  $\tr_1^C(1)$ in $\pi_0^C(\mS)$. Let $G$ be a compact Lie group.
  \begin{enumerate}[\em (i)]
  \item A class $x\in \pi_0^G(\mS)$
  belongs to $\td{\tr_1^C}(G)$ if and only if for every closed subgroup $H$ of $G$ with finite Weyl group,
  the integer $\Phi^H(x)$ is even.
\item For every unit $u\in \pi_0^G(\mS)$,  the class $u-1$ belongs to $\td{\tr_1^C}(G)$.
\end{enumerate}
\end{prop}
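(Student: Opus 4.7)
The strategy is to deduce (ii) from (i) in one line and focus the work on (i). For (ii), observe that each $\Phi^H\colon \pi_0^G(\mS)\to\mZ$ is a ring homomorphism, hence carries any unit of $\pi_0^G(\mS)$ to a unit of $\mZ$, i.e., to $\pm 1$. Thus $\Phi^H(u-1)\in\{0,-2\}$ is even for every closed $H\leq G$ with finite Weyl group, and part (i) yields $u-1\in \td{\tr_1^C}(G)$.

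For the forward inclusion of (i), I would begin by reducing to generators: by the double coset description of morphism groups in the global Burnside category, every element of $\td{\tr_1^C}(G)$ is an integer linear combination of classes of the form $\tr^G_L(\alpha^*(\tr_1^C(1)))$ with $L\leq G$ closed and $\alpha\colon L\to C$ a continuous homomorphism. A direct computation shows that $\alpha^*(\tr_1^C(1))$ equals $2\cdot 1$ when $\alpha$ is trivial (producing the generator $2\cdot[G/L]$) and equals $[L/K]=\tr^L_K(1)$ when $\alpha$ is surjective with kernel $K$ of index $2$ in $L$ (producing the generator $[G/K]$). Generators of the first type visibly have even fixed-point degrees, and for the second the key observation is that $L/K\cong C$ acts freely on $G/K$ on the right, commuting with the left $G$-action. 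Consequently, for any closed $H\leq G$ this restricts to a free $C$-action on the fixed set $(G/K)^H$, forcing $\chi((G/K)^H)=2\chi((G/K)^H/C)$ to be even.

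For the converse, I would argue by descending induction on conjugacy classes of closed subgroups of $G$ with finite Weyl group. Expand $x=\sum_{(H)}n_H\cdot[G/H]$ in the standard $\mZ$-basis of $\pi_0^G(\mS)$ and pick a maximal $(H)$ with $n_H\neq 0$; since $\Phi^H([G/H'])$ vanishes unless $H$ is subconjugate to $H'$ and equals $|W_GH|$ for $H'=H$, the even-fixed-point hypothesis forces $n_H\cdot|W_GH|=\Phi^H(x)$ to be even. If $|W_GH|$ is odd then $n_H$ is even and $n_H\cdot[G/H]$ lies in $2\pi_0^G(\mS)\subseteq\td{\tr_1^C}(G)$ via the trivial-$\alpha$ generator with $L=H$. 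If $|W_GH|$ is even then $W_GH$ contains an involution whose preimage $L$ in $N_GH$ is an index-$2$ overgroup of $H$, so $[G/H]=\tr^G_L(q^*(\tr_1^C(1)))\in\td{\tr_1^C}(G)$ for the quotient $q\colon L\to L/H\cong C$, and again $n_H\cdot[G/H]\in \td{\tr_1^C}(G)$. Subtracting the resulting element strictly shrinks the support of $x$, and the induction terminates.

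The main obstacle I foresee is establishing the normal form $\tr^G_L\circ\alpha^*$ that underpins the forward direction, together with the compact Lie bookkeeping: in that generality $|W_GH|$ must be replaced by the Euler characteristic of the Weyl group, and the descending induction has to be indexed by a suitable rank filtration on closed subgroups with finite Weyl group. The combinatorial heart of the argument, namely the free right $C$-action of $L/K$ on $G/K$, is short and self-contained once the normal form is in hand.
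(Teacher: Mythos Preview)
Your proof is correct and follows essentially the same route as the paper. The paper imports the description of generators of $\td{\tr_1^C}(G)$ (as $2\cdot\pi_0^G(\mS)$ together with $t_H^G$ for $H$ with finite Weyl group of even order) from \cite[Proposition~6.1.45]{schwede:global}, whereas you sketch this directly via the normal form $\tr^G_L\circ\alpha^*$; and the paper organizes the converse as a contradiction after reducing modulo $\td{\tr_1^C}(G)$, whereas you phrase it as a descending induction on the support---but these are the same argument. Your worry about replacing $|W_GH|$ by an Euler characteristic is unnecessary: only closed subgroups with \emph{finite} Weyl group index the basis of $\pi_0^G(\mS)$, so $|W_GH|$ is always the correct quantity, and since any $x$ has finite support the induction terminates without further filtration.
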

\begin{proof}
  (i) As we show in the proof of \cite[Proposition 6.1.45]{schwede:global},
  the value $\td{\tr_1^C}(G)$ is the subgroup of $\pi_0^G(\mS)$ generated by $2\cdot \pi_0^G(\mS)$
  and by the transfers $t_H^G=\tr_H^G(1_H)$ for all closed subgroups $H$ of $G$ whose Weyl group
  is finite and of even order.
  The degree homomorphism $\Phi^K:\pi_0^G(\mS)\to\mZ$ is additive, so it takes even values on
  all classes in $2\cdot \pi_0^G(\mS)$. If the Weyl group of $H$ in $G$ is finite of even order,
  then there is a closed subgroup $L$ of $G$ that contains $H$ as a normal subgroup of index 2.
  Then
  \[ \Phi^K(t_H^G)\ = \ \Phi^K(\res^G_K(\tr_L^G(\tr_H^L(1_H)))) \ .\]
  The Weyl group $W_G H$ acts freely from the right on the coset space $G/H$, and hence
  it acts freely and smoothly on the smooth closed manifold $(G/H)^K$.
  By \cite[Proposition 3.4.2 (ii)]{schwede:global}, the integer
  $\Phi^K(t_H^G)$ is divisible by the order of $W_G H$, and hence even.
  We have thus shown that every class in $\td{\tr_1^C}(G)$ has even fixed point degrees.

  For the converse we consider a class with even fixed point degrees. By the previous paragraph,
  any such class is congruent modulo $\td{\tr_1^C}(G)$ to a class of the form
  \[  t_{K_1}^G + \dots + t_{K_n}^G \]
  for pairwise non-conjugate closed subgroups $K_1,\dots,K_n$ of $G$ whose Weyl groups are finite and of odd order.
  If the above sum had at least one summand, we assume without loss of generality that
  $K_1$ is maximal with respect to subconjugacy among the groups that occur, i.e.,
  $K_1$ is not subconjugate to $K_i$ for $i=2,\dots,n$.
  Then for $i=2,\dots,n$, the fixed point set $(G/K_i)^{K_1}$ is empty, and thus $\Phi^{K_1}(t_{K_i}^G)=0$.
  Hence
  \[    \Phi^{K_1}(t_{K_1}^G + \dots + t_{K_n}^G )\ = \  \Phi^{K_1}(t_{K_1}^G)\ = \ |W_G K_1|\ .\]
  This contradicts the assumption that all fixed point degrees are even.
  So the above sum must be empty, i.e., the given class belongs to $\td{\tr_1^C}(G)$.
  
  (ii) The degree homomorphism $\Phi^H:\pi_0^G(\mS)\to\mZ$ is multiplicative, so it sends
  all units of $\pi_0^G(\mS)$ to $\{\pm 1\}$. Hence $\Phi^H(u-1)$ is even,
  and part (i) proves the claim.
\end{proof}

Now we introduce orientable global ring spectra, our main source of examples
for orientable $\elRO$-algebras.

\begin{defn}
  A global ring spectrum $E$ is {\em orientable} if the class
  $\tr_1^C(1)$ in the group $E_0^C=\pi_0^C(E)$ is trivial.
\end{defn}

Some important examples of orientable global ring spectra
are the mod 2 global Eilenberg-MacLane spectrum $H\underline\mF_2$,
see Example \ref{eg:HF}, 
and the global Thom spectrum $\bmO$ defined in \cite[Example 6.1.24]{schwede:global}.
These are specific global equivariant forms of the mod 2 Eilenberg-MacLane spectrum
and the classical real bordism spectrum.
The global ring spectrum  $H\underline\mF_2$ represents Bredon homology with constant coefficients,
and the global ring spectrum $\bmO$ realizes the orientable
$\elRO$-algebra $\cN$ of equivariant bordism.
We return to these examples in much detail in Sections \ref{sec:bredon} and \ref{sec:bordism},
respectively.
Further examples that we discuss below are the global Thom ring spectrum  
$\bMO$ defined in \cite[Example 6.1.7]{schwede:global}
that represents stable equivariant bordism, see Corollary \ref{cor:stable bordism}
and the remarks immediately thereafter;
and the global Borel ring spectrum associated to the mod 2 Eilenberg-MacLane spectrum,
see Theorem \ref{thm:Borel initial}.
For these four examples, we establish universal characterizations
of the associated oriented $\elRO$-algebra, see
Theorem \ref{thm:Bredon initial}, Theorem \ref{thm:main},
Corollary \ref{cor:stable bordism} and Theorem \ref{thm:Borel initial}, respectively.
The global smash product of an orientable global ring spectrum with any other global
ring spectrum is again orientable, so this yields many further examples.

The next theorem shows that orientability of a global ring spectrum 
has rather strong consequences for the homotopy group global functor.

\begin{theorem}\label{thm:oriented consequences}
  Let $E$ be an orientable global ring spectrum, and let $G$ be a compact Lie group.
  \begin{enumerate}[\em (i)]
  \item All $G$-equivariant homotopy groups of $E$ are $\mF_2$-vector spaces.
  \item The ring homomorphism
    $\eta_*:\pi_0^G(\mS)\to \pi_0^G(E)=E_0^G$ induced by the unit morphism $\eta:\mS\to E$
    sends all units of $\pi_0^G(\mS)$ to 1.
  \item For every $G$-representation $V$ 
    and every $G$-equivariant based homotopy self-equivalence $\psi:S^V\to S^V$,
    the automorphism $E_*^G(S^\psi)$ of $E_*^G(S^V)$ is the identity.
  \end{enumerate}
\end{theorem}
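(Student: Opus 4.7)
The plan is to deduce all three parts from a single observation about the unit map $\eta:\mS\to E$: since $\eta$ is a morphism of global spectra, the induced map $\eta_*:\upi_0(\mS)\to\upi_0(E)$ is a morphism of global functors, and the orientability hypothesis $\tr_1^C(1)=0$ in $\pi_0^C(E)$ says exactly that the generator of $\td{\tr_1^C}$ is annihilated. Since $\eta_*$ commutes with restrictions, transfers and the additive structure, this implies that the entire global subfunctor $\td{\tr_1^C}$ lies in the kernel of $\eta_*$. All three claims reduce to this fact via Proposition \ref{prop:units_to_1}.

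Granting this, part (i) is quick: the class $2\cdot 1\in\pi_0^G(\mS)$ has fixed point degree $2$ at every closed subgroup of $G$, so by Proposition \ref{prop:units_to_1}(i) it lies in $\td{\tr_1^C}(G)$, and therefore its image in $\pi_0^G(E)$ vanishes. Since $\pi_k^G(E)$ is a module over the ring $\pi_0^G(E)$, we have $2\cdot x=(2\cdot 1_E)\cdot x=0$ for every $x\in\pi_k^G(E)$, so every $G$-equivariant homotopy group of $E$ is an $\mF_2$-vector space. Part (ii) is equally direct: Proposition \ref{prop:units_to_1}(ii) says $u-1\in\td{\tr_1^C}(G)$ for every unit $u\in\pi_0^G(\mS)$, so $\eta_*(u)-1=\eta_*(u-1)=0$.

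For part (iii), the key auxiliary claim is that under the ring isomorphism $[S^V,S^V]^G\iso\pi_0^G(\mS)$ (obtained by desuspending and working in the equivariant stable homotopy category), the action of $\psi$ on $E^G_*(S^V)=\pi_*^G(S^V\sm E)$ by post-composition with $\psi\sm E$ agrees with multiplication by the image $\eta_*(u)\in\pi_0^G(E)$ of the corresponding unit $u\in\pi_0^G(\mS)^\times$. Once this identification is in place, part (ii) immediately forces $\eta_*(u)=1$, and hence $E^G_*(S^\psi)=\Id$. The main obstacle in the whole proof is this bookkeeping: one must check that post-composition by $\psi\sm E$ on $\pi_*^G(S^V\sm E)$ corresponds to multiplication by the unit of $\pi_0^G(E)$ represented by $\eta\circ\psi\circ(\text{desuspension})$. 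This is the standard behavior of self-equivalences of invertible objects in a symmetric monoidal stable homotopy category, but it is the place where one actually has to unwind the definitions; everything else reduces to formal manipulation of the global functor $\td{\tr_1^C}$.
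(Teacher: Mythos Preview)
Your proof is correct and follows essentially the same line as the paper's: kill $\td{\tr_1^C}$ under $\eta_*$, invoke Proposition~\ref{prop:units_to_1} for (ii), and reduce (iii) to (ii) via the identification of $E_*^G(S^\psi)$ with multiplication by $\eta_*[\psi]$. The only minor difference is in (i), where the paper bypasses Proposition~\ref{prop:units_to_1} and simply notes $2=\res^C_1(\tr_1^C(1))=0$ in $\pi_0(E)$, then uses that all equivariant homotopy groups are $\pi_0(E)$-modules via inflation; your route through the fixed-point-degree criterion is correct but slightly longer.
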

\begin{proof}
  (i) The vanishing of the class $\tr_1^C(1)$ yields
  \[ 2 \ = \ \res^C_1(\tr_1^C(1))\ = \ 0 \]
  in $\pi_0(E)$. Since all equivariant homotopy groups of $E$
  are modules over $\pi_0(E)$ via inflation, they are all $\mF_2$-vector spaces.

  (ii)
  As $G$ varies over all compact Lie groups, the homomorphisms $\eta_*:\pi_0^G(\mS)\to\pi_0^G(E)$
  form a morphism of global functors $\eta_*:\upi_0(\mS)\to\upi_0(E)$.
  Because $\tr_1^C(1)=0$ in $\pi_0^C(E)$, the global subfunctor $\td{\tr_1^C}$ of
  $\upi_0(\mS)$ maps to 0 under the morphism $\eta_*$.
  Hence by Proposition \ref{prop:units_to_1} (ii), the homomorphism  $\eta_*$
  sends all units in $\pi_0^G(\mS)$ to 1. 

  (iii) The automorphism $E_*^G(S^\psi)$ of $E_*^G(S^V)$ coincides with multiplication
  by the image of $[\psi]\in \pi_0^G(\mS)$ under the ring homomorphism
  $\eta_*:\pi_0^G(\mS)\to\pi_0^G(E)$.
  Since $\psi$ is a homotopy self-equivalence, the class $[\psi]$ is a unit in $\pi_0^G(\mS)$,
  so part (ii) proves the claim.
\end{proof}

\begin{construction}[The $\elRO$-algebra of an orientable global ring spectrum]\label{con:elalg}
  We let $E$ be an orientable global ring spectrum.
  We will now define an associated $\elRO$-algebra $E^\sharp$.
  We write any given $m\in I_A$ as $m=k-V$ for an integer $k$
  and an $A$-representation $V$ with trivial fixed points.
  Then $k$ is unique, and $V$ is unique up to isomorphism. We set
  \[ E^\sharp(A,m)\ = \ E^\sharp(A,k-V)\ = \   E_k^A(S^V)\ .\]
  A key point is that this assignment is independent up to preferred isomorphism
  of the choice of $V$. Indeed, an isomorphism of $A$-representations
  $\psi:V\to W$ induces an equivariant homeomorphism $S^\psi:S^V\to S^W$,
  and hence an isomorphism of equivariant homotopy groups
  \[ E_k^A(S^\psi) \ : \ E_k^A(S^V)\ \to\ E_k^A(S^W)\ .  \]
  Theorem \ref{thm:oriented consequences} (iii) guarantees that this isomorphism
  does not depend on the choice of $\psi$.

  The restriction map
  along a morphism $\alpha:(B,\alpha^*(m))\to (A,m)$ in $\elRO$ is defined as follows.
  We choose an $A$-representation $V$ and a $B$-representation $W$ without fixed points
  so that $m=k-V$ and $\alpha^*(m)=l-W$ for integers $k\geq l$.
  There is then an isomorphism of $A$-representations
  \[ \psi\ :\ \alpha^*(V)\ \xra{\iso} \ W\oplus\mR^{k-l}\ . \]
  We define
    \[ \alpha^* \ : \ E^\sharp(A,m)\ \to \  E^\sharp(B,\alpha^*(m)) \]
  as the composite
  \begin{align*}
    E_k^A(S^V)\
    &\xra{\alpha^*} \  E_k^B(S^{\alpha^*(V)}) \ \xra{E_k^B(S^{\psi})} \
     E_k^B(S^{W\oplus \mR^{k-l}})  \ \xla[\iso]{-\sm S^{k-l}} \  E_l^B(S^W) \ .
  \end{align*}
  The first map is the restriction homomorphism of equivariant homotopy groups
  \cite[Construction 3.1.15]{schwede:global}, and the
  final isomorphism pointing backwards is the suspension isomorphism.
  Theorem \ref{thm:oriented consequences} (iii) guarantees that $\alpha^*$ 
  does not depend on the choice of $\psi$.
  The multiplication pairings are defined as the composite
  \begin{align*}
    E^\sharp(A,m)\times E^\sharp(B,n)\
    &= \  E_k^A(S^V)\times E_l^B(S^W)\\
    &\xra{\ \sm \ } \ \pi_{k+l}^{A\times B}(E\sm S^V\sm E\sm S^W)\\
    &\xra{\ \mu_*\ } \
      \pi_{k+l}^{A\times B}(E\sm S^{V\oplus W})\  =\  E^\sharp(A\times B,m\oplus n)\ .
  \end{align*}
  Here we have exploited that $m+n=(k+l)-(V\oplus W)$ in $I_{A\times B}$.
\end{construction}

The fixed point inclusion $S^0\to S^\sigma$ defines an element $a\in \pi_0^C(\mS\sm S^\sigma)$
that is sometimes called the {\em (pre-)Euler class} of the sign representation;
we abuse notation and use the same symbol for the image
\[ a\in E_0^C(S^\sigma)\ = \ E^\sharp(C,-\sigma)  \]
under the unit morphism $\mS\to E$.

\begin{theorem}\label{thm:oriented_ring2oriented_el}
  For every orientable global ring spectrum $E$,
  the pair $(E^\sharp,a)$ is an orientable $\elRO$-algebra.
\end{theorem}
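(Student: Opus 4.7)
The plan splits naturally into two parts: verifying that $E^\sharp$ is an $\elRO$-algebra (a lax symmetric monoidal functor), and verifying the exactness axiom for the pre-Euler class $a$.

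First I would check that $E^\sharp$ is a lax symmetric monoidal functor. The well-definedness of the values $E^\sharp(A,m)$ independently of the representative $V$ with $m=k-V$ is built into Construction \ref{con:elalg}; the crucial ingredient is Theorem \ref{thm:oriented consequences} (iii), which ensures that all equivariant self-equivalences of representation spheres act as the identity on $E$-homotopy groups. The same input makes the restriction homomorphisms canonical, independent of any choice of trivialization of a fixed-point summand. Functoriality of restrictions, symmetry and associativity of the external product, and unit compatibility then follow routinely from the standard properties of equivariant homotopy groups of the smash product of a commutative global ring spectrum with representation spheres. The $\mF_2$-vector space structure on each value is immediate from Theorem \ref{thm:oriented consequences} (i).

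For the exactness axiom, fix $A$, a nontrivial character $\lambda\colon A\to C$ with kernel $K$, and write $m=k-V$ with $V^A=0$. The key input is the cofiber sequence of based $A$-spaces
\[
(A/K)_+\to S^0\xra{a_\lambda}S^\lambda,
\]
in which the first map is the augmentation and the second is the fixed-point inclusion. Smashing with $E\sm S^V$ and applying $\pi_k^A$ yields a long exact sequence
\[
\cdots\to E_k^K(S^{V|_K})\xra{\tr_K^A}E_k^A(S^V)\xra{a_\lambda\cdot}E_k^A(S^{V\oplus\lambda})\xra{\partial}E_{k-1}^K(S^{V|_K})\xra{\tr_K^A}\cdots,
\]
where the Wirthm\"uller isomorphism $\pi_k^A((A/K)_+\sm E\sm S^V)\iso E_k^K(S^{V|_K})$ identifies the source of the leftmost map. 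Under this identification, combined with the canonical trivialization of $\lambda|_K$ from Theorem \ref{thm:oriented consequences} (iii), the connecting homomorphism $\partial$ agrees with the restriction $\res^A_K$ defined in Construction \ref{con:elalg}.

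The hard part is then showing that both occurrences of $\tr_K^A$ in this long exact sequence vanish. The starting point is the base-change identity $\tr_K^A(1_\mS)=\lambda^*(\tr_1^C(1_\mS))$ in the Burnside ring $\pi_0^A(\mS)$: combined with the orientability hypothesis $\tr_1^C(1_E)=0$, it yields $\tr_K^A(1_E)=0$ in $\pi_0^A(E)$, and by Frobenius reciprocity $\tr_K^A$ annihilates the image of $\res^A_K$. Propagating this vanishing to all classes in $E_*^K(S^{V|_K})$ is the heart of the argument; I would accomplish it by first constructing an inverse Thom class $t\in E_1^C(S^\sigma)$—whose existence amounts precisely to lifting $1\in\pi_0(E)$ through the connecting map $\partial$ in the special case $(A,\lambda)=(C,\sigma)$, with obstruction $\tr_1^C(1_E)=0$—and then leveraging $t$ and its pullbacks $t_\lambda=\lambda^*(t)$, together with the Frobenius formula and the multiplicative structure on $E^\sharp$, to reduce an arbitrary class in $E_*^K(S^{V|_K})$ to one on which the transfer has already been shown to vanish. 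Once both transfers vanish, the long exact sequence collapses to the required short exact sequence.
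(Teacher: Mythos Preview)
Your proposal is essentially correct and follows the same approach as the paper: both use the cofiber sequence $(A/K)_+\to S^0\to S^\lambda$, identify the connecting map with restriction via Wirthm\"uller, construct an inverse Thom class $t$ from the vanishing of $\tr_1^C(1)$, and then use $t$ to break the long exact sequence into short exact sequences.

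The paper differs from your sketch mainly in how it organizes the ``propagation'' step that you correctly identify as the heart of the argument. Rather than phrasing it as showing the transfer vanishes, the paper proves three statements simultaneously by induction on the rank of $A$: (i) all restriction maps $\res^A_B:E_k^A(S^W)\to E_k^B(S^{W|_B})$ are surjective; (ii) the desired short exact sequence holds; (iii) $E_*^A(S^W)$ is generated as an $E_*^A$-module by products $a_U\cdot t_V$ with $U\oplus V=W$. The induction runs as follows: assuming (i)--(iii) for all proper subgroups, one first proves (i) for $A$ by using (iii) for $B$ to write any class in $E_*^B(S^{W|_B})$ as a polynomial in $a$- and $t$-classes, which lifts to $A$ because $a$- and $t$-classes and $E_*^B$-coefficients all lift (the latter via a splitting $r:A\to B$). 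Then (ii) follows immediately from (i) and the long exact sequence, and (iii) follows from (ii) by a further induction on $\dim W-\dim W^A$. Your Frobenius observation that $\tr_K^A$ vanishes on the image of $\res^A_K$ is equivalent to the paper's use of surjectivity of restriction; the paper's inductive structure is what makes your ``reduce an arbitrary class'' step precise, and you should spell it out rather than leave it as a gesture.
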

\begin{proof}
  We exploit that the sequence of equivariant homotopy groups 
  \[  E_1^C(S^\sigma)\ \xra{\res^C_e} \ E_1(S^1)\ \xra{\ \tr_1^C\ }\  E_1^C(S^1)  \]
  is exact.
  Because $E$ is orientable, we have $\tr^C_1(1\sm S^1)=\tr^C_1(1)\sm S^1=0$,
  and we can choose a class $t\in E_1^C(S^\sigma)$ that restricts to $1\sm S^1$.
  We use $t$ to define classes $t_V\in E_d^A(S^V)$ as in \eqref{eq:define_t_V},
  for all representations $V$ of elementary abelian 2-groups $A$, where $d=\dim(V)$.
  We claim that the following three properties hold for all $A$-representations $W$:
  \begin{enumerate}[(i)]
  \item For every subgroup $B$ of $A$, the restriction homomorphism
    \[ \res^A_B\ : \ E_k^A(S^W)\ \to \ E_k^B(S^{W|_B}) \]
    is surjective.
  \item 
    Suppose that $W=V\oplus\lambda$ for
    a nontrivial $A$-character $\lambda$ with kernel $K$.
    Then the following sequence is exact: 
    \[ 0 \ \to \ E_k^A(S^V)\ \xra{\ \cdot a_\lambda} \ E_k^A(S^{V\oplus\lambda})\
      \xra{\res^A_K} \ E_{k-1}^K(S^{V|_K})\ \to\ 0  \]
  \item The graded $\mF_2$-vector space $E_*^A(S^W)$ is generated
    as a graded $E_*^A$-module by the classes $a_U\cdot t_V$
    for all $A$-representations $U$ and $V$ such that $U\oplus V= W$.
  \end{enumerate}
  Part (ii) is the exactness property that shows that
  $(E^\sharp,a)$ is an orientable $\elRO$-algebra.
  
  The argument is similar as in the proof of Proposition \ref{prop:generation}:
  we prove all three statements together by induction over the rank of $A$.
  The induction starts when $A$ is the trivial group, in which case there is nothing to show.
  Now we let $A$ be a nontrivial elementary abelian 2-group, and we assume
  that parts (i)--(iii) hold for all proper subgroups of $A$.

  We start by proving (i), where we may assume that $B$ is a proper subgroup of $A$.
  By part (iii) for $B$, it suffices to show that all classes of the form
  $x\cdot a_U\cdot t_V$ are in the image of the restriction homomorphism,
  whenever $U$ and $V$ are $B$-representations such that $U\oplus V=W|_B$,
  and $x$ is a homogeneous element of $E_*^B$.
  Because $W$ is a sum of 1-dimensional $A$-representations,
  we may choose an $A$-equivariant decomposition $W=\bar U\oplus\bar V$
  such that  $\res^A_B(\bar U)\iso U$ and $\res^A_B(\bar V)\iso V$.
  We also choose a homomorphism $r:A\to B$ that is a retraction to the inclusion. Then
  \[ \res^A_B( r^*(x)\cdot a_{\bar U}\cdot t_{\bar V})\ =\ x\cdot a_U\cdot t_V\ , \]
  and we have shown (i) for $A$.
  To prove (ii) we smash the cofiber sequence of based $A$-spaces
  \[ A/K_+ \ \xra{\quad} \ S^0 \ \xra{\quad }\ S^\lambda \ \to\ A/K_+\sm S^1 \]
  with $S^V$ and apply $A$-equivariant $E$-homology, yielding a long exact sequence: 
  \begin{align*}
     \dots\  \to \ E_k^A(S^V) \
    \xra{\ \cdot a_\lambda}\ E_k^A(S^{V\oplus\lambda})\
     \xra{\ \partial\ }\ E_{k-1}^A(S^V\sm A/K_+)\ \to\  \dots  
  \end{align*}
  The Wirthm{\"u}ller isomorphism identifies the group
  $E_{k-1}^A(S^V\sm A/K_+)$ with $E_{k-1}^K(S^{V|_K})$.
  Under this identification, the boundary map $\partial$ becomes the
  restriction homomorphism $\res^A_K:E_k^A(S^{V\oplus\lambda})\to E_{k-1}^K(S^{V|_K})$,
  which is surjective by (i).
  So the long exact sequence decomposes into short exact sequences, showing (ii).

  We prove (iii) by induction on $\dim(W)-\dim(W^A)$.
  If $A$ acts trivially on $W$, the $E_*^A$-module $E_*^A(S^W)$
  is free of rank 1 with basis the class $t_W$, so the claim holds.
  If $W$ acts nontrivially on $W$, we write $W=V\oplus\lambda$
  for an $A$-representation $V$ and a nontrivial $A$-character $\lambda$, with kernel $K$.
  By part (i), the restriction map  $\res^A_K:E_{m-1}^A(S^V)\to E_{m-1}^K(S^{V|_K})$
  is surjective. So given $x\in E_k^A(S^W)$,
  there is a class $z\in E_{m-1}^A(S^V)$ such that $\res^A_K(z)=\res^A_K(x)$. Then
  \[ \res^A_K(x + z\cdot t_\lambda)\ = \ \res^A_K(x) + \res^A_K(z)\ = \ 0 \ .\]
  Part (ii) provides a class $y\in E_k^A(S^V)$
  such that  $y\cdot a_\lambda= x + z\cdot t_\lambda$.
  By the inductive hypothesis, the classes $y$ and $z$ are $E_*^A$-linear combinations of
  products of $a$-classes and $t$-classes.
  Hence the same is true for $x=y\cdot a_\lambda+ z\cdot t_\lambda$,
  and we have shown (iii). 
\end{proof}

\section{The universal property of Bredon homology} \label{sec:bredon}

In this section we discuss {\em Bredon $\elRO$-algebras},
i.e., orientable $\elRO$-algebras whose $\mZ$-graded parts consist only of a copy
of $\mF_2$ in dimension zero, see Definition \ref{def:Bredon}.
As the name suggests, Bredon homology of representation spheres with constant mod 2 coefficients
is an example of a Bredon $\elRO$-algebra.
Our main result, Theorem \ref{thm:Bredon initial}, exhibits a universal property:
every Bredon $\elRO$-algebra is initial among {\em additive} oriented $\elRO$-algebras.
It follows that Bredon $\elRO$-algebras are unique up to unique isomorphism,
a fact that is not entirely obvious, at least to the authors, from the definition.

\begin{defn}\label{def:Bredon}
  A {\em Bredon $\elRO$-algebra} is an orientable $\elRO$-algebra $H$
  such that for every elementary abelian 2-group $A$ and all $k\in\mZ$ we have
  \[ H(A)_k\ = \ H(A,k)\ = \
    \begin{cases}
      \mF_2 & \text{ for $k=0$, and}\\
      \ 0 & \text{ for $k\ne0$.}
    \end{cases}
  \]
\end{defn}

As the following example shows, Bredon $\elRO$-algebras exist.
Theorem \ref{thm:Bredon initial} below proves that
Bredon $\elRO$-algebras are unique up to unique isomorphism.

For every $\elRO$-algebra $X$, any two choices of pre-Euler classes
differ by multiplication with a unit in $X(C,0)$.
Furthermore, as we discussed in Remark \ref{rk:choice of inverse Thom class},
the difference between two choices of inverse Thom classes is an element of the form $a\cdot y$,
for $y\in X(C,1)$.
If $H$ is a Bredon $\elRO$-algebra, then $H(C,0)^\times=\mF_2^\times=\{1\}$ and $H(C,1)=0$.
Therefore, the pre-Euler class and the inverse Thom class of a Bredon $\elRO$-algebra are unique.

\begin{eg}[The global Eilenberg-MacLane spectrum]\label{eg:HF}
  We let $H\underline\mF_2$ denote the classical commutative orthogonal ring spectrum
  model for the Eilenberg-MacLane spectrum for the field $\mF_2$,
  as defined for example in \cite[Construction 5.3.8]{schwede:global}.
  By \cite[Proposition 5.3.9]{schwede:global}, its restriction to finite groups is
  an Eilenberg-MacLane ring spectrum for the constant global functor
  with value $\mF_2$.
  So $H\underline\mF_2$ represents Bredon homology and cohomology with coefficients
  in the constant Mackey functor $\mF_2$.
  
  Because the transfer $\tr_1^C(1)$ is trivial in
  Bredon homology with constant mod 2 coefficients,
  $H\underline\mF_2$ is an orientable global ring spectrum.
  By Theorem \ref{thm:oriented_ring2oriented_el}, the pair
  $((H\underline\mF_2)^\sharp,a)$ is an orientable $\elRO$-algebra, and hence a
  Bredon $\elRO$-algebra in the sense of Definition \ref{def:Bredon}.
\end{eg}

\begin{defn}[Additive oriented $\elRO$-algebras]\label{def:additive}
  An oriented $\elRO$-algebra $(X,a,t)$ is {\em additive} if the class  
  \[ a_1 t_\mu t_2\ +\ t_1 a_\mu t_2\ +\ t_1 t_\mu a_2 \]
  in $X(C\times C,2-(p_1\oplus\mu\oplus p_2))$ is trivial. Here $p_1,p_2,\mu:C\times C\to C$
  are the three nontrivial characters of the group $C\times C$,
  and we abbreviate $a_{p_i}$ to $a_i$, and $t_{p_i}$ to $t_i$.
\end{defn}

The term `additive' in the previous definition is motivated by the fact
that the 2-torsion formal group law associated to an additive $\elRO$-algebra 
is the additive formal group law, see Example \ref{eg:additive is additive} below.

\begin{eg}[Bredon $\elRO$-algebras are additive]
  We let $(X,a,t)$ be an oriented $\elRO$-algebra.
  The class  $ a_1 t_\mu t_2 + t_1 a_\mu t_2 + t_1 t_\mu a_2$
  in $X(C\times C,2-(p_1\oplus\mu\oplus p_2))$
  is in the kernel of all three restriction maps from $C\times C$ to $C$.
  By the fundamental exactness property,
  this class is divisible by each of  $a_1$, $a_\mu$ and $a_2$.
  Proposition \ref{prop:coprime} (ii) then shows that the class
  is divisible by the product $a_1 a_\mu a_2$.
  So there is a class $z\in X(C\times C)_2$  such that
  \[  z\cdot a_1 a_\mu a_2 \ =  \ a_1 t_\mu t_2\ +\ t_1 a_\mu t_2\ +\ t_1 t_\mu a_2 \ .     \]
  If the group $X(C\times C)_2$ is trivial,
  then the class $z$ necessarily vanishes, and so $(X,a,t)$ is additive. 
  This is the case in particular for all Bredon $\elRO$-algebras.
\end{eg}

The following main result of this section is the fact that
every Bredon $\elRO$-algebra is initial among additive $\elRO$-algebras.
In the proof we will start using the notation
\[ A^\circ \ = \ \{ \lambda\colon A\to C\ |\ \text{$\lambda$ is surjective} \} \]
for the set of nontrivial characters of an elementary abelian 2-group $A$.

\begin{theorem}[Universal property of Bredon homology]\label{thm:Bredon initial}
  Let $H$ be a Bredon $\elRO$-algebra, and let $X$ be an orientable $\elRO$-algebra.
  \begin{enumerate}[\em (i)]
  \item  Evaluation at the unique inverse Thom class of $H$ 
    is a bijection between the set of morphisms of orientable $\elRO$-algebras
    from $H$ to $X$, and the set of those inverse Thom classes $t$ of $X$ such that
    $(X,a,t)$ is additive.
  \item Any two Bredon $\elRO$-algebras are uniquely isomorphic.
  \end{enumerate}
\end{theorem}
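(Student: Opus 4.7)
First, I would derive part (ii) from part (i). The Example preceding the theorem shows that every Bredon $\elRO$-algebra $H$ is automatically additive with respect to its unique inverse Thom class, since $H(C \times C)_2 = 0$. Granting (i), I would apply it in both directions to two Bredon $\elRO$-algebras $H$ and $H'$ (each with its unique pre-Euler class and inverse Thom class) to obtain morphisms $H \to H'$ and $H' \to H$, then invoke Proposition \ref{prop:Z-graded iso suffices} to promote these to isomorphisms (the integer-graded parts on both sides are just $\mF_2$ concentrated in degree zero, so the maps there are automatically bijective), and conclude from the uniqueness clause of (i) that the compositions are the identity endomorphisms.

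For (i), the easy direction is that evaluation at $t^H$ is well-defined and injective. For well-definedness, the identity $\res^C_1(f(t^H)) = f(\res^C_1(t^H)) = f(1) = 1$ shows that $f(t^H)$ is an inverse Thom class of $X$, and the additivity relation is transported from $H$ to $X$ by the ring-homomorphism property of $f$ combined with the equality $f(a^H) = a^X$. Injectivity is immediate from Proposition \ref{prop:generation}(ii): as an $I_A$-graded algebra, $H(A, \star)$ is generated by its integer-graded subring (which is just $\mF_2$ in degree zero) together with the classes $a_\lambda$ and $t_\lambda = \lambda^*(t^H)$, so the values of $f$ on these generators are forced by unit preservation, by the morphism condition $f(a^H) = a^X$, and by the chosen value $f(t^H)$ respectively.

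The main work lies in constructing a morphism $f \colon H \to X$ from a given additive inverse Thom class $t \in X(C, 1-\sigma)$. My plan is to define $f(A,m)$ by induction on the pair (rank of $A$, dimension of the nontrivial part of $m$), where $m = k - W$ with $W^A = 0$. The base cases $W = 0$ are forced: $1 \in H(A, 0) = \mF_2$ must go to $1 \in X(A, 0)$, and $H(A, k) = 0$ for $k \neq 0$. For the inductive step, choose a nontrivial character $\lambda$ with kernel $K$ and decomposition $W = V \oplus \lambda$, and embed $f$ into the morphism of fundamental short exact sequences from $H$ to $X$; the outer vertical maps are provided by the inductive hypothesis, and the task is to build a middle vertical map that makes the diagram commute and is compatible with the multiplicative structure and restriction.

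The hard part will be showing that this lift is consistent, namely that it is independent of the choices of $\lambda$ and of the decomposition of $W$, multiplicative, and natural under restriction homomorphisms. Equivalently, I must verify that every $\mF_2$-linear relation holding in $H(A, \star)$ among monomials in $\{a_\lambda, t_\lambda\}_{\lambda \in A^\circ}$ is already a formal consequence of the oriented $\elRO$-algebra axioms (fundamental exactness, multiplicativity, and restriction functoriality) together with the single additivity relation and its pullbacks. The structural axioms handle relations internal to a single character, while the additivity assumption supplies the crucial mixing identity among distinct characters through products and restrictions from $C \times C$. I expect this verification to rely either on the explicit presentation of $H(A, \star)$ developed in the companion paper \cite{hausmann-schwede:Bredon}, or on a careful internal induction that reduces multi-character relations to the $C \times C$ case where additivity applies directly; this is the step I anticipate to be the principal technical obstacle.
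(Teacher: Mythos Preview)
Your proposal is essentially correct and aligns with the paper's approach, but there is one logical ordering issue and one concrete step the paper carries out that you only gesture at.

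\textbf{Logical ordering.} You propose to prove (i) for an arbitrary Bredon $\elRO$-algebra $H$ and then deduce (ii). The paper instead proves existence in (i) only for the specific example $H^\sharp=(H\underline\mF_2)^\sharp$, because the explicit presentation from \cite{hausmann-schwede:Bredon} is stated for that object. Part (ii) is then deduced by applying (i) with target an arbitrary Bredon $H$ to get $H^\sharp\to H$, and Proposition~\ref{prop:Z-graded iso suffices} upgrades this to an isomorphism; (i) for general $H$ follows. Your ordering would require either the presentation for arbitrary $H$ (not available a priori) or your alternative ``internal induction'', which you do not carry out.

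\textbf{The missing technical core.} The paper does not build $f$ by your proposed double induction on $(\text{rank }A,\dim W)$. Instead it uses the presentation of $H^\sharp(A,\star)$ as the quotient of $\mF_2[a_\lambda,t_\lambda:\lambda\in A^\circ]$ by the relations
\[
r(T)\ =\ \sum_{\lambda\in T} a_\lambda\cdot\prod_{\mu\in T\setminus\{\lambda\}} t_\mu
\]
for minimally dependent subsets $T\subset A^\circ$. The entire existence proof then reduces to showing $r(T)=0$ in $X$ for every $T$ whose elements multiply to the trivial character; this is done by induction on $|T|$, with the base case $|T|=3$ being precisely the additivity hypothesis, and the inductive step splitting off two elements $\alpha,\beta$ and using $\gamma=\alpha\beta$ together with regularity of $a_\gamma$. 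This induction is the ``reduction of multi-character relations to the $C\times C$ case'' you anticipated; it is short but not entirely obvious, and is the part you would need to supply.
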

\begin{proof}
  (i) The unique orientation and inverse Thom class make
  $(H,a,t)$ additive, so every morphism of orientable $\elRO$-algebras sends
$t$ to an inverse Thom class of an additive orientation.
  
  Now we prove the uniqueness.
  Proposition \ref{prop:generation} (ii)
  shows that $H(A,\star)$ is generated as an $I_A$-graded $\mF_2$-algebra
  by the classes $a_\lambda$ and $t_\lambda$ for all nontrivial $A$-characters,
  where $t$ is the unique inverse Thom class;
  so every morphism $f:H\to X$ of oriented $\elRO$-algebras
  is determined by its effect on the pre-Euler class $a$ and the inverse Thom class $t$.

  Now we show the existence of the morphism
  for the specific example $H^\sharp=(H\underline\mF_2)^\sharp$,
  the Bredon $\elRO$-algebra given by the Bredon homology of representation spheres,
  see Example \ref{eg:HF}.
  We will exploit the presentation of the rings $H^\sharp(A,\star)$  obtained in
  \cite[Theorem 2.5]{hausmann-schwede:Bredon}.
  We let $(\bar a,\bar t)$ be an additive orientation of $X$.
  We let $T\subset A^\circ$ be any set of nontrivial $A$-characters whose product
  $\prod_{\lambda\in T}\lambda$ is the trivial character.
  We claim that the class
  \[   r(T)\ = \ 
    \sum_{\lambda\in T} \bar a_{\lambda}\cdot  \big( \prod_{\mu\in T\setminus\{\lambda\}} \bar t_{\mu}\big) \]
  in $X(A,|T|-1-\bigoplus_{\lambda\in T}\lambda)$ equals $0$.
  We prove this by induction over the cardinality of $T$.
  A non-empty subset of $A^\circ$ whose elements multiply to 1 must have at least 3 elements,
  so we start the induction for $T=\{\alpha,\beta,\gamma\}$ with $\alpha\cdot\beta\cdot\gamma=1$.
  Then
  \begin{align*}
    r(\{\alpha,\beta,\gamma\})\
    & = \ \bar a_\alpha \bar t_\beta \bar t_\gamma + \bar t_\alpha \bar a_\beta \bar t_\gamma + \bar t_\alpha \bar t_\beta \bar a_\gamma\\
    &= \ (\alpha,\gamma)^*(\bar a_1 \bar t_\mu \bar t_2 + \bar t_1 \bar a_\mu \bar t_2 + \bar t_1 \bar t_\mu \bar a_2)\ = \ 0
  \end{align*}
  by the additivity hypothesis, where $(\alpha,\gamma)\colon A\to C\times C$.
  
  Now we suppose that the set $T$ has at least 4 elements.
  We pick two distinct elements $\alpha\ne \beta$ from $T$,
  and we set $\gamma=\alpha\cdot\beta$.
  If $\gamma\in T$, then the sets $T\setminus\{\alpha,\beta,\gamma\}$
  and $\{\alpha,\beta,\gamma\}$ both have fewer elements than $T$,
  and for both the product of their elements is the trivial character.
  So $r(T\setminus\{\alpha,\beta,\gamma\})=0$ and $r(\{\alpha,\beta,\gamma\})=0$ by induction.
  Thus
  \[  r(T)\ = \
    \bar t_\alpha \bar t_\beta\bar t_\gamma\cdot r(T\setminus\{\alpha,\beta,\gamma\})\ + \
    r(\{\alpha,\beta,\gamma\})\cdot \! \prod_{\mu\in T\setminus\{\alpha,\beta,\gamma\}} \bar t_{\mu} \ = \ 0\ .\]
  If $\gamma\not\in T$, then the sets $(T\setminus\{\alpha,\beta\})\cup\{\gamma\}$
  and $\{\alpha,\beta,\gamma\}$ both have fewer elements than $T$,
  and for both the product of their elements is the trivial character.
  So $r((T\setminus\{\alpha,\beta\})\cup\{\gamma\})=0$
  and $r(\{\alpha,\beta,\gamma\})=0$ by induction.
  Thus
  \[ \bar a_{\gamma}\cdot r(T)\ = \
    (\bar a_\alpha \bar t_\beta+\bar t_\alpha \bar a_\beta)\cdot r((T\setminus\{\alpha,\beta\})\cup\{\gamma\})\ + \
    r(\{\alpha,\beta,\gamma\})\cdot r(T\setminus\{\alpha,\beta\})\ = \ 0\ . \]
  Since multiplication by $\bar a_\gamma$ is injective,
  we conclude that $r(T)=0$. This completes the inductive step, and hence the proof of the claim.

  By \cite[Theorem 2.5]{hausmann-schwede:Bredon},
  the $I_A$-graded ring $H^\sharp(A,\star)$
  is generated by the classes $a_\lambda$ and $t_\lambda$
  for all nontrivial $A$-characters $\lambda$,
  and the ideal of relations between these classes is generated by the polynomials $r(T)$
  as $T$ runs through all minimally dependent subsets of $A^\circ$.
  Minimally dependent sets of nontrivial characters
  in particular have the property that their elements multiply to the trivial character.
  By the previous claim, all generating relations map to 0 in $X(A,\star)$.
  So there is a unique morphism of $I_A$-graded $\mF_2$-algebras
  \[ f(A,\star)\ : \ H^\sharp(A,\star)\ \to \ X(A,\star)\]
  such that $f(A,-\lambda)(a_\lambda)=\bar a_\lambda$ and $f(A,1-\lambda)(t_\lambda)=\bar t_\lambda$
  for all $\lambda\in A^\circ$.
  For varying $A$, these homomorphisms are compatible with restriction
  along group homomorphisms, simply because this holds for $a$- and $t$-classes
  and these generate $H^\sharp(A,\star)$ as an $\mF_2$-algebra.
  We have thus constructed the desired morphism $f:H^\sharp\to X$.
  
  (ii) We let $H$ be any Bredon $\elRO$-algebra. Then $H$ has a unique orientation,
  which is moreover additive. So part (i) provides a unique morphism
  of orientable $\elRO$-algebras $f\colon H^\sharp\to H$ that matches the orientations.
  This morphism is necessarily an isomorphism in integer degrees,
  and hence an isomorphism of orientable $\elRO$-algebras,
 see Proposition \ref{prop:Z-graded iso suffices}.
\end{proof}

\section{The associated 2-torsion formal group law}

As we shall now explain, the underlying $\mF_2$-algebra of every oriented $\elRO$-algebra
 $(X,a,t)$
naturally comes with a 2-torsion formal group law, obtained roughly speaking by
expanding the Euler class $e_\mu$ of the multiplication $\mu:C\times C\to C$ in terms of
the Euler classes of the two projections.
As we will see later, a lot of the structure of $(X,a,t)$ can be recovered
from this underlying 2-torsion formal group law
and the geometric fixed point rings of $X$.

\begin{construction}[Localization away from $t$]\label{con:t-localization}
  We define the localization away from $t$ of an oriented $\elRO$-algebra $(X,a,t)$
  by inverting all inverse Thom classes.
  For every elementary abelian 2-group $A$, we let $(t^{-1}X)(A)$
  be the integer graded part of the localization
  of the $I_A$-graded ring $X(A,\star)$ obtained by inverting all classes
  of the form $t_\lambda$ for all nontrivial $A$-characters $\lambda$.
  Homogeneous elements of degree $k$ of $(t^{-1} X)(A)$ are thus fractions
  of the form $x/t_V$ for some $A$-representation $V$, and some
  $x\in X(A,k+|V|-V)$.
  These fractions then satisfy
  \[ x/t_V \ = \  (x\cdot t_W)/t_{V\oplus W} \text{\qquad and\qquad}
    x/t_V \cdot y/t_W \ = \ (x\cdot y)/t_{V\oplus W}\ . \]
  The localizations away from $t$ come with restriction homomorphisms 
  $\alpha^*:(t^{-1}X)(A)\to (t^{-1}X)(B)$ that are contravariantly functorial
  for group homomorphisms $\alpha:B\to A$, and defined by the formula
  \[  \alpha^*(x/t_V) \ = \ \alpha^*(x)/t_{\alpha^*(V)}\ .     \]
\end{construction}

\begin{defn}[Euler classes]
  Let $(X,a,t)$ be an oriented $\elRO$-algebra. The {\em Euler class} of 
  a nontrivial character $\lambda$ of an elementary abelian 2-group $A$
  is the class
    \[ e_\lambda\ =\ a_\lambda/t_\lambda\ = \ \lambda^*(a/t) \ \in \ (t^{-1}X)(A)_{-1} \]
    of degree $-1$ in the localization away from $t$ of $X$.
\end{defn}

For $i=1,\dots, n$, we let $p_i:C^n\to C$ be the projection to the $i$-th factor.
We write
\[ e_i\ = \ e_{p_i}\ = \ p_i^*(a/t)\ \in \ (t^{-1}X)(C^n)_{-1} \]
for the Euler class of $p_i$.

\begin{prop} 
  Let $(X,a,t)$ be an oriented $\elRO$-algebra and $n\geq 0$.
  The Euler classes $e_1,\dots,e_n$ form a regular sequence
  in the graded ring $(t^{-1}X)(C^n)$
  that generates the kernel of the restriction homomorphism
  \[ \res^{C^n}_1\ : \ (t^{-1}X)(C^n)\ \to \ X(1)\ . \]
\end{prop}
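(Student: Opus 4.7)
My plan is by induction on $n$; the case $n = 0$ is vacuous, since there are no Euler classes and the map $(t^{-1}X)(1) = X(1) \to X(1)$ is the identity. For the inductive step, let $K = \ker(p_1) \subseteq C^n$, identified with $C^{n-1}$ via the last $n-1$ coordinate projections. Under this identification $p_1|_K$ is trivial while $p_i|_K = p_{i-1}$ for $i > 1$, so $\res^{C^n}_K(t_1) = 1$ and $\res^{C^n}_K(a_1) = 0$, while $\res^{C^n}_K(t_i) = t_{i-1}$ and $\res^{C^n}_K(a_i) = a_{i-1}$ for $i > 1$. Writing $R_n = (t^{-1}X)(C^n)$, restriction thus induces a graded ring map $\res^{C^n}_K \colon R_n \to R_{n-1}$ that kills $e_1$ and sends $e_i$ to $e_{i-1}$ for $i > 1$.

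\textbf{Step 1: $e_1$ is a non zero-divisor on $R_n$.} The orientability axiom for $\lambda = p_1$ gives that multiplication by $a_1$ is injective on each graded piece of $X(C^n, \star)$. Because the $t_\mu$'s commute with $a_1$, this injectivity survives localization: if $s \cdot a_1 \cdot x = 0$ for some element $s$ in the multiplicative set $S$ used to form $R_n$, then $a_1 \cdot (sx) = 0$ forces $sx = 0$, hence $x$ vanishes in $R_n$. Since $t_1$ is a unit in $R_n$, multiplication by $e_1 = a_1/t_1$ is injective as well.

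\textbf{Step 2: $\res^{C^n}_K$ induces an isomorphism $R_n/(e_1) \iso R_{n-1}$.} The key input is the exact sequence of Definition~\ref{def:oriented R-algebra} for $\lambda = p_1$,
\[ 0 \to X(C^n, m) \xra{a_1 \cdot -} X(C^n, m - p_1) \xra{\res^{C^n}_K} X(K, \res^{C^n}_K(m) - 1) \to 0, \]
valid for all $m \in I_{C^n}$. Because every nontrivial character of $K$ extends to a nontrivial character of $C^n$, the restriction sends $S$ onto a cofinal subset of the analogous multiplicative set on $K$, so the exactness is preserved upon inverting these classes and taking integer-graded parts. Concretely, surjectivity of $\res^{C^n}_K \colon R_n \twoheadrightarrow R_{n-1}$ follows by extending any $K$-representation $W$ to a $C^n$-representation $\tilde W$ with $\tilde W|_K = W$ and lifting numerators through the exact sequence; and for the kernel, if $x/t_V \in R_n$ restricts to $0$, then some $t_{\tilde U} \in S$ annihilates $\res^{C^n}_K(x)$, so exactness yields $t_{\tilde U} x = a_1 z$ in $X(C^n, \star)$, whence $x/t_V = e_1 \cdot (t_1 z)/(t_{\tilde U} t_V) \in (e_1)$.

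\textbf{Step 3: Conclusion.} Under the isomorphism $R_n/(e_1) \iso R_{n-1}$ from Step 2 the classes $e_2, \dots, e_n$ map exactly to $e_1, \dots, e_{n-1}$, so the induction hypothesis implies they form a regular sequence in $R_n/(e_1)$ whose generated ideal is the kernel of $R_{n-1} \to X(1)$. Together with Step 1, this shows that $e_1, \dots, e_n$ is a regular sequence in $R_n$. Since $\res^{C^n}_1 = \res^K_1 \circ \res^{C^n}_K$, any $x \in \ker \res^{C^n}_1$ has $\res^{C^n}_K(x) \in (e_1, \dots, e_{n-1}) \subset R_{n-1}$ by induction; lifting this expression to $R_n$ via the surjection of Step 2 and subtracting yields an element of $(e_1)$, so $x \in (e_1, \dots, e_n)$. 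The main technical obstacle lies in Step 2: one must carefully translate the graded exactness statement on $X(C^n, \star)$ into a statement about $t$-inverted integer-graded rings, with care in lifting representations and inverse Thom classes from $K$ back to $C^n$.
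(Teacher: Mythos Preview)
Your proof is correct and follows essentially the same approach as the paper: induction on $n$, with the inductive step obtained by localizing the fundamental exact sequence for $\lambda = p_1$ to get a short exact sequence $0 \to (t^{-1}X)(C^n)_{*+1} \xrightarrow{e_1\cdot-} (t^{-1}X)(C^n)_* \xrightarrow{\res} (t^{-1}X)(C^{n-1})_* \to 0$, then invoking the inductive hypothesis. The paper compresses your Steps~1 and~2 into the single phrase ``because localization is exact,'' reading off both the regularity of $e_1$ and the identification $R_n/(e_1) \cong R_{n-1}$ directly from the resulting short exact sequence, whereas you argue each piece separately; but the content is the same.
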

\begin{proof}
  We argue by induction over $n$, beginning with $n=0$ where there is nothing to show.
  Now we assume $n\geq 1$. Because localization is exact, the sequence
  \[ 0 \ \to \ (t^{-1} X)(C^n)_{*+1}\  \xra{e_1\cdot -} \
    (t^{-1} X)(C^n)_*\  \xra{\ j^*\ }\   (t^{-1}X)(C^{n-1})_*\ \to \ 0  \]
  is exact, where $j=(1,-):C^{n-1}\to C^n$ is the embedding as the last $n-1$ factors.
  So $e_1$ is a non zero-divisor, and the restriction homomorphism $j^*$
  identifies $(t^{-1} X)(C^n)/(e_1)$ with $(t^{-1}X)(C^{n-1})$.
  By induction hypothesis, the images of the classes $e_1,\dots,e_{n-1}$ are a regular
  sequence in  $(t^{-1}X)(C^{n-1})$ that generates the augmentation ideal.
  Since $j^*(e_i)=e_{i-1}$ for $2\leq i\leq n$, this completes the inductive step.
\end{proof}

A key tool in our arguments will involve expansions of elements of certain rings as power series
in a distinguished regular element.
We will now set up some theory around this in a systematic fashion.
In the following we will use the term {\em graded ring} as short hand for
`commutative $\mZ$-graded ring'. The next proposition is well-known;
since it is central for various of our arguments, we take the time to spell it out
as a point of reference.

\begin{prop}\label{prop:power expansion}
  Let $(e_1,\dots,e_n)$ be a regular sequence of homogeneous elements of a graded ring $R$.
  Let $s:S\to R$ be a graded ring homomorphism whose composite with the projection $R\to R/(e_1,\dots,e_n)$
  is an isomorphism.
  \begin{enumerate}[\em (i)]
  \item The completion of $R$ at the ideal $(e_1,\dots,e_n)$ is a power series algebra over $S$
    in the elements $e_1,\dots,e_n$.
  \item  Let $T$ be a graded ring that is complete with respect to a homogeneous ideal $J$.
  Then the map
  \begin{align*}
    \{\rho\in\text{\em grRing}(R,T)\colon \rho(e_1,\dots,e_n)\subset J \}\ &\to \ \text{\em grRing}(S,T)\times J^n \\
    \rho \qquad &\longmapsto \ (\rho s,\rho(e_1),\dots,\rho(e_n))
  \end{align*}
  is bijective.
  \end{enumerate}
\end{prop}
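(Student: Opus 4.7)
The plan is to reduce both parts to the classical commutative-algebra fact that, for a regular sequence $(e_1,\dots,e_n)$ generating a homogeneous ideal $I$ in a graded ring $R$, the associated graded ring $\bigoplus_{k\geq 0} I^k/I^{k+1}$ is isomorphic as a graded $(R/I)$-algebra to the polynomial ring $(R/I)[x_1,\dots,x_n]$, via the map sending $x_i$ to the class of $e_i$.

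For part (i), I would argue inductively that the graded ring homomorphism $S[x_1,\dots,x_n]/\mathfrak{m}^k \to R/I^k$ (where $\mathfrak{m} = (x_1,\dots,x_n)$, sending $x_i$ to the class of $e_i$, and using $s$ to identify $S$ with the subring of constants) is an isomorphism for every $k \geq 1$. The case $k=1$ is precisely the hypothesis that $s: S \to R/I$ is an isomorphism. For the induction step, I would compare the short exact sequences
\[ 0 \to \mathfrak{m}^k/\mathfrak{m}^{k+1} \to S[x_1,\dots,x_n]/\mathfrak{m}^{k+1} \to S[x_1,\dots,x_n]/\mathfrak{m}^k \to 0\]
and
\[ 0 \to I^k/I^{k+1} \to R/I^{k+1} \to R/I^k \to 0\]
and apply the five lemma, using the classical regular sequence result to identify the leftmost vertical map as an isomorphism. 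Passing to the inverse limit over $k$ then identifies $\hat R$ with the graded power series ring $S[[x_1,\dots,x_n]]$.

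For part (ii), I would use (i) to reduce to constructing and analyzing graded $S$-algebra homomorphisms out of $S[[x_1,\dots,x_n]]$. For existence, given $(\sigma, t_1,\dots,t_n) \in \text{grRing}(S,T) \times J^n$, completeness of $T$ with respect to $J$ provides a well-defined graded ring homomorphism $\hat\rho: S[[x_1,\dots,x_n]] \to T$ extending $\sigma$ and sending each $x_i$ to $t_i$; explicitly, the formula $\hat\rho\bigl(\sum_I s_I x^I\bigr) = \sum_I \sigma(s_I) t^I$ converges in the $J$-adic topology because $t^I \in J^{|I|}$. Composing with the completion map $R \to \hat R \cong S[[x_1,\dots,x_n]]$ yields the desired $\rho$. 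For uniqueness, any $\rho: R \to T$ satisfying $\rho(e_i) \in J$ sends $I^k$ into $J^k$ for every $k$, and hence extends uniquely by continuity to $\hat\rho: \hat R \to T$; by part (i) this extension is determined on the dense subring $S[x_1,\dots,x_n]$ by $\sigma$ and the $t_i$, so completeness (hence Hausdorff-ness) of $T$ pins down $\rho$ itself.

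The main obstacle is the inductive step in part (i), which rests on the classical fact that the associated graded of a regular sequence is a polynomial ring over the quotient; once that input is in hand, the remainder of the argument is essentially formal bookkeeping about completions and continuity of graded ring homomorphisms.
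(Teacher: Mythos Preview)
Your proposal is correct and follows essentially the same approach as the paper: both reduce (i) to the classical fact that the associated graded of a regular sequence is polynomial over $R/I$, then use an induction on $k$ (you via the five lemma, the paper by tracking free module bases) to identify $R/I^{k+1}$ with a truncated polynomial algebra over $S$. Your treatment of (ii) is actually more detailed than the paper's one-line appeal to the freeness of power series algebras for complete targets; in particular you make explicit the passage from maps out of $R$ to maps out of $\hat R$, which the paper leaves implicit.
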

\begin{proof}
  We write $I=(e_1,\dots,e_n)$ for the ideal generated by the regular sequence.
  It is a classical fact that because of the regularity, the unique $R/I$-algebra homomorphism
  \[ (R/I)[x_1,\dots,x_n]\ \to \ \bigoplus_{k\geq 0} I^k/I^{k+1} \]
  that sends $x_i$ to $e_i+I^2$ is an isomorphism,
  see for example \cite[\S 9.7, Th{\'e}or{\`e}me 1]{bourbaki:algebre}.
  In particular, $I^k/I^{k+1}$ is a free  $R/I$-module
  on the residue classes of the monomials of exact degree $k$ in $e_1,\dots,e_n$.
  Thus by induction on $k$, the underlying $S$-module of $R/I^{k+1}$
  is free on the residue classes of the monomials of degree at most $k$.
  So $R/I^{k+1}$ is a truncated polynomial algebra over $S$, and the completion of $R$ at $I$
  is a power series algebra over $S$ on the classes $e_1,\dots,e_n$, giving (i).
  This power series algebra is free when it comes to morphisms with complete targets, yielding (ii).
\end{proof}

\begin{defn}\label{def:series_expansion}
  Let $(e_1,\dots,e_n)$ be a regular sequence of homogeneous elements of a commutative graded ring $R$,
  and let $s:S\to R$ be a ring homomorphism whose composite with the projection $R\to R/(e_1,\dots,e_n)$
  is an isomorphism.
  The {\em power series expansion} of $R$ at $e_1,\dots,e_n$
  is the unique morphism $d:R\to S\dbr{e_1,\dots,e_n}$ of graded $S$-algebras that sends $e_i$ to $e_i$
  for all $1\leq i\leq n$.
\end{defn}

\begin{construction}\label{con:d^n}
  We let $(X,a,t)$ be an oriented $\elRO$-algebra and $n\geq 0$.
  The inflation homomorphism $p^*:X(1)\to (t^{-1}X)(C^n)$
  is a section to the restriction homomorphism $\res^{C^n}_1$.
  Since the kernel of $\res^{C^n}_1$ is generated by the regular sequence of Euler classes
  $e_1,\dots,e_n$, Proposition \ref{prop:power expansion} provides a
 unique graded $X(1)$-algebra homomorphism
  \[ d^n\ : \ (t^{-1}X)(C^n)\ \to\ X(1)\dbr{e_1,\dots,e_n} \]
  that sends $e_i$ to $e_i$ for all $1\leq i\leq n$, the {\em power series expansion}
  in the Euler classes $e_1,\dots,e_n$.
\end{construction}

We recall that a {\em 2-torsion formal group law} over a graded commutative ring $R$
is a power series $F(x,y)\in R\dbr{x,y}$ that is homogeneous of degree $-1$ with respect
to the grading $\deg(x)=\deg(y)=-1$,
and that satisfies
\[ F(x,0)=x,\quad F(x,y)=F(y,x),\quad F(x,F(y,z))=F(F(x,y),z)\text{\quad and\quad} F(x,x)=0\ . \]
In the next theorem and in what follows, we shall write $\mu:C\times C\to C$
for the multiplication homomorphism.

\begin{theorem}\label{thm:fgl}
  Let $(X,a,t)$ be an oriented $\elRO$-algebra. 
  The image of the Euler class $e_\mu$ under the power series expansion
  \[ d^2\ : \ (t^{-1}X)(C\times C)\ \to \ X(1)\dbr{e_1,e_2} \]
  is a 2-torsion formal group law over the graded ring $X(1)$,
  the {\em formal group law} of $(X,a,t)$.
\end{theorem}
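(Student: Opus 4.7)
My plan is to reduce each axiom of a 2-torsion formal group law to an identity between pullbacks of $e_\mu$ along suitable homomorphisms of elementary abelian 2-groups, and then transport these identities through the power series expansion. Homogeneity of $F=d^2(e_\mu)$ in degree $-1$ is immediate, since $d^2$ is a graded $X(1)$-algebra homomorphism and $e_\mu$ lies in $(t^{-1}X)(C\times C)_{-1}$.

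The key technical input is a naturality principle for the maps $d^n$. For any homomorphism $\alpha:C^m\to C^n$, set $\alpha_j=p_j\alpha:C^m\to C$, and let $\tilde\alpha^*:X(1)\dbr{e_1,\ldots,e_n}\to X(1)\dbr{e_1,\ldots,e_m}$ denote the unique continuous graded $X(1)$-algebra map with $\tilde\alpha^*(e_j)=d^m(e_{\alpha_j})$. This substitution is well defined because each $e_{\alpha_j}$ restricts to $0$ at the trivial group, placing $d^m(e_{\alpha_j})$ in the maximal ideal of the target. The composites $\tilde\alpha^*\circ d^n$ and $d^m\circ \alpha^*$ are then both graded $X(1)$-algebra homomorphisms from $(t^{-1}X)(C^n)$ into the complete ring $X(1)\dbr{e_1,\ldots,e_m}$ that send each $e_i$ to the maximal-ideal element $d^m(e_{\alpha_i})$; the uniqueness clause of Proposition \ref{prop:power expansion}(ii) forces them to coincide.

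With naturality in hand, each axiom follows from a standard identity among group homomorphisms between powers of $C$. Commutativity: the swap $\tau:C\times C\to C\times C$ satisfies $\mu\tau=\mu$, and $\tilde\tau^*$ exchanges $e_1$ and $e_2$, so $F(e_1,e_2)=F(e_2,e_1)$. Unitality: the inclusion $i:C\to C\times C$, $c\mapsto(c,1)$, satisfies $\mu i=\mathrm{id}_C=p_1 i$ and $p_2 i=1$, giving $F(e,0)=d^1(e)=e$. Two-torsion: the diagonal $\Delta:C\to C\times C$ satisfies $\mu\Delta=1$ because $c^2=1$ in $C$, so $\Delta^*(e_\mu)=0$ and $F(e,e)=0$. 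Associativity: the two maps $\mu\times\mathrm{id},\,\mathrm{id}\times\mu:C^3\to C^2$ both precompose with $\mu$ to give a common triple multiplication $\mu_3:C^3\to C$, and naturality computes $d^3(e_{\mu_3})$ in two ways, yielding $F(F(e_1,e_2),e_3)=F(e_1,F(e_2,e_3))$.

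Each of these identities is an equation in a free power series algebra $X(1)\dbr{e_1,\ldots,e_n}$ evaluated on its distinguished generators; renaming variables transfers it to the corresponding formal axiom in the variables $x,y,z$. I expect the main obstacle to be the careful setup of the naturality principle for $d^n$, which is where the full strength of the uniqueness clause in Proposition \ref{prop:power expansion}(ii) gets used; after that, verifying the four axioms is a matter of chasing restrictions along $\tau$, $i$, $\Delta$, and the two bracketings of the triple multiplication.
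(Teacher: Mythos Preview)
Your proposal is correct and follows essentially the same approach as the paper: establish a naturality square relating $d^m\circ\alpha^*$ with a substitution map $\tilde\alpha^*\circ d^n$ via the uniqueness clause of Proposition~\ref{prop:power expansion}(ii), and then specialize to $i_1$, $\tau$, $\Delta$, $\mu\times C$, and $C\times\mu$ to read off the four axioms. Your naturality principle is stated slightly more generally (arbitrary $\alpha:C^m\to C^n$ rather than just targets $C^2$), which is convenient for the associativity step where the paper tacitly also needs the case $\alpha=p_{12}:C^3\to C^2$.
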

\begin{proof}
  We let $\alpha:C^n\to C^2$ be a group homomorphism, for some $n\geq 1$.
  We claim that the following diagram of graded $X(1)$-algebras commutes:
  \[ \xymatrix@C=12mm{
   (t^{-1}X)(C^2)\ar[d]_{d^2} \ar[r]^-{\alpha^*}&
      (t^{-1}X)(C^n)\ar[d]^{d^n}   \\
   X(1)\dbr{e_1,e_2}   \ar[r]_-{\alpha^\natural} & X(1)\dbr{e_1,\dots,e_n}  } \]
  Here $\alpha^\natural$ is the unique graded $X(1)$-algebra homomorphism
  that satisfies $\alpha^\natural(e_i)=d^n(e_{p_i\alpha})$ for $i=1,2$.
  Indeed, both composites have the same effect on the regular sequence $(e_1,e_2)$ that generates
  the augmentation ideal of $(t^{-1}X)(C\times C)$.
  So they coincide by Proposition \ref{prop:power expansion} (ii).
  The commutativity of the diagram then yields
  \[ \alpha^\natural(F(e_1,e_2))\ = \ \alpha^\natural(d^2(e_\mu)) \ = \ d^n(\alpha^*(e_\mu))\ .\]
  We derive the four relations of a 2-torsion formal group law by specializing
  for five different choices of $\alpha$.
  When $\alpha$ is the inclusion of the first factor $i_1:C\to C\times C$, 
  then $i_1^\natural:  X(1)\dbr{e_1,e_2}\to X(1)\dbr{e_1}$ sets $e_2=0$,
  and $i_1^*(e_\mu)=e_1$, so we get
  \[ F(e_1,0)\ = \  i_1^\natural(F(e_1,e_2))\ = \ \ d^1(e_1)\ = \ e_1\ .\]
  When $\alpha$ is the diagonal $\Delta:C\to C\times C$,
  then $\delta^\natural:  X(1)\dbr{e_1,e_2}\to X(1)\dbr{e_1}$ sends both $e_1$ and $e_2$ to $e_1$,
  and $\delta^*(e_\mu)=0$, so we obtain
  \[ F(e_1,e_1)\ = \ \Delta^\natural(F(e_1,e_2))\ = \ \ d^1(0)\ = \ 0 \ .\]
  When $\alpha$ is the involution $\tau:C\times C\to C\times C$
  that interchanges the two factors,
  then $\tau^\natural:  X(1)\dbr{e_1,e_2}\to X(1)\dbr{e_1,e_2}$ interchanges $e_1$ and $e_2$,
  and $\tau(e_\mu)=e_\mu$; so we get the commutativity relation
  \[ F(e_2,e_1)\ = \ \tau^\natural(F(e_1,e_2))\ = \ \ d^2(e_\mu)\ = \ F(e_1,e_2) \ .\]
  For $\alpha=\mu\times C:C^3\to C^2$, we have
  $p_1(\mu\times C)=\mu\circ p_{1,2}$, for $p_{1,2}:C^3\to C^2$ the projection to
  the first two coordinates,
  and $p_2(\mu\times C)=p_3$. So the homomorphism
  $(\mu\times C)^\natural:X(1)\dbr{e_1,e_2}\to X(1)\dbr{e_1,e_2,e_3}$
  sends $e_1$ to $d^3(p_{1,2}^*(e_\mu))=d^2(e_\mu)=F(e_1,e_2)$, and it sends $e_2$ to $e_3$.
  So we obtain
  \[ F(F(e_1,e_2),e_3)\ = \ (\mu\times C)^\natural(F(e_1,e_2))\ = \ \ d^3((\mu\times C)^*(e_{\mu})) \ .\]
  The analogous argument for $\alpha=C\times \mu$ yields
  \[ F(e_1,F(e_2,e_3))\ = \  d^3((C\times\mu)^*(e_{\mu})) \ .\]
  Because $(\mu\times C)^*(e_\mu)=(C\times\mu)^*(e_\mu)$
  (the multiplication of $C$ is associative), 
  this establishes the associativity relation $ F(F(e_1,e_2),e_3)= F(e_1,F(e_2,e_3))$.
\end{proof}

\begin{eg}[Additive orientations yield additive formal group laws]\label{eg:additive is additive}
  Let $(X,a,t)$ be an additive $\elRO$-algebra in the sense of Definition \ref{def:additive},
  i.e.,
  \[  a_1 t_\mu t_2\ +\ t_1 a_\mu t_2\ +\ t_1 t_\mu a_2 \ = \ 0 \]
  in $X(C\times C,2-(p_1\oplus\mu\oplus p_2))$. Then
  \[   e_1 + e_\mu + e_2    \ = \
 (a_1 t_\mu t_2 + t_1 a_\mu t_2 + t_1 t_\mu a_2)/t_1 t_\mu t_2  \ = \ 0   \ . \]
  So 
  \[ F(e_1,e_2)\ = \ d^2(e_\mu)\ = \ d^2(e_1+e_2)\ = \ e_1 + e_2\ ,  \]
  i.e., the formal group law is the additive formal group law.
\end{eg}

\section{Geometric fixed points}

To establish the universal property of equivariant bordism,
we shall make use of the {\em geometric fixed points}
of an orientable $\elRO$-algebra, obtained by inverting all the $a$-classes
of all nontrivial characters.
For orientable $\elRO$-algebras that arise from global ring spectra,
this construction generalizes the geometric fixed point construction
for equivariant spectra, see Remark \ref{rk:reconcile} below, thence the name.

We also discuss various ways in which the power series expansions of the previous section
can be extended to Laurent series expansions on geometric fixed points.
The main result is the integrality criterion
of Theorem \ref{thm:detection} that allows to detect when a class
in the geometric fixed point ring $\Phi^A_*X $ arises from a class in $X(A)_*$.
This criterion will be used in Theorem \ref{thm:fix_criterion}
to show how morphisms of oriented $\elRO$-algebras
can be recovered from the induced morphisms of geometric fixed point rings.

\begin{construction}[Geometric fixed points]\label{con:geometric fix}
  We let $(X,a)$ be an orientable $\elRO$-algebra.
  For every elementary abelian 2-group $A$,
  we define the ring of {\em $A$-geometric fixed points} $\Phi^A_* X$
  as the integer graded part of the localization of the $I_A$-graded ring $X(A,\star)$
  obtained by inverting all classes
  of the form $a_\lambda$ for all nontrivial $A$-characters $\lambda$.
  Homogeneous elements in $\Phi^A_k X$ are thus fractions of the form $x/a_V$
  for some $A$-representation $V$ with trivial fixed points and some $x\in X(A,k-V)$.
  These elements satisfy the relations
  \[ x/a_V \ = \  (x\cdot a_W)/a_{V\oplus W}  \text{\qquad and\qquad}
    x/a_V \cdot y/a_W \ = \ (x\cdot y)/a_{V\oplus W} \]
  for all $A$-representations $W$ with $W^A=0$.
  In an orientable $\elRO$-algebra, 
  multiplication by the classes $a_\lambda$ is injective;
  hence also the maps $-/a_V\colon X(A,k-V)\to \Phi^A_k X$ are injective.

  The geometric fixed point rings enjoy {\em inflation homomorphisms},
  i.e., they are contravariantly functorial for epimorphisms $\beta:B\to A$.
  Indeed, $\beta^*(a_\lambda)=a_{\lambda\beta}$,
  so the inflation homomorphism $\beta^*:X(A,\star)\to X(B,\star)$
  induces a ring homomorphism $\beta^*:\Phi^A_* X\to\Phi^B_* X$ of localizations.
  In terms of the above presentation as fractions, we have
  \begin{equation}\label{eq:inflation_geometric}
    \beta^*(x/a_V) \ = \ \beta^*(x)/a_{\beta^*(V)}\ .    
  \end{equation}
\end{construction}

We emphasize a crucial difference between $\Phi^A_* X$,
the localization away from $a$, and the localization $(t^{-1}X)(A)$ away from $t$
discussed in Construction \ref{con:t-localization}:
the latter admit restriction homomorphisms for {\em all} group homomorphisms,
the former only admit inflations, i.e., restriction along epimorphisms.

\begin{rk}[Relation to geometric fixed points of equivariant spectra]\label{rk:reconcile}
  As already mentioned above, the term `geometric fixed points'
  for the localization of an orientable $\elRO$-algebra
  away from the $a$-classes is motivated by the topological examples.
  Indeed, for every compact Lie group $G$ and every genuine $G$-spectrum $R$,
  a common definition of the geometric fixed point homotopy groups
  is as the reduced $G$-equivariant $R$-homology
  of a specific based $G$-space $\tilde E\mathcal P$, see for example
  \cite[(3.18)]{greenlees-may:eqstho}.
  Here $\tilde E\mathcal P$ is the unreduced suspension of a classifying $G$-space
  for the family of proper closed subgroups of $G$.
  A natural homomorphism $R_*^G\to R_*^G(\tilde E\mathcal P)$
  from the equivariant homotopy groups to the geometric fixed point homotopy groups
  is defined as the effect on $R_*^G(-)$
  of the inclusion of cone points $S^0\to \tilde E\mathcal P$.
  If $R$ is a ring spectrum in the
  homotopy category of genuine $G$-spectra, this map is a morphism of graded rings,
  and it factors through an isomorphism from the localization
  of the $R O(G)$-graded homotopy ring of $R$ at the pre-Euler classes of all real
  $G$-representations $V$ with $V^G=0$,
  see \cite[Proposition 3.20]{greenlees-may:eqstho}.

  If $R$ is a global ring spectrum and $\lambda$ a nontrivial character
  of an elementary abelian 2-group $A$,
  then $a_\lambda\in R^\sharp(A,-\lambda)=R_0^A(S^\lambda)$
  is precisely the pre-Euler class of the $A$-representation $\lambda$,
  so the geometric fixed point ring $\Phi^A_* R^\sharp$,
  in the sense of Construction \ref{con:geometric fix},
  of the orientable $\elRO$-algebra $(R^\sharp,a)$
  is the localization of the $R O(A)$-graded homotopy ring of $R$ at the pre-Euler classes.
  Hence the two geometric fixed point rings $\Phi_*^A R^\sharp$ and $R_*^A(\tilde E\mathcal P)$
  of an orientable global ring spectrum are naturally isomorphic.
\end{rk}

For a $\mZ$-graded ring $R$, we now start using the notation
\[ R\lr{\theta}\ = \ R\dbr{\theta}[\theta^{-1}] \]
for the $\mZ$-graded Laurent power series in a variable $\theta$ of degree $-1$,
the localization of the power series ring away from the element $\theta$.
So elements of $R\lr{\theta}$ of degree $k$ are power series $\sum_{i\in\mZ} x_i \theta^i$
such that $x_i\in R_{k+i}$, and $x_i=0$ for all almost all negative values of $i$. \smallskip

Let $(X,a,t)$ be an oriented $\elRO$-algebra, and let $K$ be an elementary abelian 2-group.
We will now define a Laurent power series expansion
on the geometric fixed point ring 
\[ d_K\ :\  \Phi^{K\times C}_* X\ \to \ (\Phi^K_* X)\lr{\theta} \ .\]
These homomorphisms will be instrumental in detecting
{\em effective} classes in the geometric fixed point rings, see Theorem \ref{thm:detection},
and in reconstructing morphisms of oriented $\elRO$-algebra
from their effect on geometric fixed point rings, see Theorem \ref{thm:fix_criterion}.

\begin{construction}\label{con:d_K}
  We let $(X,a,t)$ be an oriented $\elRO$-algebra, and we let $K$
  be an elementary abelian 2-group.
 The construction of the Laurent power series expansion requires certain intermediate
  constructions that we summarize in the following commutative diagram:
\begin{equation}\begin{aligned}\label{eq:d_K diagram}
  \xymatrix@C=12mm{
    &      \Psi^K X\ar[d]^{\delta_K}\ar[r] \ar@/_1pc/[dl]_-{i_1^*} &
    (\Psi^K X)[e_2^{-1}]\ar[d]_{\bar\delta_K} &\Phi^{K\times C}X \ar[l]\ar@/^1pc/[dl]^-{d_K} \\
\Phi^K X &(\Phi^K X)\dbr{\theta}\ar[r]\ar[l]^{\theta=0}&   (\Phi^KX)\lr{\theta} 
  }     
\end{aligned}\end{equation}
  We introduce the graded ring
  \begin{equation}\label{eq:define_Psi}
    \Psi^K X \ = \ X(K\times C,\star)[t_2^{-1},a_V^{-1}\colon V^K=0] \ ,    
  \end{equation}
  the integer graded part of the localization of the $I_{K\times C}$-graded ring $X(K\times C,\star)$
  at all $a$-classes of $(K\times C)$-representations with trivial $K$-fixed points,
  and the class $t_2=p_2^*(t)$ of the projection to the second factor.
  Inverting the classes $a_V$ for all $V^K=0$ is the same as inverting the classes
  $a_\nu$ for all non-zero characters of $K\times C$ {\em except}
  the projection to the second factor.
  For $\nu\ne p_2$, the restriction of $\nu$ to $K$ is nontrivial.
  The restriction homomorphism 
  \[ i_1^*\ : \  X(K\times C,\star) \ \to \ X(K,\star) \]
  along the embedding $i_1:K\to K\times C$ as the first factor
  sends the class $t_2$ to 1, and it sends the class $a_V$
  to the pre-Euler class of the restriction of $V$ to $K$.
  So the restriction extends uniquely to localizations to a morphism of graded rings
  \[ i_1^* \ : \ \Psi^K X \ \to \ \Phi^K X \]
  that we denote by the same symbol. The inflation homomorphism
  \[ p_1^*\ : \  X(K,\star) \ \to \ X(K\times C,\star) \]
  along the projection $p_1:K\times C\to K$ extends uniquely to a homomorphism on the localization
  \[ p_1^*\ : \ \Phi^K X \ \to \ \Psi^K X \]
  that splits $i_1^*$.
  We write  $a_2=p_2^*(a)$ for the pre-Euler class of the projection to the second factor, and we set
  \[  e_2\ =\ a_2/t_2\ \in \ \Psi^K_{-1} X \ .\]
 Because localization is exact, the sequence
  \[ 0\ \to\ \Psi^K_{*+1} X\ \xra{e_2\cdot -}\ \Psi^K_* X\ \xra{\ i_1^*\ }\ \Phi^K_* X\ \to\  0\]
  is exact. Hence $e_2$ is regular in $\Psi^K X$, and the composite of
  $p_1^*:\Phi^K_* X\to\Psi^K_* X$ and the projection $\Psi^K_* X\to (\Psi^K_* X)/(e_2)$
  is an isomorphism. We let
  \[  \delta_K \ : \ \Psi^K X\ \to \ (\Phi^K X)\dbr{\theta} \]
  denote the associated power series expansion
  in the sense of Definition \ref{def:series_expansion}.
  So $\delta_K$ is the unique morphism of graded $\Phi^K X$-algebras
  such that $\delta_K(e_2)=\theta$.
  The homomorphism extends uniquely to the localizations
  to a homomorphism
  \[ \bar\delta_K\ : \ (\Psi^K X)[e_2^{-1}] \ \to\  (\Phi^K X)\lr{\theta}\ .   \]
  The ring $(\Psi^K X)[e_2^{-1}]$ is the localization of $X(K\times C,\star)$ obtained by inverting
  all $a$-classes and the class $t_2$.
  Since $\Phi^{K\times C}X$ is a less drastic localization -- formed by inverting  all $a$-classes --
  it affords a localization morphism
  $\Phi^{K\times C}X\to (\Psi^K X)[e_2^{-1}]$ that is localization away from $t_2/a_2$.
  The composite with $\bar\delta_K$ is the homomorphism
  \[ d_K\ : \ \Phi^{K\times C}X\ \to\  (\Phi^K X)\lr{\theta}\ ;  \]
  it is then given by
  \[ d_K(x/(a_V a_2^n))\ = \ \delta_K(x/(a_V t_2^n)) \cdot \theta^{-n}\ , \]
  for $x\in X(K\times C,k-(V\oplus n p_2))$ with $V^K=0$.
  We have now introduced all ring homomorphisms in the diagram \eqref{eq:d_K diagram}.
\end{construction}

We make the following definition:

\begin{defn} \label{def:effective}
  Let $X$ be an orientable $\elRO$-algebra and $A$ an elementary abelian 2-group.
  A class in $\Phi_k^A X$ is {\em effective} if it lies in the image of the homomorphism
  $-/1:X(A)_k\to \Phi_k^A X$.
\end{defn}

For every orientable $\elRO$-algebra, the map $-/1:X(A)_k\to \Phi_k^A X$ is injective.
So if a class is effective, then its preimage in the integer graded subring is unique.
Said differently, the maps $-/1:X(A)_\ast\to \Phi_*^A X$ form an isomorphism
from the $\mZ$-graded part of $X(A,\star)$ to the subring of effective classes.

The next theorem establishes an effectivity criterion for geometric fixed point classes
in terms of the homomorphisms $d_K:\Phi^{K\times C}X\to (\Phi^K X)\lr{\theta}$.
Effective classes in $\Phi^{K\times C}X$ in particular also lie in the
  ring $\Psi^K X$, and hence their Laurent power series expansion is 
  contained in the subring $(\Phi^K X)\dbr{\theta}$,
  compare the diagram \eqref{eq:d_K diagram}.
  Moreover, $d_K$ is designed so that for $u\in X(K\times C)_*$, the constant coefficient
  is given by
  \[ d_K(u/1)(0)\ = \ i_1^*(u/1) \ = \ i_1^*(u)/1 \ .\]

\begin{theorem} \label{thm:detection} 
  Let $(X,a,t)$ be an oriented $\elRO$-algebra,
  and let $A$ be an elementary abelian 2-group.
  For every $y\in \Phi^A X$, the following two conditions are equivalent.
  \begin{enumerate}[\em (a)]
  \item 
    The class $y$ is effective.
  \item
    For every isomorphism $\alpha:K\times C\iso A$,
    the class $d_K(\alpha^*(y))$ in $(\Phi^K X)\lr{\theta}$ is integral,
    i.e., contained in the subring  $(\Phi^K X)\dbr{\theta}$.
  \end{enumerate}
\end{theorem}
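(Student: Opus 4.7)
The direction (a) $\Rightarrow$ (b) is immediate: if $y = u/1$ with $u \in X(A)_k$, then for any isomorphism $\alpha\colon K\times C\iso A$ the pullback $\alpha^*(y) = \alpha^*(u)/1$ already lies in the subring $\Psi^K X \subset (\Psi^K X)[e_2^{-1}]$, so no negative powers of $\theta$ occur and $d_K(\alpha^*(y)) = \delta_K(\alpha^*(u))$ is integral.

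For the converse, my plan is to extract divisibility information one character at a time and then combine the results via Proposition \ref{prop:coprime}(ii). If $A$ is trivial there is nothing to prove, so assume $A$ has positive rank and write $y = x/a_V$ with $V^A = 0$ and $x \in X(A,k-V)$. For each nontrivial character $\lambda\colon A\to C$ with multiplicity $m_\lambda$ in $V$, I pick an isomorphism $\alpha_\lambda\colon K_\lambda \times C \iso A$ that realises $\lambda$ as the projection $p_2$. The goal is to show that integrality of $d_{K_\lambda}(\alpha_\lambda^*(y))$ forces divisibility of $x$ by $a_\lambda^{m_\lambda}$ in the un-localized ring $X(A,\star)$. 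Applying this for every nontrivial $\lambda$ and invoking Proposition \ref{prop:coprime}(ii) upgrades these individual divisibilities to a single factorisation $x = u\cdot a_V$ with $u \in X(A)_k$, whence $y = u/1$ is effective.

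To carry out the single-character step, fix $\lambda$ and use $\alpha_\lambda$ to identify $A = K\times C$ with $\lambda = p_2$; set $n := m_\lambda$. Decompose $V = V_0\oplus np_2$ so that $V_0$ contains no copy of $p_2$; in particular $V_0^K = 0$. Using $a_2 = t_2 e_2$, the class $y$ maps into $(\Psi^K X)[e_2^{-1}]$ as $(x/(a_{V_0}t_2^n))\cdot e_2^{-n}$, and integrality of $d_K(y)$ is equivalent to $\delta_K(x/(a_{V_0}t_2^n)) \in (\Phi^K X)\dbr{\theta}$ being divisible by $\theta^n$. I then prove by induction on $n$ that this forces $x$ to be divisible by $a_2^n$ in $X(A,\star)$. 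For $n\geq 1$, the vanishing of the constant term identifies, using $\res^A_K(t_2) = 1$ and naturality of the $a$-classes, with the equation $\res^A_K(x)/a_{V_0|_K} = 0$ in $\Phi^K X$; since the localization map from $X(K,\cdot)$ to $\Phi^K_* X$ is injective, this forces $\res^A_K(x) = 0$. The fundamental exactness property then produces a unique $x' \in X(A, k - V_0 - (n-1)p_2)$ with $x = a_2 x'$, and since $x/(a_{V_0}t_2^n) = e_2\cdot(x'/(a_{V_0}t_2^{n-1}))$, cancelling one factor of $\theta$ in the power series (where $\theta$ is a non-zero-divisor) recovers the inductive hypothesis for $x'$ at level $n-1$.

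The main obstacle I anticipate is exactly this bridge from the formal power-series condition on $d_K(y)$ back to an honest divisibility of $x$ in the un-localized ring $X(A,\star)$. The key idea is to execute the induction inside $\Psi^K X$, where $e_2$ is regular, but to lift each divisibility step back to $X(A,\star)$ via the fundamental exactness property rather than inside the localization; this keeps the intermediate quotients $x'$ genuinely in $X(A,\star)$ and sets up the final application of Proposition \ref{prop:coprime}(ii) that assembles the $\lambda$-by-$\lambda$ divisibilities into the desired divisibility by $a_V$.
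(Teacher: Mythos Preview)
Your proof is correct and follows the same overall architecture as the paper's: fix a nontrivial character $\lambda$, realise it as $p_2$ via an isomorphism $K\times C\iso A$, extract divisibility of $x$ by $a_\lambda^{m_\lambda}$ from the integrality hypothesis, and then combine these divisibilities via Proposition~\ref{prop:coprime}(ii). The difference lies in how you pass from the power-series condition to divisibility in $X(A,\star)$.

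The paper argues inside the localized ring $\Psi^K X$: from $\delta_K(\alpha^*(x)/a_U t_2^n)\in\theta^n\cdot(\Phi^K X)\dbr{\theta}$ it deduces that $\alpha^*(x)/a_U t_2^n$ is divisible by $e_2^n$ in $\Psi^K X$; clearing denominators then only gives that $x\cdot t_\lambda^m$ is divisible by $a_\lambda^n$ for some $m\geq 0$, and the extraneous factor $t_\lambda^m$ is removed by invoking Proposition~\ref{prop:coprime}(i). Your route is more direct: you run the induction on $n$ entirely inside $X(A,\star)$, using the fundamental exactness property at each step to peel off one factor of $a_2$ from $x$ itself. This bypasses the localize--then--lift manoeuvre and Proposition~\ref{prop:coprime}(i) altogether. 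The paper's argument has the minor conceptual advantage of isolating the ``divisibility by $e_2^n$ in $\Psi^K X$'' as a separate statement, but your approach is shorter and uses strictly less machinery.
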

\begin{proof}
  (a)$\Longrightarrow$(b) For every $u\in X(A)_k$, the class
  $\alpha^*(u/1)=\alpha^*(u)/1$ lies in  $\Psi^K X$, so $d_K(\alpha^*(u/1))$
  lies in $(\Phi^K X)\dbr{\theta}$,
  compare the commutative diagram \eqref{eq:d_K diagram}.

  (b)$\Longrightarrow$(a)
  We write $y=x/a_V$ for some $A$-representation $V$ with $V^A=0$, some $k\in\mZ$,
  and some $x\in X(A,k-V)$.
  We let $\lambda$ be a nontrivial $A$-character, with kernel $K$.
  We choose an isomorphism $\alpha:K\times C\iso A$ such that $\lambda\circ\alpha=p_2$.
  We decompose $\alpha^*(V)=U\oplus n p_2$ for some $(K\times C)$-representation
  $U$ with $U^K=0$, and some $n\geq 0$. Then
  \[ d_K(\alpha^*(y))\ = \ d_K(\alpha^*(x)/a_{\alpha^*(V)})\ = \
     d_K(\alpha^*(x)/a_U a_2^n)\ = \
    \delta_K(\alpha^*(x)/a_U t_2^n)\cdot \theta^{-n}\ .\]
  This class is integral by hypothesis,
  and $\delta_K(\alpha^*(x)/a_U t_2^n)$ is integral by definition.
  So we conclude that
  \[ \delta_K(\alpha^*(x)/a_U t_2^n)\ \in \ \theta^n\cdot (\Phi^K X)\dbr{\theta}\ .\]
  In other words, the first $n$ coefficients of the power series
  $\delta_K( \alpha^*(x)/a_U t_2^n)$ vanish.
  Hence the class $\alpha^*(x)/a_U t_2^n$ is divisible by $e_2^n$ in the ring $\Psi^K X$.
    
  Since all the $a$-classes are non zero-divisors, we deduce that for some $m\geq 0$
  the element $\alpha^*(x)\cdot t_2^m$ is divisible by $a_2^n$ in $X(K\times C,\star)$.
  Thus the element $x\cdot(\alpha^{-1})^*(t_2^m)=x\cdot t_\lambda^m$
  is divisible by $(\alpha^{-1})^*(a_2^n)=a_\lambda^n$ in $X(A,\star)$.
  Because $\res^A_K(t_\lambda^m)=1$,
  Proposition \ref{prop:coprime} (i) shows that 
  already $x$ is divisible by $a_\lambda^n$.
  Since $n$ is the multiplicity of $p_2$ in $\alpha^*(V)$,
  it is also the multiplicity of $\lambda=(\alpha^{-1})^*(p_2)$ in $V$.
  We have thus shown that for every nontrivial $A$-character $\lambda$,
  the class $x$ is divisible by $a_{V_\lambda}$.
  Proposition \ref{prop:coprime} (ii) then shows that $x$ is divisible by $a_V$,
  say $x=z\cdot a_V$. Hence $y=x/a_V{}=z/1$ is effective.
\end{proof}

\begin{rk}[Relation to Boardman's work in equivariant bordism]
  For the two-element group $C$, the statement of Theorem \ref{thm:detection} 
  simplifies: an element $y\in \Phi^C X$ 
  is effective if and only if the Laurent power series $d_1(y)$ is integral.
  For the oriented $\elRO$-algebra of equivariant bordism,
  Boardman defines a `stable bordism $J$-homomorphism' $J:\Phi_*^C\cN\to \cN_*\lr{\theta}$ 
  in \cite[\S 6]{boardman:revisited}, using different notation and geometric arguments.
  In this incarnation, the special case $A=C$ of Theorem \ref{thm:detection} 
  was proved by Boardman in \cite[Theorem 11]{boardman:revisited}.
  In this sense, our  Theorem \ref{thm:detection} is a vast generalization of
  Boardman's result to elementary abelian 2-groups of arbitrary rank
  and general oriented $\elRO$-algebras.
\end{rk}

For every oriented $\elRO$-algebra, the class $t_\mu/a_\mu\in \Phi^{C\times C}X$
also lies in the ring $\Psi^C X$, so that $d_C(t_\mu/a_\mu)=\delta_C(t_\mu/a_\mu)$,
and this class is a power series (as opposed to a Laurent power series) in $\theta$
with coefficients in $\Phi^C X$.

\begin{defn}\label{def:define_beta}
  Let $(X,a,t)$  be an oriented $\elRO$-algebra.
  For $n\geq 0$, we define the classes $\beta_n\in \Phi^C_{n+1}X$ by
  \[ d_C(t_\mu/a_\mu)\ = \ \sum_{n\geq 0} \beta_n\cdot \theta^n\ . \]
\end{defn}

For example, $\beta_0=i_1^*(t_\mu/a_\mu)=t/a$.
The next theorem might seem unmotivated right now,
but it will enter crucially in the proof of our main result, Theorem \ref{thm:main}.
In part (ii), what matters is not the precise formula,
but the fact that the classes $\delta_C(\mu^*(\beta_n))$
can be described purely in terms of the $2$-torsion formal group law
and the classes $\beta_i$ themselves.

\begin{theorem}\label{thm:determine_betas}
  Let $(X,a,t)$ be an oriented $\elRO$-algebra with 2-torsion formal group law $F$.
  \begin{enumerate}[\em (i)]
  \item 
    The images of the classes $\beta_n$ under the power series expansion
    $d_1:\Phi^C X\to X(1)\lr{\theta}$ satisfy the relation
    \[  F(\theta,\xi)\cdot \sum_{n\geq 0}\,  d_1(\beta_n)\cdot \xi^n \ = \ 1 \]
    in the ring $X(1)\lr{\theta}\dbr{\xi}$.
  \item 
    The classes $\delta_C(\mu^*(\beta_n))\in (\Phi^C X)\dbr{\theta}$ satisfy the relation
    \[ \sum_{n\geq 0}\delta_C(\mu^*(\beta_n))\cdot \xi^n \ = \ \sum_{n\geq 0}\beta_n\cdot F(\theta,\xi)^n \]
    in the ring $(\Phi^C X)\dbr{\theta,\xi}$.
  \end{enumerate}
\end{theorem}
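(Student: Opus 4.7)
The plan is to derive both parts from the formal group law identity $d^2(e_\mu) = F(e_1, e_2)$ of Theorem~\ref{thm:fgl}, by comparing two different power-series expansions of the class $t_\mu/a_\mu$ and its pullbacks.

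For part~(i), let $\hat d_1 \colon (\Phi^C X)\dbr{\xi} \to X(1)\lr{\theta}\dbr{\xi}$ denote the coefficient-wise extension of $d_1$, where I have renamed the variable of $(\Phi^C X)\dbr{\theta}$ from the definition of $\beta_n$ to $\xi$, to avoid collision with the $\theta$ of $X(1)\lr{\theta}$. Then $(\hat d_1 \circ \delta_C)(t_\mu/a_\mu) = \sum_n d_1(\beta_n)\xi^n$, so it suffices to show that this equals $F(\theta, \xi)^{-1}$. To do so, introduce the common localization $R$ of $X(C\times C, \star)$ obtained by inverting all $a$-classes and all $t$-classes of nontrivial characters. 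Then $e_1, e_2, e_\mu$ are all units in $R$, and $R$ is simultaneously a localization of $(t^{-1}X)(C\times C)$ and of $(\Psi^C X)[e_2^{-1}]$. On $R$, both the Laurent-series extension of $d^2$ (which sends $e_\mu \mapsto F(\theta, \xi)$ by Theorem~\ref{thm:fgl}) and the Laurent-series extension of $\hat d_1 \circ \bar\delta_C$ define $X(1)$-algebra homomorphisms into $X(1)\lr{\theta}\lr{\xi}$ sending $e_1 \mapsto \theta$ and $e_2 \mapsto \xi$. These two maps coincide by a uniqueness argument: first apply Proposition~\ref{prop:power expansion} on the $(e_1, e_2)$-adic completion of a suitable integral subring, then extend by localizing at $e_1$ and $e_2$. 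Evaluating on $t_\mu/a_\mu = e_\mu^{-1}$ therefore equates $F(\theta, \xi)^{-1}$ with $\sum_n d_1(\beta_n)\xi^n$, which rearranges to the identity in~(i).

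For part~(ii), I would pass to the three-factor setting and exploit the associativity of $\mu$. The two inflations $(\mu \times C)^*, (C \times \mu)^* \colon \Psi^C X \to \Psi^{C^2} X$ (the latter being the analog of $\Psi^C X$ for the group $C^3$, with the third factor distinguished) both send $t_\mu/a_\mu$ to the common element $t_{p_1 p_2 p_3}/a_{p_1 p_2 p_3}$. Computing the $\delta_3$-expansion (in the Euler class $e_3$ of the third factor) of this common element in two ways gives two expressions. The $(\mu \times C)^*$-computation, using $\delta_3(e_3) = \theta_3$ and the identification of $\Phi^C X$-coefficients via $\mu^* \colon \Phi^C X \to \Phi^{C^2} X$, directly yields $\sum_n \mu^*(\beta_n) \theta_3^n$. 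The $(C \times \mu)^*$-computation, using $\delta_3(e_{p_2 p_3}) = F(e_2, \theta_3)$ (the analog of Theorem~\ref{thm:fgl} for $\delta$-expansions) and the identification of $\Phi^C X$-coefficients via the first-factor inflation $(p_1^{C^2})^* \colon \Phi^C X \to \Phi^{C^2} X$, yields $\sum_n (p_1^{C^2})^*(\beta_n) F(e_2, \theta_3)^n$. Equating these two expressions gives a three-factor identity in $(\Phi^{C^2} X)\dbr{\theta_3}$. The identity of part~(ii) is then recovered by applying the $\delta$-expansion in the remaining variable $e_2$ to pass from $\Phi^{C^2} X$-coefficients to $(\Phi^C X)\dbr{\theta}$-coefficients, and renaming $\theta_3$ as $\xi$.

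The main obstacle is the rigorous uniqueness argument in part~(i): the target $X(1)\lr{\theta}\lr{\xi}$ is a double Laurent-series ring outside the immediate scope of Proposition~\ref{prop:power expansion}. Bridging this gap requires first comparing the two maps on an $(e_1, e_2)$-adically complete subring, where uniqueness applies, and then extending by localizing at $e_1$ and $e_2$, relying on functoriality of localization. A secondary difficulty in part~(ii) is establishing the precise formal-group-law relation $\delta_3(e_{p_2 p_3}) = F(e_2, \theta_3)$ for $\delta$-expansions in geometric fixed points, which requires adapting the $\alpha^\natural$ compatibility diagram from the proof of Theorem~\ref{thm:fgl} to this context.
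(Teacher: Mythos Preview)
Your overall strategy is correct and matches the paper's: both parts come from comparing two routes of expanding $t_\mu/a_\mu$, using Proposition~\ref{prop:power expansion} for uniqueness and the formal group identity $d^2(e_\mu)=F(e_1,e_2)$. But there are two places where your execution diverges from what actually works.

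For part~(i), your choice of target $X(1)\lr{\theta}\lr{\xi}$ creates the very difficulty you flag. The paper sidesteps this entirely by targeting $X(1)\lr{e_1}\dbr{e_2}$ instead: since $d^2(e_1)=e_1$ and $d^2(e_\mu)=F(e_1,e_2)$ both become units there (the latter because its constant term in $e_2$ is $e_1$), the map $d^2$ extends to $(t^{-1}X)(C^2)[e_1^{-1},e_\mu^{-1}]$ without ever inverting $e_2$. One then compares the two composites directly on $\Psi^C X$, where $e_2$ is a regular element with quotient $\Phi^C X$; the target is $(e_2)$-complete, so Proposition~\ref{prop:power expansion} applies immediately. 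No double-Laurent bookkeeping, no passage through a full common localization $R$, is needed. Your route can be made to work, but it is strictly harder than necessary.

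For part~(ii) there is a genuine gap. Your proposed map $(C\times\mu)^*:\Psi^C X\to\Psi^{C^2}X$ does not exist: $(C\times\mu)^*(t_2)=t_{23}$, and the analog of $\Psi^K X$ for $K=C^2$ inverts only $t_3$, not $t_{23}$. This is precisely why the paper introduces the bespoke ring
\[
\Xi(X)\ =\ X(C^3,\star)[a_1^{-1},a_{12}^{-1},a_{\bar\mu}^{-1},t_2^{-1},t_{23}^{-1},t_3^{-1}],
\]
whose inverted set is chosen asymmetrically so that all three of $p_{12}^*$, $(C\times\mu)^*$, and $(\mu\times C)^*$ extend from $\Psi^C X$ to $\Xi(X)$. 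The expansion is then done in two stages: first $\partial:\Xi(X)\to(\Psi^C X)\dbr{\xi}$ in the regular element $e_3$ (using the section $p_{12}^*$), then $\delta_C\dbr{\xi}$. Your claimed relation $\delta_3(e_{p_2p_3})=F(e_2,\theta_3)$ becomes, in the paper's setup, the statement that $(\delta_C\dbr{\xi}\circ\partial\circ(C\times\mu)^*)(e_2)=F(\theta,\xi)$, and this is proved by yet another uniqueness argument using an auxiliary map $p_{23}^*:(t^{-1}X)(C^2)\to\Xi(X)$ to link back to $d^2$. So the idea you sketch is right, but the intermediate ring must be chosen with considerably more care than the direct analog $\Psi^{C^2}X$ allows.
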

\begin{proof}
  (i)
  We specialize Construction \ref{con:d_K} to $K=\{1\}$.
  Then $\Psi^1 X=X(C,\star)[t^{-1}]=(t^{-1}X)(C)$,
  with the potentially confusing caveat
  that the class that is generically called $e_2$ in $\Phi^1 K$
  in Construction \ref{con:d_K} is called $e_1$ in $(t^{-1}X)(C)$.
  Under these identifications,
  the homomorphism $\delta_1:\Psi^1 X\to X(1)\dbr{\theta}$ coincides with 
  $d^1:(t^{-1}X)(C)\to X(1)\dbr{e_1}$ up to renaming of the variables.
  So for all $x\in X(C,*-k\sigma)$, the relation
  \begin{equation}\label{eq:d_1's}
      d^1(x/t^k)\cdot e_1^{-k}    \ = \     \delta_1(x/t^k)\td{e_1}\cdot e_1^{-k}
    \ = \ (\delta_1(x/t^k)\cdot \theta^{-k})\td{e_1}\ = \ d_1(x/a^k)\td{e_1}
  \end{equation}
  holds in $X(1)\lr{e_1}$. Here, and in what follows, we  write $\td{e_1}$
  to indicate a renaming of the variable from $\theta$ to $e_1$.

  The classes $d^2(e_1)=e_1$ and $d^2(e_\mu)=F(e_1,e_2)$ both become invertible
  in the ring $X(1)\lr{e_1}\dbr{e_2}$, so the power series expansion
  $d^2: (t^{-1}X)(C^2)\to  X(1)\dbr{e_1,e_2}$
  from Construction \ref{con:d^n} extends
  uniquely to a morphism of graded $X(1)$-algebras
  \[ d^2\ :\  (t^{-1}X)(C^2)[e_1^{-1},e_\mu^{-1}]\ \to \ X(1)\lr{e_1}\dbr{e_2}\ ,  \]
  for which we use the same name.

  The ring $(t^{-1}X)(C^2)[e_1^{-1},e_\mu^{-1}]$
  is the localization of $X(C^2,\star)$ obtained by inverting
  all $t$-classes and the classes $a_1$ and $a_\mu$.
  Since $\Psi^C X$ is a less drastic localization
  -- formed by inverting $t_2$, $a_1$ and $a_\mu$ --
  it affords a localization morphism
  $\ell:\Psi^C X\to (t^{-1}X)(C^2)[e_1^{-1},e_\mu^{-1}]$
  that is localization away from $t_1/a_1$ and $t_\mu/a_\mu$.
  This morphism is explicitly given by
  \[ \ell(x/(a_1^k a_\mu^l t_2^m)) \ = \ (x/(t_1^k t_\mu^l t_2^m)) \cdot e_1^{-k} e_\mu^{-l} \ . \]
  We claim that the following diagram of graded rings commutes:
  \[\xymatrix@C=10mm{
      \Psi^C X\ar[rr]^-\ell\ar[d]_{\delta_C}&&
      (t^{-1}X)(C^2)[e_1^{-1},e_\mu^{-1}] \ar[d]^{d^2}\\
      (\Phi^C X)\dbr{\theta}\ar[r]_-{(-)\td{e_2}}^-\iso &  (\Phi^C X)\dbr{e_2}\ar[r]_-{d_1\dbr{e_2}}&
      X(1)\lr{e_1}\dbr{e_2}
    }\]
  Indeed, on the one hand, the rectangle commutes after precomposition with
  $p_1^*:\Phi^C X\to\Psi^C X$: the two composites send $p_1^*(x/a^k)$,
  for $x/a^k\in \Phi^C X$, to $d_1(x/a^k)\td{e_1}$ and  $d^1( x/t^k)\cdot e_1^{-k}$,
  respectively, which agree by \eqref{eq:d_1's}. 
  On the other hand, both composites in the rectangle send the regular generator $e_2$
  of the kernel of $i_1^*:\Psi^C X\to\Phi^C X$ to the class with the same name.
  So the rectangle commutes by  Proposition \ref{prop:power expansion} (ii).  
  Now we can prove the claim: 
  \begin{align*}
    F(e_1,e_2)\cdot \sum_{n\geq 0}\,  d_1(\beta_n)(e_1)\cdot e_2^n \
    &= \  d^2(e_\mu)\cdot d_1\dbr{e_2}(\delta_C(t_\mu/a_\mu)\td{e_2})\\
    &= \  d^2(e_\mu)\cdot  d^2(\ell(t_\mu/a_\mu))\ = \
    d^2(e_\mu\cdot e_\mu^{-1})\ = \ 1 
  \end{align*}
  This is the desired relation, up to renaming $e_1$ and $e_2$ to $\theta$ and $\xi$.
  
  (ii)
  We consider the $\mZ$-graded ring
  \[ \Xi(X)\ = \ X(C^3,\star)[a_1^{-1},a_{12}^{-1},a_{\bar\mu}^{-1},t_2^{-1},t_{23}^{-1},t_3^{-1}] \ ,\]
  the integer graded subring of the localization
  of the $I_{C^3}$-graded ring $X(C^3,\star)$ by inverting

 \begin{itemize}
  \item 
  the $a$-classes for the projection to the first factor, the sum of the first two factors,
  and the total multiplication $\bar\mu:C^3\to C$,
\item
  and the $t$-classes for the sum of the last two factors
  and the projections to the second and to the third factor.
\end{itemize}
  The restriction $i_{12}^*:X(C^3,\star)\to X(C^2,\star)$
  to the first two factors satisfies
  \[ i_{12}^*(a_1)\ = \ a_1\ , \quad
    i_{12}^*(a_{12})\ = \ i_{12}^*(a_{\bar\mu})\ = \ a_\mu\ , \quad
    i_{12}^*(t_2)\ = \  i_{12}^*(t_{23})\ = \ t_2\text{\qquad and\qquad}
    i_{12}^*(t_3)\ = \ 1\ ;  \]
  so it descends to a homomorphism of localizations
  \[ i_{12}^*\ :\ \Xi(X)\ \to \ \Psi^C X\ , \]
  where $\Psi^C X$ was defined in \eqref{eq:define_Psi}.
  The morphisms
  \[
    p_{12}^*\ , \ (C\times \mu)^*\ , \ (\mu\times C)^*\ :\ X(C^2,\star)\ \to\  X(C^3,\star) 
  \]
  satisfy
  \begin{align*}
    p_{12}^*(a_1)\ &= \ (C\times \mu)^*(a_1)\ = \ a_1\ ,\quad
                     p_{12}^*(a_\mu)\ = \ (\mu\times C)^*(a_1)\ = \
                     a_{12}\ ,\quad
    p_{12}^*(t_2)\ = \ t_2 \\
        (C\times \mu)^*&(a_\mu)\ = \ a_{\bar\mu}\text{\qquad and\qquad}
    (C\times \mu)^*(t_2)\ = \ t_{23}\\  
    (\mu\times C)^*&(a_\mu)\ = \ a_{\bar\mu}\text{\qquad and\qquad}
                     (\mu\times C)^*(t_2)\ = \ t_3\ .
  \end{align*}
  So all three extend to the localizations and yield homomorphisms
  \[
    p_{12}^*\ , \ (C\times \mu)^* \ , \ (\mu\times C)^* \  :\ \Psi^C X\ \to\  \Xi(X)\ .
  \]
  Because $(C\times\mu)\circ i_{12}=p_{12}\circ i_{12}=\Id$,
  the first two of these homomorphisms are sections to $i_{12}^*$.
  Because localization is exact, the class $e_3=a_3/t_3$ is a regular element of $\Xi(X)$
  that generates the kernel of $i_{12}^*$.
  We let 
  \[ \partial \ : \ \Xi(X)\ \to \  (\Psi^{C}X)\dbr{\xi} \]
  be the power series expansion, in the sense of Definition \ref{def:series_expansion},
  of $\Xi(X)$ with respect to the class $e_3$ and the section $p_{12}^*$.
  We will show that the three subdiagrams of the following
  large diagram of graded rings commute:
\[ 
 \xymatrix@C=12mm
    {   
    &   \Psi^C X \ar[rr]^-{\delta_C} \ar[d]^{(C\times \mu)^*} &&
   (\Phi^C X)\dbr{\theta} \ar@/^1pc/[dr]^-{F_*}&\\
  \Phi^C X\ar@/^1pc/[ur]^{\mu^*}\ar@/_1pc/[dr]_{\mu^*}  &\Xi(X) \ar[rr]^-{\partial} &&
      (\Psi^C X)\dbr{\xi} \ar[r]^-{\delta_C\dbr{\xi}} &
         (\Phi^C X)\dbr{\theta,\xi}\\ 
         &  \Psi^C X\ar[u]_{(\mu\times C)^*} \ar[r]_-{\delta_C}
         & (\Phi^C X)\dbr{\theta} \ar[r]^-\iso_-{\td{\xi}}&(\Phi^C X)\dbr{\xi}\ar[u]_{\mu^*\dbr{\xi}}
  }
 \]
 Here $F_*$ is the graded $\Phi^C X$-algebra map that
 sends $\theta$ to the formal group law $F(\theta,\xi)$.
 And we continue to use pointy brackets $\td{-}$ to indicate isomorphisms that
 rename variables.
 The left subdiagram with target $\Xi(X)$
 commutes by the relation $\mu\circ(C\times \mu)=\mu\circ(\mu\times C)$.

  Now we show the commutativity of the upper subdiagram featuring $F_*$.
  We appeal to the uniqueness statement in  Proposition \ref{prop:power expansion} (ii).
  After precomposition with the homomorphism $p_1^*:\Phi^C X\to \Psi^C X$,
  both composites in the upper part become the homomorphism
  $-\cdot 1:\Phi^C X\to (\Phi^C X)\dbr{\theta,\xi}$, the inclusion of the coefficient ring.
  We claim that both composites in the upper subdiagram also agree on $e_2$.
  The inflation homomorphism
  \[  p_{23}^*\ : \ X(C^2,\star)\ \to \ X(C^3,\star) \]
  along the projection to the last two factors satisfies
  \[     p_{23}^*(t_1)\ = \ t_2\ ,\quad
    p_{23}^*(t_\mu)\ = \ t_{23}\ ,\quad
    p_{23}^*(t_2)\ = \ t_3 \ ,
  \]
  and thus passes to a homomorphism from the localization away from $t$ of the source to $\Xi(X)$.
  The composite 
  \begin{equation}\label{eq:tinv2e_2e_3}
    (t^{-1}X)(C^2)\
    \xra{\ p_{23}^*\ }\  \Xi(X) \ \xra{\ \partial\ }\  (\Psi^C X)\dbr{\xi} 
    \ \xra{\delta_C\dbr{\xi}} \
      (\Phi^C X)\dbr{\theta,\xi}    
  \end{equation}
  is a morphism of $X(1)_*$-algebras, and it sends the classes $e_1$ and $e_2$
  to $\theta$ and $\xi$, respectively.
  Since the regular sequence $(e_1,e_2)$ generates the augmentation ideal of $(t^{-1}X)(C\times C)$,
  the uniqueness result from Proposition \ref{prop:power expansion} (ii)
  shows that the composite \eqref{eq:tinv2e_2e_3}  agrees with
  \[      (t^{-1}X)(C^2)\ \xra{\ d^2\ }\ 
    X(1)\dbr{e_1,e_2}\  \xra[\iso]{\td{\theta,\xi}}\ 
    X(1)\dbr{\theta,\xi}\  \xra{\ p^*\dbr{\theta,\xi}\ }\ (\Phi^C X)\dbr{\theta,\xi} \ . \]
  Here $p^*:X(1)\to \Phi^C X$ is the inflation homomorphism.
  So these two homomorphisms also agree on the class $e_\mu$.
  Because
  \[ p_2\circ (C\times\mu)\ = \  \mu\circ p_{23}\ : \ C^3 \ \to \ C\ ,\]
  we have $(C\times\mu)^*(e_2)=p_{23}^*(e_\mu)$ in $\Xi(X)$.
  Thus
  \begin{align*}
    \delta_C\dbr{\xi}(\partial((C\times\mu)^*(e_2)))\
    &= \ \delta_C\dbr{\xi}(\partial(p_{23}^*(e_\mu)))\ = \
      p^*\dbr{\theta,\xi}( d^2(e_\mu)\td{\theta,\xi})\\
    &= \ p^*\dbr{\theta,\xi}(F(\theta,\xi))\ = \ F_*(\theta)\ = \ F_*(d_C(e_2))\ . 
  \end{align*}
  Since $e_2$ is a regular element that generates the kernel of $i_1^*:\Psi^C X\to\Phi^C X$,
  the uniqueness result from  Proposition \ref{prop:power expansion} (ii)
  shows that the upper part of the diagram commutes.

  It remains to show the commutativity of the lower subdiagram,
  and one more time we appeal to Proposition \ref{prop:power expansion} (ii).
  This time both composites with
  the homomorphism $p_1^*:\Phi^C X\to \Psi^C X$ become the composite
  \[ \Phi^C X\ \xra{\ \mu^* \ }\  \Psi^C X\ \xra{-\cdot 1}\  (\Psi^C X)\dbr{\xi}\ ,  \]
  and both composites send the regular element $e_2$ to $\xi$.
  So the lower part of the big diagram commutes, too.

  We can now prove the desired relation:
  \begin{align*}
    \sum_{n\geq 0} d_C(\mu^*(\beta_n))\cdot \xi^n\
       &= \ (d_C\circ\mu^*)\dbr{\xi}(\sum_{n\geq 0}\beta_n\cdot\xi^n)\
    = \ (\delta_C\circ\mu^*)\dbr{\xi}( \delta_C(t_\mu/a_\mu)\td{\xi} )\\
     & = \ F_*(\delta_C(t_\mu/a_\mu))\
    = \ F_*(\sum_{n\geq 0} \beta_n\cdot \theta^n)\
     = \ \sum_{n\geq 0} \beta_n\cdot F(\theta,\xi)^n \ .
  \end{align*}
  The third equality is the commutativity of the big diagram, applied to the element $t/a$.
\end{proof}

We conclude this section with a criterion for when a system of morphisms
of geometric fixed point rings arises from a morphism of oriented $\elRO$-algebras.
This will be used in the proof of our main result, Theorem \ref{thm:main},
showing that equivariant bordism is an initial  oriented $\elRO$-algebra.

We let $\el_2^{\epi}$ denote the category whose objects are all elementary abelian 2-groups,
and whose morphisms are surjective group homomorphisms.
An {\em $\el_2^{\epi}$-algebra} is a contravariant functor from $\el_2^{\epi}$ to
the category of commutative $\mZ$-graded $\mF_2$-algebras.
For every oriented $\elRO$-algebra $X$, the collection of geometric fixed point
rings $\Phi^A X$ supports inflation homomorphisms \eqref{eq:inflation_geometric}
along morphisms in $\el^{\epi}_2$, so they naturally form an $\el_2^{\epi}$-algebra
$\Phi^\bullet X$.
Geometric fixed points are often easier to compute than the $\elRO$-algebras themselves.
Therefore the following theorem will be useful
to construct morphisms of orientable $\elRO$-algebras through their geometric fixed points.

\begin{theorem}\label{thm:fix_criterion}
  Let $(X,a,t)$ and $(Y,\bar a,\bar t)$ be oriented $\elRO$-algebras.
  Let $f^\bullet=\{f^A\}_A:\Phi^\bullet X\to\Phi^\bullet Y$ be a morphism of $\el_2^{\epi}$-algebras.
  Suppose that the following two conditions are satisfied:
  \begin{itemize}
    \item[(a)] $f^C(t/a)=\bar t/\bar a$
    \item[(b)] For every elementary abelian 2-group $K$, the following diagram commutes:
    \[ \xymatrix@C=15mm{
        \Phi^{K\times C} X\ar[r]^-{f^{K\times C}}\ar[d]_{d_K} & \Phi^{K\times C}Y\ar[d]^{d_K} \\
        (\Phi^K X)\lr{\theta}\ar[r]_-{f^K\lr{\theta}} & (\Phi^K Y)\lr{\theta}
      } \]
    \end{itemize}
    Then there exists a morphism $f\colon X\to Y$
    of oriented $\elRO$-algebras such that $\Phi^A f=f^A$
    for all elementary abelian $2$-groups $A$.
  \end{theorem}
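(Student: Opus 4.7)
The plan is to construct $f$ by restricting each ring homomorphism $f^A$ along the injective embedding $X(A, k-V) \hookrightarrow \Phi^A_k X$, $x \mapsto x/a_V$ (where $m = k - V$ uniquely with $V^A = 0$). The central task is to show that $f^A$ sends this image into the analogous image of $Y(A, k-V) \hookrightarrow \Phi^A_k Y$; once this is established, injectivity of the target embedding provides a unique map $f(A, k-V)$, and the collection assembles into a morphism of oriented $\elRO$-algebras with $\Phi^A f = f^A$ by construction.

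The main obstacle will be a generalization of Theorem \ref{thm:detection}: for $z \in \Phi^A_k X$, $z$ lies in the image of $X(A, k-V) \hookrightarrow \Phi^A_k X$ if and only if for every isomorphism $\alpha : K \times C \iso A$, the Laurent series $d_K(\alpha^*(z))$ has $\theta$-valuation at least $-m_\alpha(V)$, where $m_\alpha(V)$ denotes the multiplicity of $p_2$ in $\alpha^*(V)$. The forward implication is immediate from Construction \ref{con:d_K}: decomposing $\alpha^*(V) = U \oplus m_\alpha(V) p_2$ with $U^K = 0$, one has $d_K(\alpha^*(x/a_V)) = \delta_K(\alpha^*(x)/(a_U t_2^{m_\alpha(V)})) \cdot \theta^{-m_\alpha(V)}$, and $\delta_K$ produces power series. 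For the reverse direction, write $z = y/a_W$ with $y \in X(A, k-W)$. For each nontrivial $A$-character $\lambda$ with $w_\lambda > v_\lambda$, the valuation hypothesis yields that $\alpha^*(y)/(a_U t_2^{w_\lambda})$ is divisible by $e_2^{w_\lambda - v_\lambda}$ in $\Psi^K X$; following the argument of Theorem \ref{thm:detection}, this implies $y$ is divisible by $a_\lambda^{w_\lambda - v_\lambda}$ in $X(A, \star)$ via Proposition \ref{prop:coprime}(i) using the non-zero-divisor $\res^A_K(t_\lambda^m) = 1$. Combining these divisibilities through Proposition \ref{prop:coprime}(ii) gives $y = y'' \cdot a_{W - W \wedge V}$, and $z = (y'' \cdot a_{V - W \wedge V})/a_V$ exhibits $z$ in the image of $X(A, k-V)$.

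With this in hand, preservation of images under $f^A$ follows at once: naturality of $f^\bullet$ under the isomorphism $\alpha$ (which lies in $\el_2^{\epi}$) combined with hypothesis (b) gives $d_K(\alpha^*(f^A(z))) = f^K\lr{\theta}(d_K(\alpha^*(z)))$, and the coefficient-wise action of $f^K\lr{\theta}$ cannot decrease $\theta$-valuation. Additivity, multiplicativity, and unitality of the resulting $f$ are inherited from $f^A$; preservation of the pre-Euler class follows from $a/a = 1$ together with $f^C(1) = 1$, while preservation of the inverse Thom class uses hypothesis (a) combined with naturality under surjective characters. Compatibility with restriction along surjective group homomorphisms comes directly from the $\el_2^{\epi}$-algebra structure. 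The remaining delicate point will be compatibility with an inclusion $i : B \hookrightarrow A$, which after choosing a splitting $A \cong B \times C^r$ reduces to iterating the case of first-factor inclusions $i_1 : K \to K \times C$. For an integer-graded $x \in X(K \times C)_k$, evaluating hypothesis (b) at $z = x/1$ and extracting the constant coefficient of $\theta$ (which via $\delta_K$ equals the image of $i_1^*$) yields $i_1^*(f^{K \times C}(x/1)) = f^K(i_1^*(x)/1)$, whence $i_1^* \circ f(K \times C)_k = f(K)_k \circ i_1^*$ by injectivity of the effective embedding. Compatibility on all of $X(K \times C, \star)$ then follows from Proposition \ref{prop:generation}(ii) and the already-established compatibility of $f$ with $a$- and $t$-classes.
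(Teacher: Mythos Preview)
Your proposal is correct, and follows essentially the same underlying mechanism as the paper's proof, but the organization differs in one noteworthy way. The paper first uses Theorem \ref{thm:detection} only in the case $V=0$ to show that $f^A$ preserves \emph{effective} classes, giving maps $f(A)_*:X(A)_*\to Y(A)_*$ on integer degrees; it then extends these to all of $X(A,\star)$ by writing elements as polynomials in $a_\lambda,t_\lambda$ with integer-graded coefficients (via Proposition~\ref{prop:generation}) and verifying, in a separate step, that the result is independent of the chosen polynomial representation. Your route instead proves a generalized detection criterion characterizing the image of each $X(A,k-V)\hookrightarrow\Phi^A_k X$ by a $\theta$-valuation bound, and then defines $f(A,k-V)$ in one stroke. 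This bypasses the well-definedness check for the polynomial extension, at the modest cost of stating and proving the more general criterion (whose proof, as you note, is the same as that of Theorem~\ref{thm:detection} mutatis mutandis). The remaining ingredients---compatibility with surjections via the $\el_2^{\epi}$-structure, and compatibility with $i_1:K\to K\times C$ via the constant coefficient of $d_K$ together with Proposition~\ref{prop:generation}(ii)---are handled identically in both arguments.
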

  \begin{proof}
  We start by observing 
  \[ f^A(t_\lambda/a_\lambda)\ = \ f^A(\lambda^*(t/a))\ =_{\textrm{(b)}} \
    \lambda^*(f^C(t/a))\ =_{\textrm{(a)}} \ \lambda^*(\bar t/\bar a)\ = \ \bar t_\lambda/\bar a_\lambda  \]
  for all nontrivial characters $\lambda$ of an elementary abelian 2-group $A$.
  
  We recall from Definition \ref{def:effective} that a class $y\in \Phi_k^A X$
  is effective if it lies in the image of the homomorphism $-/1:X(A)_k\to \Phi_k^A X$,
  and that the maps $-/1:X(A)_*\to \Phi_*^A X$
  form an isomorphism from the $\mZ$-graded part of $X(A)$ to the subring of effective classes.
  \smallskip

  Claim 1: for every elementary abelian 2-group $A$, the morphism $f^A:\Phi^A X\to\Phi^A Y$
  takes effective classes to effective classes.
  
  To prove this claim we let $y\in \Phi^A X$ be an effective class,
  and we let $\alpha:K\times C\iso A$ be an isomorphism.
  Because the $f$-maps commute with inflation along $\alpha$ , we have
  \[  d_K(\alpha^*(f^A(y))) \ = \  d_K(f^{K\times C}(\alpha^*(y))) \
    =_{\textrm{(b)}} \    f^K\lr{\theta}(d_K(\alpha^*(y))) \ . \]
  By Theorem \ref{thm:detection}, the class $d_K(\alpha^*(y))$
  in $(\Phi^K Y)\lr{\theta}$ is integral. 
  Since $f^K\lr{\theta}:(\Phi^K X)\lr{\theta}\to (\Phi^K Y)\lr{\theta}$
  preserves integrality, the class $d_K(\alpha^*(f^A(y)))$ in $(\Phi^K Y)\lr{\theta}$ is integral. 
  Again by Theorem \ref{thm:detection}, the class $f^A(y)$ is effective.\smallskip
  
  Because the ring homomorphism $f^A$ takes effective classes to effective classes,
  there are unique maps $f(A)\colon X(A)_*\to Y(A)_*$ that make the diagram
  \[ \xymatrix@C=12mm{ X(A)_* \ar[d]_{-/1}\ar[r]^{f(A)} & Y(A)_* \ar[d]^{-/1} \\
      \Phi_*^A X \ar[r]_{f^A} & \Phi_*^A Y}
  \]
  commute.
  These maps are homomorphisms of graded rings and commute with inflations
  because this is the case for the other three
  morphisms in the commutative diagram, and because the vertical maps are injective.
  \smallskip

  Claim 2: For every monomorphism $i:K\to A$ between elementary abelian 2-groups, the
  following square commutes:
  \[\xymatrix@C=12mm{X(A)_*\ar[r]^-{f(A)}\ar[d]_{i^*}& Y(A)_*\ar[d]^{i^*} \\
      X(K)_*\ar[r]_-{f(K)} & Y(K)_*   }\]
  It suffices to show this when $i(K)$ has index 2 in $A$, and then we may assume
  without loss of generality that $A=K\times C$ and $i=i_1:K\to K\times C$ is
  the embedding as the first summand.
  For $u\in X(K\times C)_*$ the relation
  \[ 
    f^K\lr{\theta}(d_K(u/1))\ =_{\textrm{(b)}} \  d_K(f^{K\times C}(u/1)) \ = \ d_K(f(K\times C)(u)/1)
  \]
  holds in the ring $(\Phi^K_* Y)\lr{\theta}$.
  Since both sides of this equation are integral classes by Theorem \ref{thm:detection},
  the relation already holds in the ring $(\Phi^K_* Y)\dbr{\theta}$;
  in particular, the constant terms of both sides agree, and we obtain
    \begin{align*}
    ( f(K)(i_1^*(u))/1 \
    &= \     f^K(i_1^*(u)/1) \ = \  f^K(d_K(u/1)(0)) \\
    &= \  f^K\lr{\theta}(d_K(u/1))(0) \\
      _{\textrm{(b)}}&= \ d_K(f(K\times C)(u)/1)(0)\ = \ i_1^*(f(K\times C)(u))/1 \ .
  \end{align*}
  Because the map $-/1$ is injective, this concludes the proof of Claim 2.\smallskip

  We can now define the desired morphism of $\elRO$-algebras $f:X\to Y$.  
  We let $W$ be a representation of an elementary abelian 2-group $A$ with $W^A=0$.
  By Proposition \ref{prop:generation},
  every class $x\in X(A,k-W)$ is a homogeneous polynomial with coefficients in
  $X(A)_*$ of degree $k-W$ in the classes $a_\lambda$ and $t_\lambda$,
  for all nontrivial $A$-characters $\lambda$.
  In other words, there is a homogeneous polynomial $g\in X(A)_*[\alpha_\lambda,\tau_\lambda]$
  in variables $\alpha_\lambda$ and $\tau_\lambda$
  of degree $-\lambda$ and $1-\lambda$, respectively, such that
  $x=g(a_\lambda,t_\lambda)$, i.e., substituting the variables 
  $\alpha_\lambda$ and $\tau_\lambda$ by the classes $a_\lambda$ and $t_\lambda$ yields $x$.
  We write $f(A)_\flat(g)\in Y(A)_*[\alpha_\lambda,\tau_\lambda]$
  for the effect of applying the graded ring homomorphism
  $f(A):X(A)_*\to Y(A)_*$ to the coefficients of the polynomial $g$.
  The classes $\bar a_\lambda$ and $\bar t_\lambda$ of the $\elRO$-algebra $Y$
  can then be substituted for the variables in $f(A)_\flat(g)$,
  yielding a class in $Y(A,k-W)$.
  There is no reason why the polynomial $g$ that presents the class $x$ should be unique,
  so we shall need:\smallskip
  
  Claim 3: The class $f(A)_\flat(g)(\bar a_\lambda,\bar t_\lambda)$ in $Y(A,k-W)$
  is independent of how $x=g(a_\lambda,t_\lambda)$ is presented as a polynomial
  in the classes $a_\lambda$ and $t_\lambda$.

  For every homogeneous polynomial $h \in Y(A)_*[\alpha_\lambda,\tau_\lambda]$
  of the appropriate degree we have
  \[    h(\bar a_\lambda,\bar t_\lambda)/\bar a_W
    \ = \ h(1,\bar t_\lambda/\bar a_\lambda)\]
  in $\Phi^A_k Y$, i.e., the class  $h(\bar a_\lambda,\bar t_\lambda)/\bar a_W$
  is obtained by substituting the variables $\alpha_\lambda$ and $\tau_\lambda$ by
  the multiplicative unit $1$ and by $\bar t_\lambda/\bar a_\lambda$, respectively.
  For $h=f(A)_\flat(g)$, this yields
  \begin{align*}
    f(A)_\flat(g)(\bar a_\lambda,,\bar t_\lambda)/\bar a_W\
    &= \    f(A)_\flat(g)(1,\bar t_\lambda/\bar a_\lambda)\
      = \ f(A)_\flat(g)(1,f^A(t_\lambda/a_\lambda)) \\
    &= \     f^A(g(1,t_\lambda/a_\lambda)) \
     \    f^A(g(a_\lambda ,t_\lambda)/a_W) \ = \   f^A(x/a_W) \ .   
  \end{align*}
   This geometric fixed point class
   is thus independent of how $x$ is presented 
   as a polynomial in the classes $a_\lambda$ and $t_\lambda$.
   Since the map  $-/\bar a_W:Y(A,k-W)\to\Phi^A_k Y$ is injective,
   this proves Claim 3.\smallskip

   With Claim  3 established, we can now define a map
   $f(A,k-W):X(A,k-W)\to Y(A,k-W)$ by 
   \[ f(A,k-W)(x)\ = \ f(A)_\flat(g)(\bar a_\lambda,\bar t_\lambda) \ ,\]
   where $g\in X(A)_*[\alpha_\lambda,\tau_\lambda]$ is any homogeneous polynomial
   of degree $k-W$ such that $x=g(a_\lambda,t_\lambda)$.
   With the independence of $g$ at hand, it is then straightforward to 
   shows that these assignments define an $I_A$-graded ring homomorphism.
   And clearly, the map $f(A,-\lambda):X(A,-\lambda)\to Y(A,-\lambda)$
   sends $a_\lambda$ to $\bar a_\lambda$,
   and the map $f(A,1-\lambda):X(A,1-\lambda)\to Y(A,1-\lambda)$
   sends $t_\lambda$ to $\bar t_\lambda$.
   
   Now we check that for varying $A$, these homomorphisms of $I_A$-graded rings
   are compatible with restriction along homomorphisms $\beta:B\to A$
   of elementary abelian 2-groups.
   We already argued above that the relation $\beta^*\circ f(A,\star)=f(B,\star)\circ \beta^*$
   holds on the integer graded subrings.
   The $I_A$-graded ring $X(A,\star)$ is generated by its integer graded
   subring and the classes  $a_\lambda$ and $t_\lambda$ for all nontrivial $A$-characters $\lambda$,
   by Proposition \ref{prop:generation} (ii).
   So it suffices to show that the ring homomorphisms
   $\beta^*\circ f(A,\star)$ and $f(B,\star)\circ \beta^*$
   also agree on the classes $a_\lambda$ and $t_\lambda$.
   We distinguish two cases. If the $B$-character $\lambda\beta$ is nontrivial,
   then $\beta^*(a_\lambda)=a_{\lambda\beta}$,
   and we obtain
   \[ \beta^*(f(A,-\lambda)(a_\lambda))\
     =\ \beta^*(\bar a_\lambda) \ = \ \bar a_{\lambda\beta} \
     =\   f(B,-(\lambda\beta))(a_{\lambda\beta})\ = \
     f(B,-(\lambda\beta))(\beta^*(a_\lambda))\ . \]
   The argument for the $t$-classes is the same, mutatis mutandis.
   If the $B$-character $\lambda\beta$ is trivial, we need to argue differently.
   In this case $\beta^*(a_\lambda)=0$ in $X$, and
   $\beta^*(\bar a_\lambda)=0$ in $Y$, and the relation 
   $\beta^*(f(A,-\lambda)(a_\lambda))= f(B,-1)(\beta^*(a_\lambda))$
   holds as both sides are zero.
   Also in this case, $\beta^*(t_\lambda)=1$ in $X$
   and $\beta^*(\bar t_{\lambda})=1$ in $Y$, because $t$ and $\bar t$ are inverse Thom classes.
   Hence  $\beta^*(f(A,1-\lambda)(t_\lambda))$ and
   $f(B,0)(\beta^*(t_\lambda))$ are both the multiplicative unit 1.
   This finishes the proof.
\end{proof}

\section{Equivariant bordism}\label{sec:bordism}

In this section we introduce the oriented $\elRO$-algebra $\cN$ of {\em equivariant bordism},
and we collect various basic properties.
The main result of this paper is the fact that $\cN$ is an initial 
oriented $\elRO$-algebra, see Theorem \ref{thm:main} below.

\begin{construction}
  Given a finite group $G$ and a based $G$-space $X$,
  we write $\widetilde{\cN}^G_k(X)$ for the $k$-th reduced $G$-equivariant bordism group of $X$.
  We refer to \cite[\S 2]{stong:unoriented_finite} or \cite[Section 6.2]{schwede:global}
  for a detailed background on equivariant bordism.

  For an elementary abelian 2-group $A$, we write a given grading $m\in I_A$
  as $m=k-V$ for some integer $k$ and an $A$-representation $V$ with trivial fixed points.
  Then we set
  \[ \cN(A,m) \ = \ \cN(A,k-V) \ = \ \widetilde \cN_k^A(S^V)\ ,\]
  the reduced $A$-equivariant bordism group of dimension $k$
  of the onepoint compactification $S^V$. 
  A key point is that this definition is independent up to preferred isomorphism
  of the choice of $V$.
  Indeed,  an isomorphism of $A$-representations
  $\psi:V\to W$ induces an equivariant homeomorphism $S^\psi:S^V\to S^W$,
  and hence an isomorphism of bordism groups
  \[ \widetilde \cN_k^A(S^\psi) \ : \ \widetilde \cN_k^A(S^V)\ \to\
    \widetilde \cN_k^A(S^W)\ .  \]
  Theorem \ref{thm:oriented consequences} (iii)
  and the fact that equivariant bordism is represented
  by an orientable global ring spectrum (Example \ref{eg:bordism_versus_mO})
  guarantee that the isomorphism does not depend on the choice of $\psi$.
  Equivariant bordism is concentrated in non-negative integer degrees,
  so the group $\cN(A,k-V)$ is trivial whenever $k$ is negative.

  To define the functoriality we recall the suspension isomorphism in equivariant bordism.
  A specific class $d\in \widetilde{\cN}_1^A(S^1)$ is represented by
  the identity of $S^1$.
  The suspension homomorphism
  \[ -\sm d \ : \ \widetilde{\cN}^A_k(X) \ \to \ \widetilde{\cN}^A_{k+1}(X\sm S^1) \]
  is given by reduced product with the class $d$; it is an isomorphism,
  for example by \cite[Proposition 6.2.11]{schwede:global}.
  
  Now we let $\alpha:(B,\alpha^*(m))\to (A,m)$ be a morphism in the category $\elRO$.
  We choose an $A$-representation $V$ and a $B$-representation $W$
  so that $m=k-V$ and $\alpha^*(m)=k-\alpha^*(V)=l-W$ for integers $k\geq l$.
  There is then an isomorphism of $B$-representations
  \[ \psi\ :\     \alpha^*(V)\ \xra{\iso} \ W\oplus \mR^{k-l} \ . \]
  We define
  \[ \alpha^* \ : \  \cN(A,m) \ \to \ \cN(B,\alpha^*(m)) \]
  as the composite
  \[ \widetilde \cN_k^A(S^V) \ \xra{\ \alpha^* \ } \
    \widetilde \cN_k^B(S^{\alpha^*(V)})\ \xra[\iso]{\widetilde{\cN}_k^B(S^\psi)}\
    \widetilde \cN_k^B(S^{W\oplus \mR^{k-l}}) \ \xla[\iso]{-\sm d^{k-l}}\
        \widetilde \cN_l^B(S^W)\ .  \]
  The first homomorphism is restriction of the action on manifolds and bordisms
  along the homomorphism $\alpha$. The homomorphism decorated $-\sm d^{k-l}$
  is an iterated suspension isomorphism.
  Again by Theorem \ref{thm:oriented consequences} (iii),
  the definition of $\alpha^*$ does not depend on the choice of $\psi$;
  this independence is also needed in verifying that the assignment is compatible with composition.

  The pre-Euler class 
  \[ a\ \in\  \cN(C,-\sigma) \ = \ \widetilde{\cN}^C_0(S^\sigma) \]
  is represented by the inclusion  of the isolated fixed point $\{0\}\to S^\sigma$.
  The inverse Thom class
  \[ t\ \in\  \cN(C,1-\sigma) \ = \ \widetilde{\cN}^C_1(S^\sigma) \]
  is represented by the identity of $S^\sigma$,
  with equivariant smooth structure through its identification with $S(\mR\oplus\sigma)$
  by the stereographic projection.
\end{construction}

\begin{eg}[Equivariant bordism versus the global Thom spectrum $\bmO$]\label{eg:bordism_versus_mO}
  The global Thom spectrum $\bmO$ is defined in \cite[Example 6.1.24]{schwede:global};
  the underlying $G$-equivariant homotopy type of $\bmO$ has been considered before,
  often in different language and notation, for example in \cite[Section 5]{tomdieck:orbittypenII}.
  The spectrum $\bmO$ is the orthogonal spectrum whose $V$-th term is
  \[ \bmO(V)\ = \ T h(G r_{|V|}(V\oplus\mR^\infty)) \ ,\]
  the Thom space of the tautological vector bundle
  of $|V|$-dimensional subspaces of $V\oplus\mR^\infty$.
  We refer to \cite[Example 6.1.24]{schwede:global}
  for the definition of the structure maps and the $E_\infty$-multiplication on $\bmO$.
  
  For every compact Lie group $G$,
  the equivariant Pontryagin-Thom construction provides a
  natural transformation of $G$-equivariant homology theories from $G$-equivariant bordism
  to the $G$-homology theory represented by the global Thom spectrum $\bmO$,
  see \cite[Construction 6.2.27]{schwede:global}.
  If the group $G$ is a product of a finite group and a torus, then
  the equivariant Pontryagin-Thom construction is an isomorphism.
  This results goes back to Wasserman \cite{wasserman},
  a homotopy theoretic proof for finite groups was given by
  tom Dieck \cite[Satz 5]{tomdieck:orbittypenII},
  and a proof in the general case can be found in \cite[Theorem 6.2.33]{schwede:global}.
  The reader should beware that the equivariant Pontryagin-Thom construction is
  provably {\em not} an isomorphism beyond this class of groups;
  for $G=S U(2)$, the homotopy theoretic transfer from the maximal torus normalizer to $S U(2)$
  yields a class in $\pi_0^{S U(2)}(\bmO)$
  that is not in the image, see \cite[Remark 6.2.34]{schwede:global}.

  In any case, the Thom-Pontryagin construction is in particular an isomorphism
  \[ \Theta^A(X) \ : \ \widetilde{\cN}_*^A(X)\ \xra{\ \iso \ }\ \bmO_*^A(X)\]
  for every elementary abelian 2-group $A$ and every based $A$-CW-complex $X$.
  These isomorphisms are furthermore compatible with products and restriction along
  group homomorphisms, compare \cite[Theorem 6.2.31]{schwede:global}.
  Specializing to the case where $X=S^V$ is the onepoint compactification of an $A$-representation
  shows that the Thom-Pontryagin construction is an isomorphism
  of $\elRO$-algebras
  \[ \Theta\ : \ \cN \ \xra{\ \iso \ }\ \bmO^\sharp\ .\]
  Both in $\cN$ and in $\bmO^\sharp$, the pre-Euler class $a$ is
  obtained by applying the map of based $C$-spaces $S^0\to S^\sigma$
  to the multiplicative unit in the $C$-equivariant coefficients.
  Since the Thom-Pontryagin construction is natural in based equivariant maps,
  the pre-Euler classes of $\cN$ and $\bmO^\sharp$ match up.
\end{eg}

Several times in this paper we will need the identification
of the geometric fixed points of equivariant bordism in terms of fixed point data.
This technique, like several others in the subject, goes back to
the pioneering work of Conner and Floyd.

\begin{construction}[Geometric fixed points of equivariant bordism]
  We recall the description of the geometric fixed point rings of
  equivariant bordism in terms of non-equivariant bordism groups.
  We will need it for elementary abelian 2-groups $A$,
  but this fixed point description works in the generality of compact Lie groups,
  see for example \cite[Proposition 6.2.16]{schwede:global}.
  The result takes the form of an isomorphism of graded rings:
  \begin{equation} \label{eq:geometric fix iso}
    \varphi_A \ :\   \Phi^A_n\cN \ \xra{\ \iso \ } \ {\bigoplus}_{j\geq 0}\ \cN_{n-j}(G r_j^{A,\perp} )
  \end{equation}
  Here $G r_j^{A,\perp}$ is the moduli space of $j$-dimensional $A$-representations
  with trivial fixed points, and $\cN_{n-j}(G r_j^{A,\perp} )$ is its non-equivariant bordism group.
  More formally, $G r_j^{A,\perp}$ is the $A$-fixed point space
  of the Grassmannian of $j$-planes
  in the orthogonal complement of the $A$-fixed subspace of a complete $A$-universe.
  The multiplication on the target arises from the maps
  \[ G r_i^{A,\perp} \times G r_j^{A,\perp}\ \to \ G r_{i+j}^{A,\perp}  \]
  that classify the direct sum of $A$-representations with trivial fixed points.
  
  Roughly speaking, the isomorphism \eqref{eq:geometric fix iso} is defined as follows.
  Elements of $\Phi^A_n \cN$ are fractions $x/a_V$, where $x\in \widetilde{\cN}_n^A(S^V)$
  for some $n\geq 0$, and some $A$-representation $V$ with trivial fixed points.
  We let $(M,h)$ represent the class $x$,
  so that $M$ is an $n$-dimensional smooth closed $A$-manifold,
  and $h\colon M\to S^V$ is a continuous $A$-map; such pairs are taken up to bordism and modulo
  the subgroup of pairs where $h$ is constant at the basepoint.
  Because the $A$-action is smooth, the fixed set $M^A$ is a disjoint union of regularly embedded
  smooth submanifolds of $M$ of varying dimensions.
  The $A$-space $S^V$ has only two fixed points 0 and $\infty$,
  and only the fixed points over the non-basepoint 0 will play a role.
  The Conner--Floyd map \eqref{eq:geometric fix iso}
  sends the bordism class $[M,h]$ to $\sum_{j\geq 0} [M^{(j)},\nu_j]$,
  where $M^{(j)}$ is the union of the $(n-j)$-dimensional components of $M^A\cap h^{-1}(0)$,
  and where $\nu_j:M^{(j)}\to Gr_j^{A,\perp}$ classifies the $A$-equivariant
  normal bundle of $M^{(j)}$ inside $M$.
  We refer to \cite[Proposition 6.2.16]{schwede:global}
  for more details on the construction and for a proof that the Conner--Floyd map
  is an isomorphism.
\end{construction}  

The next theorem collects various key properties of equivariant bordism
that are relevant for this paper.

\begin{theorem}\label{thm:bordism_properties}\ 
  \begin{enumerate}[\em (i)]
  \item The pair $(\cN,a)$ is an orientable $\elRO$-algebra with a unique inverse Thom class $t$.
  \item  The 2-torsion formal group law of the oriented $\elRO$-algebra
    $(\cN,a,t)$ is an initial 2-torsion formal group law.
  \item For every elementary abelian 2-group $A$, the morphism of $\cN_*$-algebras
    \[ \bigotimes_{\cN_*}^{\lambda\in A^\circ}\  \Phi^C_*\cN \ \to \  \Phi^A_*\cN \]
    that multiplies the inflation homomorphisms for all nontrivial $A$-characters is an isomorphism.
  \end{enumerate}
\end{theorem}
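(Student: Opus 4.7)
For part (i), I would leverage Example \ref{eg:bordism_versus_mO}, which identifies $\cN$ with $\bmO^\sharp$ as $\elRO$-algebras, and then invoke Theorem \ref{thm:oriented_ring2oriented_el} once I verify that $\bmO$ is an orientable global ring spectrum. The latter amounts to checking $\tr_1^C(1) = 0$ in $\pi_0^C(\bmO) \iso \cN_0^C$; geometrically, this transfer class is represented by the free $C$-set $\{\pm 1\}$, which bounds the interval $[-1,1]$ equipped with the sign action. For uniqueness of the inverse Thom class, Remark \ref{rk:choice of inverse Thom class} reduces the claim to showing $\cN(C,1) = \cN_1^C = 0$; every closed smooth $1$-manifold with $C$-action is a disjoint union of circles, and each of the three possible $C$-actions on $S^1$ (trivial, rotation by $\pi$, reflection about a diameter) visibly extends across a $2$-disk.

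For part (ii), the plan is to identify the formal group law of Theorem \ref{thm:fgl} with the classical Quillen formal group law attached to the Thom spectrum $\bmO$, and then to appeal to Quillen's theorem \cite{quillen:formal_groups} that $\cN_* = \cN(1)_*$ is the $2$-torsion Lazard ring carrying the universal $2$-torsion formal group law. Under the Pontryagin--Thom identification, the class $a/t \in (t^{-1}\cN)(C)_{-1}$ corresponds to the universal Euler (i.e., first Stiefel--Whitney) class of the tautological line bundle on $BC = \mR P^\infty$, so for any character $\lambda\colon A \to C$ the Euler class $e_\lambda = \lambda^*(a/t)$ is the first Stiefel--Whitney class of the associated $A$-equivariant line bundle. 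Consequently the expansion $d^2(e_\mu) \in \cN_*\dbr{e_1,e_2}$ expresses the first Stiefel--Whitney class of the tensor product of the two tautological line bundles on $BC \times BC$ in terms of those of the factors, which is exactly Quillen's construction. The principal obstacle is making this identification precise in our bookkeeping (in particular tracking the normalization imposed by the inverse Thom class $t$); once the two formal group laws are matched, initiality follows immediately from Quillen's theorem.

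For part (iii), I would use the Conner--Floyd isomorphism \eqref{eq:geometric fix iso} together with a factorization of $G r_j^{A,\perp}$ along isotypical components. Every $A$-representation without fixed vectors decomposes canonically as $\bigoplus_{\lambda \in A^\circ} V_\lambda$ with $V_\lambda$ a $\lambda$-isotypical $A$-subspace, yielding a homeomorphism
\[ G r_j^{A,\perp}\ \iso\ \coprod_{(m_\lambda)\colon \sum m_\lambda = j}\ \prod_{\lambda \in A^\circ} B O(m_\lambda)\ . \]
Combined with the Künneth formula, which applies because each $\cN_*(B O(m))$ is a free $\cN_*$-module (for instance by the Schubert cell decomposition of $B O(m)$), this yields a graded $\cN_*$-algebra isomorphism
\[ \Phi^A_* \cN \ \iso \ \bigotimes_{\cN_*}^{\lambda \in A^\circ}\bigoplus_{m \geq 0} \cN_{*-m}(B O(m)) \ = \ \bigotimes_{\cN_*}^{\lambda \in A^\circ} \Phi^C_* \cN \ . \]
It then remains to verify that this iso agrees with the product of inflations from the theorem statement: on Conner--Floyd representatives, the inflation $\lambda^*$ along a nontrivial $A$-character $\lambda$ sends $[N,\nu\colon N \to B O(m)]$ into the summand indexed by the tuple with $m_\lambda = m$ and $m_{\lambda'} = 0$ for $\lambda' \neq \lambda$, where $\nu$ now classifies the same vector bundle viewed as a $\lambda$-isotypical $A$-equivariant bundle. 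This step is a routine unwinding of definitions and the multiplicativity of the Künneth isomorphism.
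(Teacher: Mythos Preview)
Your proposal is correct and follows essentially the same route as the paper in all three parts: part~(i) via Theorem~\ref{thm:oriented_ring2oriented_el} and the vanishing of $\cN_1^C$; part~(iii) via the Conner--Floyd isomorphism~\eqref{eq:geometric fix iso}, the isotypical decomposition of $G r_j^{A,\perp}$, and K\"unneth for $\cN_*$-free bordism groups; and part~(ii) by reducing to Quillen's theorem. For part~(ii) the paper supplies precisely the identification you flag as ``the principal obstacle'': it passes to the stable Thom spectrum $\bMO$ (where $t$ becomes invertible) and uses the bundling homomorphism $\beta\colon \bMO^*_C(-)\to MO^*((-)\sm_C EC_+)$ to match the equivariant Euler class $e=a/t$ with the classical orientation class $e_\tau\in MO^1(BC)$, thereby pinning down the normalization you were concerned about.
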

\begin{proof}
  (i) As we detailed in Example \ref{eg:bordism_versus_mO},
  equivariant bordism is represented by the orientable global ring spectrum $\bmO$.
  Part (i) is thus a special case of Theorem \ref{thm:oriented_ring2oriented_el}.
  By the general theory, the set of inverse Thom classes of $(\cN,a)$
  is a torsor over the bordism group $\cN^C_1$; since this group is trivial,
  the inverse Thom class is unique.

  (ii)
  We reduce the claim to Quillen's theorem \cite{quillen:formal_groups},
  saying that the formal group law associated to the
  preferred, classical real orientation of the non-equivariant Thom spectrum $M O$
  is initial. To this end we recall Quillen's result in a form tailored to our purpose.

  We let $B O$ denote the classifying space of the infinite orthogonal group.
  The Thom spectrum construction assigns to a continuous map $f:X\to B O$
  from a CW-complex a Thom spectrum $T h(f)$, making it a functor
  \[ T h\ : \ Ho(Top/B O)\ \to \ \mathcal{SH} \]
  from the homotopy category of spaces over $B O$ to the stable homotopy category of spectra.
  The functoriality equips the Thom spectrum with a morphism
  of spectra $u(f):T h(f)\to T h(\Id_{B O})=MO$
  that is a tautological Thom class in $MO$-cohomology,
  in the sense that the group of stable maps $[T h(f),MO]=\mathcal{SH}(T h(f),MO)$
  is free of rank 1 as a module, under the Thom diagonal,
  over the ring $MO^0(X)=[\Sigma^\infty_+ X,MO]$,
  with $u(f)$ as generator.
  
  In this language, the preferred real orientation of the spectrum $M O$
  arises as the Euler class of the continuous map $\beta(\sigma-\mR):B C\to B O$ 
  that classifies the group homomorphism that sends the generator of the group $C$ to the
  infinite diagonal matrix with entries  $(-1,1,1,\dots)$.
  Some readers might prefer to think of continuous maps to $B O$
  as classifying virtual vector bundles of rank 0; our name `$\beta(\sigma-\mR)$' for the
  above map reflects that it classifies the virtual vector bundle over $B C$
  associated to the virtual $C$-representation $\sigma-\mR$.
  The Thom spectrum of $\beta(\sigma-\mR)$ is then equivalent to
  the desuspension of the reduced suspension spectrum of
  the space $S^\sigma\sm_C E C_+$ (which happens to be homotopy equivalent to $B C$),
  and its Thom class $u(\beta(\sigma-\mR))$ lies in the group  
  \[ [T h(\beta(\sigma-\mR)), MO]\ = \
       [S^{-1}\sm \Sigma^\infty  S^{\sigma}\sm_C E C_+,MO]\ = \
    \widetilde{M O}^1(S^\sigma\sm_C E C_+) \ .\]
  Its Euler class $e_\tau\in M O^1(B C)$, the restriction of the Thom class
  along the zero section $B C_+\to S^{\sigma}\sm_C E C_+$,
  is the preferred real orientation of $M O$.  
  
  By the theory of real-oriented cohomology theories, the ring
  $MO^*(B C\times B C)$ is then a power series algebra over $MO^*$ in the classes
  $p_1^*(e_\tau)$ and  $p_2^*(e_\tau)$, and expanding the class $(B\mu)^*(e_\tau)$
  in $p_1^*(e_\tau)$ and  $p_2^*(e_\tau)$ yields a 2-variable power series
  with coefficients in $MO^*$ that is a 2-torsion formal group law.
  Quillen's theorem \cite[Theorem 3]{quillen:formal_groups} says
  that this 2-torsion formal group law is initial.

  Now we use the global Thom ring spectrum $\bMO$ defined in \cite[Example 6.1.7]{schwede:global}
  that represents stable equivariant bordism.
  For every elementary abelian 2-group $A$,
  the equivariant homology theory represented by the global ring spectrum $\bMO$
  is the localization of $\bmO^A_*(-)$ at the inverse Thom classes,
  compare \cite[Corollary 6.1.35]{schwede:global}.
  The localization maps $\bmO^A_*(-)\to \bMO^A_*(-)$ at all
  representation spheres define a morphism of oriented $\elRO$-algebras
  $(\bmO^\sharp,a,t)\to(\bMO^\sharp,a,t)$.
  Here we denote the image of the unique inverse Thom class of $\bmO^\sharp$
  by the same letter  $t\in \bMO_1^C(S^\sigma)$;
  we alert the reader that in the orientable $\elRO$-algebra $\bMO^\sharp$,
  the inverse Thom class is far from unique.
  The previous morphism exhibits $\bMO^\sharp$ as
  the localization away from $t$ of $\bmO^\sharp$.
  In particular, this morphism is an isomorphism of underlying
  graded $\mF_2$-algebras, and so it necessarily takes
  the 2-torsion formal group law of $\bmO^\sharp$ to
  the 2-torsion formal group law of $\bMO^\sharp$.
  So it suffices to show that the 2-torsion formal group law of the oriented $\elRO$-algebra
  $(\bMO^\sharp,a,t)$ is an initial 2-torsion formal group law.

  By Theorem 6.1.7 and Remark 6.1.20 of \cite{schwede:global},
  the class $t$ in $\bMO_1^C(S^\sigma)$ is an $R O(C)$-graded unit. 
  The inverse of $t$ is the `shifted Thom class' $u=\bar\sigma_{C,\sigma}\in\bMO^1_C(S^\sigma)$
  discussed in \cite[Remark 6.1.20]{schwede:global}.
  We claim that the bundling homomorphism
  \begin{equation}\label{eq:beta_stable}
    \beta\ : \ \widetilde{\bMO}^1_C(S^\sigma) \ \to \ \widetilde{M O}^1(S^{\sigma}\sm_C E C_+)     
  \end{equation}
  takes the $C$-equivariant Thom class $u$ to the non-equivariant Thom class $u(\beta(\sigma-\mR))$
  discussed above.
  To prove this we refer to the $C$-equivariant Thom spectrum formalism,
  a general reference for which is \cite[Chapter X, \S 3]{lms}.
  We write $\bBO_C$ for the classifying $C$-space for virtual $C$-equivariant vector bundles
  of rank 0. Much like in the non-equivariant situation,
  the equivariant Thom construction
  turns $C$-equivariant maps $X\to\bBO_C$ into genuine $C$-spectra
  endowed with a morphism to $\bMO_C$, the underlying $C$-spectrum
  of the global spectrum $\bMO$.
  An explicit representative for the class $u$ in $\bMO^1_C(S^\sigma)$
  is the $C$-equivariant map
  \begin{align*}
     S^{\sigma\oplus\sigma}\ &\to \ 
    Th(G r_2(\sigma\oplus\mR\oplus\sigma\oplus\mR))\ = \ \bMO(\sigma\oplus\mR)\\
    (x,y)\ &\longmapsto \ ((x,0,y,0),\sigma\oplus 0\oplus \sigma\oplus 0)\nonumber
  \end{align*}
  So $u:S^{\sigma-1}\to\bMO_C$ is the $C$-equivariant Thom class
  of the $C$-equivariant map $*\to\bBO_C$ that classifies
  the virtual $C$-representation $2\sigma-(\sigma\oplus\mR)=\sigma-\mR$.
  The unstable and stable bundling maps take equivariant Thom spectra
  to non-equivariant Thom spectra, see \cite[X Corollary 6.3]{lms}.
  When applied to the $C$-map $\sigma-\mR:*\to\bBO_C$, this identifies the image of
  $u:S^{\sigma-1}=T h(\sigma-\mR)\to\bMO_C$ under the stable bundling map \eqref{eq:beta_stable}
  with the non-equivariant stable map 
  \[  u(\beta(\sigma-\mR))\ :\ S^{\sigma-1}\sm_C E C_+ = T h(\beta(\sigma-\mR))\ \to\  M O\]
  obtained by thomifying $\beta(\sigma-\mR):B C\to B O$.
  This proves the claim that $\beta(u)=u(\beta(\sigma-\mR))$.
  Naturality of the bundling homomorphisms
  for the $C$-equivariant fixed point inclusion $a:S^0\to S^\sigma$
  then shows that the bundling homomorphism
  \[ \beta\ : \ \bMO^C_{-1} = \bMO^1_C(S^0) \ \to \ \widetilde{M O}^1(S^0\sm_C E C_+)= MO^1(B C)
  \]
  takes the $C$-equivariant Euler class $e=a/t=a\cdot u$ of the
  oriented $\elRO$-algebra $(\bMO^\sharp,a,t)$ to the Euler class $e_\tau$
  that defines the preferred classical orientation of $M O$.

  Now we can wrap up.
  The formal group law of $\bMO^\sharp$ is defined by expanding the class $\mu^*(e)=\mu^*(a/t)$
  as a power series in $p_1^*(e)$ and  $p_2^*(e)$.
  And the formal group law in Quillen's work is the expansion of $\mu^*(e_\tau)$
  as a power series in $p_1^*(e_\tau)$ and  $p_2^*(e_\tau)$.
  Since the bundling maps are compatible with restriction along group homomorphisms
  and take $e$ to $e_\tau$, 
  the 2-torsion formal group law of the oriented $\elRO$-algebra $(\bMO^\sharp,a,t)$
  is the preferred 2-torsion formal group law over $MO^*$ that features in
  Quillen's theorem \cite{quillen:formal_groups}. So we have shown claim (ii).

  (iii)   We consider the following commutative square of graded commutative $\cN_*$-algebras:
  \[ \xymatrix{
      \bigotimes_{\cN_*}^{\lambda\in A^\circ}\  \Phi^C_*\cN \ar[r]\ar[d]^{\iso}_{\tensor \varphi_C} &
      \Phi^A_*\cN \ar[d]_\iso^{\varphi_A}\\
      \bigotimes_{\cN_*}^{\lambda\in A^\circ} \left( \bigoplus_{j\geq 0}\ \cN_{*-j}(G r_j^{C,\perp})\right) \ar[r]&
      \bigoplus_{j\geq 0}\ \cN_{n-j}(G r_j^{A,\perp} )
    }  \]
  The left vertical map is a tensor product of copies of the Conner--Floyd fixed point isomorphism
  \eqref{eq:geometric fix iso}. The right vertical map is the
  isomorphism \eqref{eq:geometric fix iso} for the group $A$.
  The lower horizontal map is also induced by inflation along characters.
  More precisely, the restriction to the tensor factor indexed by $(\lambda,j)$
  is induced by the continuous map $G r_j^{C,j}\to G r_j^{A,\perp}$ that identifies
  $\sigma^\infty$ with the $\lambda$-isotypical summand of the complete $A$-universe,
    and then takes Grassmannians of $j$-planes.
  
  Since the vertical maps in the commutative square are isomorphisms,
  we are reduced to showing that the lower horizontal map is an isomorphism.
  This, in turn, is a combination of two facts.
  Firstly, the nontrivial irreducible representations of $A$ are all 1-dimensional,
  and parameterized by the set  $A^\circ$ of nontrivial $A$-characters.
  So the isotypical decomposition of $A$-invariant subspaces becomes a homeomorphism
  \[  
    \coprod_{\sum i_\lambda = j} \left( \prod_{\lambda\in A^\circ} G r_{i_\lambda} \right)
    \ \iso \  G r_j^{A,\perp}  \ .\]
  The coproduct is indexed by $A^\circ$-tuples 
  of natural numbers $i_\lambda$ that add up to $j$.
  Secondly, all non-equivariant bordism groups of spaces
  are free as modules over the non-equivariant bordism ring,
  see for example \cite[Theorem 8.3]{conner-floyd}.
  So the K{\"u}nneth isomorphism identifies the graded bordism groups of a product
  with the tensor product of the bordism groups of the factors.
  If we take the grading shift by $j$ on the $j$-plane summands into account,
  these two facts combine into the statement that
  the  lower horizontal map in the square is an isomorphism.
  This completes the proof.
\end{proof}

\section{The universal property of equivariant bordism}

In this section we prove our main result Theorem \ref{thm:main},
saying that equivariant bordism is an initial oriented $\elRO$-algebra.
We will use Theorem \ref{thm:fix_criterion}, so we need a firm grasp
on the geometric fixed point rings of equivariant bordism.
It is well-known that the $C$-geometric fixed point ring $\Phi^C_*\cN$
of equivariant bordism is a polynomial algebra over the non-equivariant bordism ring
in infinitely many generators, one in every positive degree,
see for example \cite[Lemma 25.1]{conner:differentiable-2nd}.
A key ingredient for the proof of Theorem \ref{thm:main} is the fact that
the specific classes $\beta_n$ which were introduced
purely in terms of the structure as an oriented $\elRO$-algebra
in Definition \ref{def:define_beta} can be taken as such polynomial generators,
see Theorem \ref{thm:C-fix N}.
Unfortunately, we do not know a direct argument to prove this fact, so we introduce
auxiliary classes $\zeta_n\in \cN^C_{n+1}$ represented by explicit $C$-manifolds
in Construction \ref{con:zeta}.
Then we show that the classes $\zeta_n$ for $n\geq 1$, together with $t/a=\beta_0$
form polynomial generators for $\Phi^C\cN$, and that $\beta_n$ is
congruent to $\zeta_n$ modulo $(\beta_0,\dots,\beta_{n-1})$,
see Theorem \ref{thm:C-fix N} and Theorem \ref{thm:C-fix N:beta}.

\begin{construction}
  We define a group $G_n$ as the set $\mZ^n$ endowed with a group structure by
  \begin{equation}\label{eq:G_k acts}
 (x_1,\dots,x_n)\cdot (y_1,\dots,y_n)\ = \
    \left(x_1+  y_1,(-1)^{y_1}\cdot x_2+y_2,\dots,
       (-1)^{y_{n-1}}\cdot x_n+ y_n\right) \ .    
  \end{equation}
  In other words, $G_1=\mZ$, and $G_n=G_{n-1}\ltimes \mZ$,
  the semidirect product formed with respect to the $G_{n-1}$-action on $\mZ$
  via the sign of the last coordinate.
  We define a right action of the group $G_n$ on $\mR^n$
  by the same formula \eqref{eq:G_k acts},
  but now with the elements $x_i$ being real numbers as opposed to integers.
  This action   by affine linear transformations
  is free, properly discontinuous and cocompact.
  So the orbit space $\mR^n/G_n$ is simultaneously a smooth closed $n$-manifold
  and a classifying space for the group $G_n$.
  The mod 2 cohomology of $\mR^n/G_n$,
  and hence the  mod 2 group cohomology of $G_n$,
  is thus a Poincar{\'e} duality algebra with duality class in dimension $n$.
  For example, $\mR/G_1=\mR/\mZ$ is a circle, and $\mR^2/G_2$ is a Klein bottle.
  For $n\geq 2$, the manifold $\mR^n/G_n$ is not orientable.
\end{construction}

The following calculation of the mod 2 cohomology
algebra of the group $G_n$, or equivalently of the manifold $\mR^n/G_n$,
will enter into the proof that the classes $\zeta_n$ are polynomial generators
of $\Phi^C_*\cN$, compare Theorem \ref{thm:C-fix N}.
We expect that this calculation is well-known, but we were unable to find a reference.
The cases $n=1$ and $n=2$ are classical, being
the cohomology algebras of the circle and the Klein bottle.
For $i=1,\dots,n$, we denote by
\[ p_i \ : \ G_n \ \to \ \mF_2 \]
the projection to the $i$-th coordinate of $G_n$, taken modulo 2.
We view these homomorphisms are cohomology classes in $H^1(G_n;\mF_2)=\Hom(G,\mF_2)$.

\begin{prop}\label{prop:cohomology of G_n}
  The cohomology algebra $H^*(G_n;\mF_2)$ has an $\mF_2$-basis consisting of the classes
  \[ p_{i_1}\cdot\ldots\cdot p_{i_m} \]
  for all $1\leq i_1 < i_2 <\dots < i_m\leq n$.
  The multiplicative structure is determined by the relations
  \[ p_i^2\ = \
    \begin{cases}
      \  0 & \text{ for $i=1$, and}\\
      p_{i-1}\cdot p_i & \text{ for $2\leq i\leq n$.}
    \end{cases}
  \]
  In particular, $p_n^n=p_1\cdot p_2\cdot \ldots\cdot p_n$, and this class is the
  generator of $H^n(G_n;\mF_2)$.
\end{prop}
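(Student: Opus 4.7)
The plan is to induct on $n$, the base case $n=1$ reducing to the standard $H^*(\mZ;\mF_2)=\mF_2[p_1]/(p_1^2)$. For the inductive step I would exploit that $G_n$ is a split extension
\[ 1 \to \mZ \to G_n \to G_{n-1} \to 1, \]
where $\mZ$ is the normal subgroup $\{(0,\dots,0,y_n)\}$, the section $G_{n-1}\hookrightarrow G_n$ is $(y_1,\dots,y_{n-1})\mapsto(y_1,\dots,y_{n-1},0)$, and the conjugation action of $G_{n-1}$ on $\mZ$ is by the sign of the last coordinate of $G_{n-1}$. Since this sign action becomes trivial modulo $2$, the Lyndon--Hochschild--Serre spectral sequence has $E_2^{p,q}=H^p(G_{n-1};\mF_2)\tensor H^q(\mZ;\mF_2)$, concentrated in rows $q=0,1$. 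The only possibly nontrivial differential is $d_2\colon E_2^{*,1}\to E_2^{*+2,0}$, and it must vanish: the group-theoretic section makes the edge map $H^*(G_{n-1};\mF_2)\to H^*(G_n;\mF_2)$ split injective, so $E_2^{*,0}=E_\infty^{*,0}$ and no differentials hit the bottom row. Hence the spectral sequence collapses at $E_2$ and $H^*(G_n;\mF_2)$ is free of rank $2$ over $H^*(G_{n-1};\mF_2)$ on $\{1,p_n\}$, for any lift $p_n$ of the generator of $H^1(\mZ;\mF_2)$.

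Concretely I take $p_n$ to be the cohomology class of the homomorphism $G_n\to\mF_2$, $(y_1,\dots,y_n)\mapsto y_n\bmod 2$; this is well-defined because the sign factor $(-1)^{y_{n-1}}$ appearing in \eqref{eq:G_k acts} is $1$ modulo $2$. For the same reason the projection $G_n\to G_{n-1}$ onto the first $n-1$ coordinates is a group homomorphism, under which the inductively constructed classes $p_1,\dots,p_{n-1}$ on $G_{n-1}$ pull back to the classes of the same name on $G_n$. Combined with the collapse of the spectral sequence and the inductive additive basis, this produces the claimed basis $\{p_{i_1}\cdots p_{i_m}\colon 1\leq i_1<\cdots<i_m\leq n\}$ of $H^*(G_n;\mF_2)$.

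The multiplicative relations are the real content of the proposition, since the collapsed spectral sequence only describes the associated graded ring and is blind to the square of $p_n$: a priori $p_n^2$ merely lies in the filtration subspace $F^1 H^2(G_n;\mF_2)$, and pinning it down is a genuine multiplicative extension problem. I would resolve it by computing $p_i^2=\mathrm{Sq}^1(p_i)=\beta(p_i)$ via the Bockstein associated to $0\to\mF_2\to\mZ/4\to\mF_2\to 0$. Lifting $p_i$ to the set-theoretic cochain $\tilde p_i(y)=y_i\bmod 4$ and applying \eqref{eq:G_k acts} yields, for $i\geq 2$,
\[ \delta\tilde p_i(g,h) \ = \ g_i\bigl(1-(-1)^{h_{i-1}}\bigr) \ \equiv\ 2\,p_i(g)\,p_{i-1}(h)\pmod 4, \]
and halving identifies $\beta(p_i)$ with the cup product cocycle for $p_i\cdot p_{i-1}$, giving $p_i^2=p_{i-1}p_i$. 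For $i=1$ the coboundary vanishes identically, giving $p_1^2=0$. Iterating $p_k^2=p_{k-1}p_k$ immediately produces $p_n^n=p_1 p_2\cdots p_n$, which is a basis element of the one-dimensional $H^n(G_n;\mF_2)$ and hence its generator. The main obstacle, as anticipated, is precisely this multiplicative extension $p_n^2$; the Bockstein computation, made tractable by the explicit affine formula \eqref{eq:G_k acts} for the group law, is what ultimately fixes the ring structure.
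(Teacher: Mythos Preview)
Your argument is correct. The inductive additive computation via the Lyndon--Hochschild--Serre spectral sequence is essentially the paper's approach as well; you are slightly more explicit than the paper about why $d_2$ vanishes, invoking the group-theoretic section to force split injectivity of the edge map.

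The genuine difference is in how you resolve the multiplicative extension $p_n^2$. The paper produces a homomorphism $G_n\to\mF_2\ltimes\mZ/4$ (the dihedral group of order~8) sending $p_{n-1},p_n$ to classes $\alpha+\beta,\beta$ satisfying the known relation $\alpha\beta=0$ in $H^*(D_8;\mF_2)$, and pulls back that relation. You instead compute $p_i^2=\mathrm{Sq}^1(p_i)=\beta(p_i)$ directly at the cochain level, using the explicit group law to evaluate the coboundary of the $\mZ/4$-valued lift. Your route is more self-contained, requiring no outside input about $D_8$; the paper's route is shorter once one is willing to quote the dihedral cohomology ring, and makes the role of the $\mZ/4$ extension (which is implicitly present in your Bockstein as well) more visibly structural.
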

\begin{proof}
  We argue by induction on $n$; for $G_1=\mZ$, the result is well-known.
  Now we suppose that $n\geq 2$.
  We consider the cohomology algebra of $G_n$
  as module over the cohomology algebra of $G_{n-1}$ by restriction along the projection
  \[ q\ : \ G_n = G_{n-1}\ltimes\mZ \ \to \ G_{n-1} \ , \quad q(y_1,\dots,y_n)=(y_1,\dots,y_{n-1}) \ .\]
  The mod 2 cohomology of $\mZ$ is concentrated in dimensions 0 and 1,
  where it is 1-dimensional.
  So the action of $G_{n-1}$ on the mod 2 cohomology of $\mZ$ through $p_{n-1}$
  is necessarily trivial,
  and the Lyndon-Hochschild-Serre spectral sequence degenerates into a short exact sequence of
  graded  $H^*(G_{n-1};\mF_2)$-modules
  \[  0 \ \to \ H^*(G_{n-1};\mF_2) \xra{\ q^* \ }\
    H^*(G_n;\mF_2) \ \xra{\ \delta\ } \ H^{*-1}(G_{n-1};\mF_2)\ \to\ 0\ .\]
  Since $H^1(G_n;\mF_2)=\Hom(G_n,\mF_2)$ is the direct sum of
  $q^*(H^1(G_{n-1};\mF_2))$ and the span of the homomorphism $p_n:G_n\to \mF_2$,
  the connecting homomorphism  must satisfy $\delta(p_n)=1$.
  Hence  $H^*(G_n;\mF_2)$ is free as a $H^*(G_{n-1};\mF_2)$-module
  on the classes $1$ and $p_n$. So the claim about the $\mF_2$-basis
  and those multiplicative relations that do not involve $p_n$ follow by induction. 

  To prove the remaining multiplicative relation $p_n^2=p_{n-1}\cdot p_n$,
  we use naturality for the group homomorphism
  \[ h\ : \ G_n \ \to \ \mF_2\ltimes\mZ/4 \ , \quad
  h(y_1,\dots,y_n)\ = \ (y_{n-1}+ 2\mZ,\ y_n+ 4\mZ)\]
  to the semidirect product of $\mF_2$ acting by sign on $\mZ/4$, also known
  as the dihedral group of order~8.
  The cohomology ring of this dihedral group is well-known,
  see for example \cite[Chapter IV, Theorem 2.7]{adem-milgram};
  we only need the information that the two homomorphisms
  \[ \alpha , \beta\ : \ \mF_2\ltimes\mZ/4 \ \to\ \mF_2  \]
  given by $\alpha(x,y+4\mZ)= x+ y+ 2\mZ$ and  $\beta(x,y+4\mZ)= y+2\mZ$ satisfy 
  the cup product relation $\alpha\cdot\beta = 0$.
  Because $\alpha\circ h=p_{n-1}+p_n$ and $\beta\circ h=p_n$,
  naturality yields the desired multiplicative relation
  \[     p_{n-1} \cdot p_n\ +\  p_n^2 \ =\
    (p_{n-1}+p_n)\cdot p_n\ = \ h^*(\alpha\cdot\beta)\ = \ 0 \ .\qedhere \]
\end{proof}

The next proposition gives a criterion for recognizing $C$-equivariant bordism classes
as polynomial generators of $\Phi^C_*\cN$.
It is very similar to \cite[Lemma 4]{boardman:revisited}
which gives a characteristic number criterion
to detect when elements of $\cN_*(B O)$ can serve as polynomial generators.
This result ought to be well-known to experts,
but we do not know of a reference in this form.

\begin{prop}\label{prop:top SW power}
  Let $M$ be a smooth closed $(n+1)$-dimensional $C$-manifold.
  Let $F$ be the union of the $n$-dimensional components of $M^C$,
  and let $\nu:F\to\mR P^\infty$ classify the normal line bundle of $F$ inside $M$.
  Suppose that $\nu_*[F] \ne  0$ in $H_n(\mR P^\infty,\mF_2)$,
  where $[F]\in H_n(F;\mF_2)$ is the mod 2 fundamental class.
  Then $[M]/1$ is indecomposable in $\Phi^C_*\cN$ as an algebra over $\cN_*$.
\end{prop}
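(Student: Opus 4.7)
The strategy is to pass through the Conner--Floyd decomposition
\[\varphi_C \colon \Phi^C_*\cN \xra{\iso} \bigoplus_{j \geq 0} \cN_{*-j}(G r_j^{C,\perp})\]
of \eqref{eq:geometric fix iso} and read off the $\cN_*$-algebra indecomposable quotient from the grading by fixed point codimension. Since $\sigma$ is the only nontrivial irreducible $C$-representation, any $j$-plane in $\sigma^\infty$ is automatically $C$-invariant and carries the representation $j\sigma$, so $G r_j^{C,\perp}$ is naturally the non-equivariant real Grassmannian $B O(j)$; the ring structure on the target of $\varphi_C$ is induced by the Whitney sum maps $B O(i) \times B O(j) \to B O(i+j)$. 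Under $\varphi_C$, the class $[M]/1$ is sent to $\sum_{j \geq 0} [M^{(j)}, \nu_j]$, whose summand of bidegree $(k,j)=(n, 1)$ is exactly $[F, \nu] \in \cN_n(B O(1))$ by the definition of $F$ and $\nu$ in the hypothesis.

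First I would pin down the $j = 1$ summand of $I^2$, where $I \subseteq \Phi^C_*\cN$ denotes the positive-grading ideal. A product $a \cdot b$ with $a \in \cN_{k_1}(B O(i_1))$ and $b \in \cN_{k_2}(B O(i_2))$ in $I$ lands in $\cN_n(B O(1))$ only if $i_1 + i_2 = 1$ and $k_1 + k_2 = n$. One of $i_1, i_2$ must then equal $0$, and the requirement that the corresponding factor, which lies in $\cN_*(B O(0)) = \cN_*$, belong to $I$ forces it to have strictly positive bordism degree. Consequently the $j = 1$ component of $I^2$ equals $\cN_+ \cdot \cN_n(B O(1))$, where $\cN_+ = \bigoplus_{k \geq 1} \cN_k$.

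It therefore suffices to show that $[F, \nu]$ represents a nonzero class in the quotient $\cN_n(B O(1)) / \cN_+ \cdot \cN_n(B O(1)) = \mF_2 \otimes_{\cN_*} \cN_n(B O(1))$. By Thom's splitting of $M O$ as a wedge of mod $2$ Eilenberg--MacLane spectra, there is an isomorphism of $\cN_*$-modules $\cN_*(X) \iso H_*(X; \mF_2) \otimes_{\mF_2} \cN_*$ for every CW complex $X$, under which the reduction $\cN_*(X) \to \mF_2 \otimes_{\cN_*} \cN_*(X)$ is identified with the mod $2$ Hurewicz homomorphism $\cN_*(X) \to H_*(X; \mF_2)$, sending a geometric cycle $[N, f]$ to $f_*[N]$. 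Applied to $[F, \nu]$, this yields precisely $\nu_*[F] \in H_n(\mR P^\infty; \mF_2)$, which is nonzero by hypothesis; hence $[M]/1$ has nonzero image in $I/I^2$.

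The only substantive step is the bidegree bookkeeping that isolates the $j = 1$ part of $I^2$; the rest is a formal consequence of the Conner--Floyd isomorphism \eqref{eq:geometric fix iso} and the splitting of $M O$. In particular, no further geometric argument beyond the stated hypothesis on $\nu_*[F]$ is needed.
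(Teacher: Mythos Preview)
Your proof is correct and follows essentially the same route as the paper: both pass through the Conner--Floyd isomorphism \eqref{eq:geometric fix iso}, project to the $j=1$ summand $\cN_*(\mR P^\infty)$, and reduce the question to the nonvanishing of $\nu_*[F]$ via the Thom splitting $\cN_*(X)\otimes_{\cN_*}\mF_2\cong H_*(X;\mF_2)$. The only difference is packaging: you phrase indecomposability as nonvanishing in $I/I^2$ for $I$ the positive-degree ideal and do the bidegree bookkeeping explicitly, whereas the paper states the equivalent condition as ``the $j=1$ projection can serve as a generator in an $\cN_*$-basis of $\cN_*(\mR P^\infty)$''. One small notational slip: the $j=1$ part of $I^2$ in total degree $n$ is $\sum_{k\geq 1}\cN_k\cdot\cN_{n-k}(BO(1))$, not $\cN_+\cdot\cN_n(BO(1))$ as written; the intended quotient $\mF_2\otimes_{\cN_*}\cN_n(BO(1))$ is of course correct.
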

\begin{proof}
  By the Conner--Floyd isomorphism \eqref{eq:geometric fix iso}
  \[  \varphi_C\ : \ \Phi_*^C\cN\ \xra{\ \iso\ } \ \bigoplus_{j\geq 0}\ \cN_{*-j}(G r_j ) \ ,\]
  the class $[M]/1$ is indecomposable in $\Phi^C_*\cN$ if and only if the projection
  of $\varphi_C[M]$ to the linear summand $j=1$ in the direct sum decomposition 
  can serve as a generator in an $\cN_*$-basis of $\cN_*(\mR P^\infty)$.
  By the geometric description of $\varphi_C$, the projection
  to the linear summand is represented by the pair $(F,\nu)$
  consisting of the $n$-dimensional fixed point components
  and the map classifying their normal line bundles inside $M$.
  For every space $X$, the map
  \[  \cN_*(X)\ \to \ H_*(X;\mF_2)\ , \quad  [f:W\to X]\ \longmapsto \  f_*[W]\ ,\]
  passes to an isomorphism
  \[  \cN_*(X)\tensor_{\cN_*}\mF_2 \ \iso \ H_*(X;\mF_2)\ .\]
  So for $X=\mR P^\infty=G r_1$, a pair $(W,f)$
  consisting of a smooth closed $n$-manifold $W$ and a continuous map $f:W\to\mR P^\infty$
  represents a generator in an $\cN_*$-basis of $\cN_*(\mR P^\infty)$
  if and only if $f_*[W]$ is the nonzero element of $H_n(\mR P^\infty;\mF_2)$.
  Altogether, this proves the proposition.
\end{proof}

\begin{construction}[The $\zeta$-classes]\label{con:zeta}
  We define a left action of the group $G_{n-1}$ on the projective space $\mR P^2$ by
\[  (s_1,\dots,s_{n-1})\cdot [x:y:z]\  = \ [x:y:(-1)^{s_{n-1}}\cdot z]\ . \]
By taking balanced product with $\mR^{n-1}$ we obtain a smooth closed $(n+1)$-manifold
\[  W_{n+1}\ = \ \mR^{n-1}\times_{G_{n-1}} \mR P^2 \ . \]
We define a smooth involution $\tau:W_{n+1}\to W_{n+1}$ by
\[  \tau[s_1,\dots,s_{n-1},[x:y:z]]\ = \  [s_1,\dots,s_{n-1},[-x:y:z]]\ .  \]
We write
\[ \zeta_n \ = \ [W_{n+1},\tau]/1 \ \in \ \Phi^C_{n+1}\cN \]
for the bordism class of the $C$-manifold $(W_{n+1},\tau)$, or rather the effective
class in the geometric fixed point ring that it represents.
\end{construction}

\begin{theorem}\label{thm:C-fix N}
  The classes $\zeta_n$ for $n\geq 1$ and the class $t/a$ form a set of polynomial generators
  of the $C$-geometric fixed points of equivariant bordism as an algebra over $\cN_*$.
\end{theorem}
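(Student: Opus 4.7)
The plan is to apply Proposition~\ref{prop:top SW power} to detect indecomposability. We know (e.g.\ from \cite[Lemma 25.1]{conner:differentiable-2nd}) that $\Phi^C_*\cN$ is a polynomial algebra over $\cN_*$ with exactly one generator in each positive degree. Since $t/a$ and $\{\zeta_n\}_{n\ge 1}$ fill the degrees $1,2,3,\ldots$ respectively, a standard triangular induction on degree reduces the theorem to showing that each of these classes is indecomposable in $\Phi^C_*\cN$ over $\cN_*$ in the sense of Proposition~\ref{prop:top SW power}. For $t/a$, the geometric description of the Conner--Floyd isomorphism \eqref{eq:geometric fix iso} gives this directly: the representative $(S^\sigma,\mathrm{id}_{S^\sigma})$ of $t$ has a single $C$-fixed point over $0\in S^\sigma$ with normal representation $\sigma$, so $\varphi_C(t/a)$ is the class of a point in the $j=1$ summand $\cN_0(\mR P^\infty)=\mF_2$, a generator.

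For $\zeta_n=[W_{n+1},\tau]/1$, I plan to apply Proposition~\ref{prop:top SW power} directly. The involution $\tau$ on $\mR P^2$ fixes the isolated point $[1:0:0]$ together with the line $\{[0:y:z]\}\cong \mR P^1$, so after balanced product with $\mR^{n-1}$ the $n$-dimensional fixed component of $W_{n+1}^C$ is $F=\mR^{n-1}\times_{G_{n-1}}\mR P^1$, with $G_{n-1}$ acting on $\mR P^1$ by reflection through the sign of its last coordinate. The long exact sequence of the fibration $\mR P^1\to F\to \mR^{n-1}/G_{n-1}$ identifies $\pi_1(F)$ with the semidirect product $G_{n-1}\ltimes \mZ$ whose outer action is the sign character of the last factor, i.e., with $G_n$ itself. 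Consequently $F$ is aspherical with $F\simeq BG_n$, and its mod $2$ cohomology is the Poincar\'e duality algebra described in Proposition~\ref{prop:cohomology of G_n}.

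The technical heart is the explicit computation $w_1(\nu)=p_n$ in $H^1(F;\mF_2)=\Hom(G_n,\mF_2)$, where $\nu$ is the normal line bundle of $F$ inside $W_{n+1}$. I would verify this on each generator of $G_n$ in turn: the generators $e_1,\dots,e_{n-2}$ act trivially on the fiber $\mR P^1$ and therefore have trivial $\nu$-monodromy; the generator $e_{n-1}$ lifts via the section at $[1:0]\in\mR P^1$ (a fixed point of the reflection on $\mR P^1$), where the induced action on the normal line of $\mR P^1\subset\mR P^2$ is trivial, so the monodromy is again trivial; and the fiber loop $e_n$ carries the nontrivial monodromy of the tautological bundle $\gamma=\nu|_{\mR P^1}$. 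Granting this, Proposition~\ref{prop:cohomology of G_n} yields $w_1(\nu)^n=p_n^n=p_1 p_2\cdots p_n$, the generator of $H^n(F;\mF_2)$, and hence $\nu_*[F]$ is the nontrivial class in $H_n(\mR P^\infty;\mF_2)$; Proposition~\ref{prop:top SW power} then delivers the indecomposability of $\zeta_n$. The main obstacle will be this $w_1(\nu)$ calculation: one must verify the exact vanishing of the $p_1,\ldots,p_{n-1}$ corrections, since an unwanted $p_1$-component in $w_1(\nu)$ would already force $w_1(\nu)^n$ to vanish (as a mod $2$ calculation at $n=3$ in the ring of Proposition~\ref{prop:cohomology of G_n} illustrates), so the section analysis at $[1:0]$ must be performed with care.
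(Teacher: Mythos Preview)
Your approach is correct and matches the paper's: both apply Proposition~\ref{prop:top SW power} to $(W_{n+1},\tau)$, identify the $n$-dimensional fixed component $F$ with a model for $BG_n$, determine that its normal line bundle is the one associated to the character $p_n$, and conclude via $p_n^n\ne 0$ from Proposition~\ref{prop:cohomology of G_n}. The one place the paper proceeds differently is in identifying the normal bundle: rather than your monodromy analysis on generators of $\pi_1(F)$, the paper writes down the explicit smooth open embedding
\[
\mR^n\times_{G_n}p_n^*(\sigma)\ \hookrightarrow\ W_{n+1},\qquad
[s_1,\dots,s_n;u]\ \longmapsto\ [s_1,\dots,s_{n-1},[u:\sin(s_n\pi):\cos(s_n\pi)]],
\]
whose zero section lands on $F$; this makes both the diffeomorphism $F\cong\mR^n/G_n$ and the identification $\nu\cong p_n^*(\sigma)$ immediate, and sidesteps the careful section-at-$[1{:}0]$ verification you flag as the main obstacle.
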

\begin{proof}
  We verify the criterion of Proposition \ref{prop:top SW power}.
  The fixed points of the involution $\tau:W_{n+1}\to W_{n+1}$ are
  \begin{align*}
     (W_{n+1})^C \
     &= \  \mR^{n-1}\times_{G_{n-1}} ( \mR P^2)^C \\
     &= \ \mR^{n-1}\times_{G_{n-1}} (P(0\oplus \mR^2) \cup \{\mR\oplus 0\oplus 0\})\\
     &\iso \ \left(  \mR^{n-1}\times_{G_{n-1}}P(0\oplus \mR^2)\right)\  \cup\ \mR^{n-1}/G_{n-1}\ .
   \end{align*}
   This fixed point manifold has two connected components, one of dimension $n$ and one of dimension $n-1$.
   For our present purposes, we only care about the $n$-dimensional component
   \[ \mR^{n-1}\times_{G_{n-1}}P(0\oplus \mR^2) \ ,  \]
   and we can ignore the other one. To apply Proposition \ref{prop:top SW power},
   we need to identify the normal line bundle of this component.

   We let $p_n:G_n\to\{\pm 1\}$
   denote the group homomorphism given by $p_n(s_1,\dots,s_n)=(-1)^{s_n}$.
   We embed the total space of the line bundle over $\mR^n/G_n$ associated to $p_n$
   into the manifold $W_{n+1}$ by
   \begin{align*}
            \mR^n\times_{G_n} p_n^*(\sigma) \ &\to \quad 
     \mR^{n-1}\times_{G_{n-1}} P(\mR^3) \ = \ W_{n+1} \\
     [s_1,\dots,s_n;u]\ &\longmapsto\ [s_1,\dots,s_{n-1},[u:\sin(s_n\cdot\pi):\cos(s_n\cdot \pi)]]\ .
   \end{align*}
   This map is a smooth open embedding with image 
   $\mR^{n-1}\times_{G_{n-1}}( \mR P^2\setminus \{\mR\oplus 0\oplus 0\})$,
   and it identifies the zero section  $\mR^n/G_n$ in the source with
   the $n$-dimensional fixed point component $\mR^{n-1}\times_{G_{n-1}}P(0\oplus\mR^2)$.
   So this embedding witnesses that the normal line bundle of 
   $\mR^{n-1}\times_{G_{n-1}}P(0\oplus\mR^2)$ inside $W_{n+1}$ is the
   line bundle over $\mR^n/G_n$ associated to the character $p_n$.

   Proposition \ref{prop:top SW power} thus shows that the bordism class of $W_{n+1}$
   can serve as one of the polynomial generators for $\Phi_*^C\cN$,
   provided we show that the map $\mR^n/G_n\to \mR P^\infty$
   that classifies the line bundle associated to $p_n$
   is nonzero in mod 2 homology $H_n(-;\mF_2)$.
   Or equivalently, the classifying map is nonzero in mod 2 cohomology $H^n(-;\mF_2)$.
   Since the $n$-manifold $\mR^n/G_n$ is also a classifying space for the group $G_n$,
   we may show that the group homomorphism $p_n$ induces a nontrivial map
   in mod 2 group cohomology in dimension $n$.
   By Proposition \ref{prop:cohomology of G_n},
   the group cohomology class in $H^1(G_n;\mF_2)$ represented by $p_n$ 
   has the property that its $n$-th cup power
   is the generator of the top cohomology group $H^n(G_n;\mF_2)$.
   This concludes the argument.
 \end{proof}

 Our next aim is to relate the classes $\beta_n$ introduced in
 Definition \ref{def:define_beta} as the coefficients of the power series
 $d_C(t_\mu/a_\mu)$ to the geometrically defined classes $\zeta_n$.
 To this end, we shall make use of the `division operator'.

\begin{construction}[Division operator]
  We let $(X,a,t)$ be an oriented $\elRO$-algebra.
  For an elementary abelian 2-group $K$ and $k\in\mZ$, we define an additive map
  \begin{equation}\label{eq:define_Gamma}
    \Gamma\ :\ X(K\times C)_k\ \to\  X(K\times C)_{k+1}   \ ;
  \end{equation}
  we refer to $\Gamma$ as a {\em division operator}.
  We write $i_1:K\to K\times C$ and $p_1:K\times C\to K$ for the embedding of the first factor,
  and the projection to the first factor, respectively.  
  For $u\in X(K\times C)_k$, we have the relation
  \[ i_1^*(t_2\cdot( u + p_1^*(i_1^*(u)))) \ = \  i_1^*( u) + i_1^*(p_1^*(i_1^*(u)))\ = \ 0 \ . \]
  So there is a unique class $\Gamma(u)\in X(K\times C)_{k+1}$ such that
  \begin{equation}\label{eq:defining_relation_Gamma}
    a_2\cdot \Gamma(u)\ = \ t_2\cdot( u + p_1^*(i_1^*(u)))    
  \end{equation}
  in the group $X(K\times C, (k+1)-p_2)$.
\end{construction}

The following proposition relates the division operator to the
power series expansion $d_K:\Phi^{K\times C}X\to (\Phi^K X)\lr{\theta}$
that we introduced in Construction \ref{con:d_K}.

\begin{prop}\label{prop:d_K consistent}
  Let $(X,a,t)$ be an oriented $\elRO$-algebra.
  For every elementary abelian 2-group $K$ and all homogeneous elements $u\in X(K\times C)$,
  the power series expansion of $u/1$ is given by the formula
  \[  d_K(u/1)\ =\ {\sum}_{n\geq 0}\, i_1^*(\Gamma^n(u))/1 \cdot \theta^n\ . \] 
\end{prop}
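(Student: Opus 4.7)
The plan is to reduce the claim to a single first-order identity in $\Psi^K X$ and then iterate. First, I would observe that because $u$ is integer-graded, the class $u/1$ already lies in $\Psi^K X$ (before inverting $t_2/a_2$), so by the commutative diagram \eqref{eq:d_K diagram} the left-hand side $d_K(u/1)$ agrees with $\delta_K(u/1) \in (\Phi^K X)\dbr{\theta} \subset (\Phi^K X)\lr{\theta}$. Thus it suffices to work with $\delta_K$.

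The next step is to rewrite the defining relation \eqref{eq:defining_relation_Gamma} of $\Gamma$ inside $\Psi^K X$. Since $t_2$ is a unit in $\Psi^K X$ and we are in characteristic $2$, dividing the relation $a_2 \cdot \Gamma(u) = t_2 \cdot (u + p_1^*(i_1^*(u)))$ by $t_2$ and using $e_2 = a_2/t_2$ yields
\[ u \ = \ p_1^*(i_1^*(u)) \ + \ e_2 \cdot \Gamma(u) \]
as an identity in $\Psi^K X$. Now I apply the $\Phi^K X$-algebra map $\delta_K$. Using $\delta_K(e_2) = \theta$ and the fact that $\delta_K \circ p_1^* = \Id_{\Phi^K X}$ (by definition $p_1^*$ is the chosen section), this gives
\[ \delta_K(u/1) \ = \ i_1^*(u)/1 \ + \ \theta \cdot \delta_K(\Gamma(u)/1) . \]

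The final step is iteration. Applying the same identity to $\Gamma(u), \Gamma^2(u), \dots, \Gamma^{N-1}(u)$ in turn produces
\[ \delta_K(u/1) \ = \ \sum_{n=0}^{N-1} i_1^*(\Gamma^n(u))/1 \cdot \theta^n \ + \ \theta^N \cdot \delta_K(\Gamma^N(u)/1) . \]
Since $\delta_K(\Gamma^N(u)/1)$ lies in $(\Phi^K X)\dbr{\theta}$ (no negative powers of $\theta$), the remainder term belongs to $\theta^N \cdot (\Phi^K X)\dbr{\theta}$, which vanishes in the $\theta$-adic limit. Passing to the limit $N \to \infty$ in $(\Phi^K X)\dbr{\theta}$ yields the desired formula.

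There is no serious obstacle here; the only thing to verify carefully is that the identity $u = p_1^*(i_1^*(u)) + e_2 \Gamma(u)$ really holds in $\Psi^K X$ (as opposed to only after further localization), which follows because $t_2$ is already inverted in $\Psi^K X$ and both sides lie in the integer-graded part. Everything else is a formal consequence of $\delta_K$ being the power series expansion in $e_2$ with respect to the section $p_1^*$, as axiomatized in Proposition \ref{prop:power expansion}.
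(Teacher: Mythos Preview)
Your proof is correct and follows essentially the same approach as the paper: both rewrite the defining relation of $\Gamma$ as $u = p_1^*(i_1^*(u)) + e_2\cdot\Gamma(u)$ in $\Psi^K X$, apply the power series expansion, and then iterate. The only cosmetic difference is that the paper phrases the iteration as an induction showing the formula holds modulo $\theta^m$ for all $m$, whereas you write out the partial sum with remainder $\theta^N\cdot\delta_K(\Gamma^N(u)/1)$ and pass to the $\theta$-adic limit; these are the same argument.
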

\begin{proof}
  The class $u/1$ belongs to $\Psi^K X$,
  so the Laurent power series $d_K(u/1)$ is contained in the integral subring
  $(\Phi^K_*X)\dbr{\theta}$, by construction.
  We will show that the formula for $d_K(u/1)$
  holds in the ring $(\Phi^K_*X)\dbr{\theta}$ modulo $\theta^m$, for all $m\geq 0$.
  We argue by induction on $m$, starting with $m=0$, where there is
  nothing to show.
  Now we suppose that $m\geq 1$.
  By the inductive hypothesis, $d_K(\Gamma(u)/1)$ is congruent modulo $\theta^{m-1}$ to
  \[  {\sum}_{n\geq 0}\, i_1^*(\Gamma^n(\Gamma(u)))/1 \cdot \theta^n\ . \] 
  Hence
  \begin{align*}
    d_K(u/1)\ _{\eqref{eq:defining_relation_Gamma}}
    &= \ d_K( (p_1^*(i_1^*(u)) + e_2\cdot \Gamma(u))/1) \\
    &= \ i_1^*(u)/1\ + \ \theta\cdot d_K(\Gamma(u)/1) \\
    &\equiv \  i^*(u)/1 +  \theta\cdot  {\sum}_{n\geq 0}\, i_1^*(\Gamma^n(\Gamma(u)))/1 \cdot \theta^n    \text{\qquad modulo $\theta^m$}\\
    &= \  {\sum}_{n\geq 0}\, i_1^*(\Gamma^n(u))/1 \cdot \theta^n\
  \end{align*}
  This completes the inductive step, and hence the proof.  
\end{proof}

For equivariant bordism, the division operator 
$\Gamma:\cN^{K\times C}_k\to\cN^{K\times C}_{k+1}$
has a geometric interpretation that we now recall.
Variations and special cases of the following proposition
have been used in the literature on equivariant bordism,
so we make no claim to originality.
For example, the special case $K=1$ is treated in 
\cite[Lemma 25.3]{conner:differentiable-2nd}.

\begin{prop}\label{prop:geometric_Gamma}
  Let $K$ be an elementary abelian 2-group.
  Let $M$ be a $k$-dimensional smooth closed $(K\times C)$-manifold.
 The class $\Gamma[M]$ is represented by the $(k+1)$-manifold
  \[ \mR\times_\mZ M \ = \ (\mR\times M)/(x,m)\sim(x+1, (-1,1)\cdot m)\]
  with $(K\times C)$-action by
  \[ (\kappa,\kappa')\cdot [x,m]\ = \ [\kappa' \cdot x,(\kappa,1)\cdot m]\ . \]
\end{prop}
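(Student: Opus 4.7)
The plan is to verify directly that the geometric representative $[\mR \times_\mZ M, \tau]$ satisfies the defining relation
\[ a_2 \cdot [\mR \times_\mZ M]\ =\ t_2 \cdot \bigl([M] + p_1^*(i_1^*[M])\bigr) \]
in $\widetilde{\cN}_{k+1}^{K\times C}(S^{p_2})$; since multiplication by $a_2$ is injective, this characterizes $\Gamma[M]$ uniquely. The left-hand side is represented by the pair $(\mR\times_\mZ M,\, c_0)$, where $c_0$ is the constant map to $0\in S^{p_2}$. The key move is to replace $c_0$ by the $(K\times C)$-equivariantly homotopic but transverse map
\[ h\colon \mR\times_\mZ M \to S^{p_2},\qquad h([x,m]) = \sin(2\pi x), \]
which is manifestly $K$-invariant, $C$-antiequivariant, and well-defined on the quotient. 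The straight-line homotopy $h_s = s \cdot h$ stays inside the affine chart $\mR^{p_2}\subset S^{p_2}$ and connects $c_0$ to $h$.

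Next, $h$ is transverse to $0$, and its preimage splits as two embedded copies of $M$:
\[ h^{-1}(0)\ =\ \bigl(\{0\}\times M\bigr)\ \sqcup\ \bigl(\{1/2\}\times M\bigr). \]
A direct calculation of the induced $(K\times C)$-action on each slice shows that the slice at $x=0$ carries the trivial $C$-action (so it is isomorphic to $p_1^*(i_1^*M)$ as a $(K\times C)$-manifold), whereas at $x=1/2$ the monodromy identification $[1/2,m] = [-1/2,(1_K,-1_C)\cdot m]$ restores the original $C$-action, giving $M$. In both cases, the equivariant normal bundle is trivial of type $p_2$.

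Finally, I appeal to equivariant transversality in the form of a tubular-neighborhood collapse: choose a $(K\times C)$-equivariant tubular neighborhood $U\cong h^{-1}(0)\times\mR^{p_2}$, and bord $(\mR\times_\mZ M, h)$ to the representative that agrees with the fiber projection $\pi_2$ inside $U$ and maps the complement to the basepoint $\infty\in S^{p_2}$. The resulting class equals $[h^{-1}(0)\times S^{p_2}, \pi_2]$, which by construction of the external product and the representative of $t_2$ is exactly $t_2 \cdot [h^{-1}(0)] = t_2 \cdot \bigl([M] + p_1^*(i_1^*[M])\bigr)$. The main step requiring care is this final localization to a tubular neighborhood: one must exhibit an explicit equivariant bordism interpolating between the two representatives inside $S^{p_2}$, which works because $h$ is bounded on the compact manifold $\mR\times_\mZ M$ and therefore avoids a neighborhood of $\infty$, so that straight-line contractions in the affine chart are available throughout.
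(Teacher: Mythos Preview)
Your argument is correct and takes a genuinely different route from the paper. The paper proves the identity $a_2\cdot[\mR\times_\mZ M]=t_2\cdot([M]+p_1^*(i_1^*[M]))$ by passing to $\Phi^{K\times C}\cN$ via the injective map and then comparing both sides under the Conner--Floyd fixed point isomorphism $\varphi_{K\times C}$: it computes the $(K\times C)$-fixed components of the mapping torus (at $x=0$ and $x=1/2$) together with their normal data, and matches them against those of $S^{p_2}\times(M\amalg p_1^*(i_1^*M))$. Your approach instead stays entirely on the level of geometric representatives: you homotope the constant reference map to the transverse map $h=\sin(2\pi\,\cdot\,)$, read off the same two slices directly as $h^{-1}(0)$, and then invoke a Pontryagin--Thom localization to identify $[W,h]$ with $t_2\cdot[h^{-1}(0)]$. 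Your method is more hands-on and avoids the fixed point machinery; the paper's method is shorter given that $\varphi_{K\times C}$ is already available.

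One point to tighten in your last paragraph: the map you describe as ``agreeing with $\pi_2$ on $U$ and sending the complement to $\infty$'' is not continuous as stated, since $\pi_2$ takes values in $\partial D^{p_2}$ on $\partial U$, not at $\infty$. What you want is to postcompose $\pi_2$ with an equivariant diffeomorphism $D^{p_2}\setminus\partial D^{p_2}\cong\mR^{p_2}$ sending $\partial D^{p_2}$ to $\infty$. Relatedly, your justification via ``straight-line contractions in the affine chart'' only gets you from $h$ to a map that agrees with the (rescaled) fiber projection near $N$; the second half of the homotopy---pushing the complement of $U$ out to $\infty$---uses instead that $S^{p_2}\setminus\{0\}$ is $(K\times C)$-equivariantly contractible to $\infty$, which lies outside the affine chart. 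The cleanest way to package the whole step is via the excision isomorphism $\widetilde{\cN}^{K\times C}_*(S^{p_2})\cong\cN^{K\times C}_*(D^{p_2},\partial D^{p_2})$, under which $[W,h]$ is visibly represented by the tubular neighborhood $(N\times D^{p_2},N\times\partial D^{p_2},\pi_2)$, i.e., by $t_2\cdot[N]$.
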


\begin{proof}
 The class $t_2\cdot ([M] +p_1^*(i_1^*[M]))$
  is represented by the $(K\times C)$-manifold
  $S^{p_2}\times(M\amalg p_1^*(i_1^*M))$ with the first projection as reference map to $S^{p_2}$.
  The class $a_2\cdot [\mR\times_\mZ M]$ is represented by the $(K\times C)$-manifold
  $\mR\times_\mZ M$ with the constant map with value 0 as reference map to $S^{p_2}$.
  Since all $a$-classes are regular, the map
  $\widetilde{\cN}_*^{K\times C}(S^{p_2})\to \Phi^{K\times C}_*\cN$
  is injective. Moreover, the fixed point map \eqref{eq:geometric fix iso}
  \[ \varphi_{K\times C}\ : \ \Phi^{K\times C}_*\cN\ \xra{\ \iso \ } \
    {\bigoplus}_{j\geq 0}\,\cN_{*-j}(G r_j^{K\times C,\perp}) \]
  is bijective.
  So it suffices to show that the two representatives for classes in $\widetilde{\cN}^{K\times C}_{k+1}(S^{p_2})$
  have the same fixed point invariants.
  
  The $(K\times C)$-fixed points of the mapping torus $\mR\times_{\mZ} M$ decompose into two parts:
  \[  (\mR\times_\mZ M)^{K\times C}\ \iso \ M^{K\times C}\cup M^{K\times 1} \]
  The first summand is embedded via
\[ M^{K\times C}\ \to \ \mR\times_\mZ M\ , \quad x \ \longmapsto \ [1/2,x] \ ;\]
  its normal bundle is isomorphic to the Whitney sum of the
  normal bundle of $M^{K\times C}$ inside $M$ and the trivial line bundle associated with
  the representation $p_2$.
  The second summand is embedded via
  \[ M^{K\times 1}\ \to \ \mR\times_\mZ M\ , \quad x \ \longmapsto \ [0,x]\ ;\]
  its normal bundle is isomorphic to the Whitney sum of
  the inflation along $p_1:K\times C\to K$ of
  the normal bundle of $M^{K\times 1}$ inside $M$, 
  and the trivial line bundle associated with the representation $p_2$.
  The $(K\times C)$-fixed points of $S^{p_2}\times(M\amalg p_1^*(i_1^*M))$ over $0\in S^{p_2}$
  are
  \[ \{0\}\times (M\amalg p_1^*(i_1^*M))^{K\times C}\ \iso \ M^{K\times C} \cup M^{K\times 1}\ ;  \]
  these are diffeomorphic fixed points with isomorphic normal bundles. This proves the claim.
\end{proof}

\begin{theorem}\label{thm:C-fix N:beta}\ 
  \begin{enumerate}[\em (i)]
  \item  In the oriented $\elRO$-algebra $(\cN,a,t)$, the classes $\beta_n$
    and $\zeta_n$ are related by the formula
    \[ \zeta_n \ = \ \beta_n + \beta_0 \beta_{n-1}  \]
    in $\Phi^C_{n+1}\cN$, for $n\geq 1$.
  \item
    The classes $\beta_n$ for $n\geq 0$ form a set of polynomial generators
    of the $C$-geometric fixed points of equivariant bordism as an algebra over $\cN_*$.
  \end{enumerate}
\end{theorem}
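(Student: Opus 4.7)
I would first translate the defining equation for the correction class $z\in\cN(C\times C)_2$ (from the example following Definition \ref{def:additive}),
\[ a_1t_\mu t_2\ +\ t_1a_\mu t_2\ +\ t_1 t_\mu a_2\ =\ z\cdot a_1 a_\mu a_2 \]
in $\cN(C\times C,2-(p_1\oplus\mu\oplus p_2))$, into an identity in the ring $\Psi^C\cN$. Dividing by $a_1 a_\mu t_2$ (invertible in $\Psi^C\cN$) and rearranging modulo $2$ yields
\[ \frac{t_\mu}{a_\mu}\ =\ p_1^*\!\left(\frac{t}{a}\right)\ +\ e_2\cdot\frac{z}{1}\ +\ e_2\cdot p_1^*\!\left(\frac{t}{a}\right)\cdot\frac{t_\mu}{a_\mu}. \]
Applying the power series expansion $\delta_C\colon\Psi^C\cN\to(\Phi^C\cN)\dbr{\theta}$ of Construction \ref{con:d_K}, using the definition $\delta_C(t_\mu/a_\mu)=\sum_{n\geq 0}\beta_n\theta^n$, the $\Phi^C\cN$-linearity of $\delta_C$ (which gives $\delta_C(p_1^*(t/a))=\beta_0$ and $\delta_C(e_2)=\theta$), and Proposition \ref{prop:d_K consistent} for $\delta_C(z/1)=\sum_{n\geq 0}i_1^*(\Gamma^n(z))/1\cdot\theta^n$, then comparing coefficients of $\theta^n$ for $n\geq 1$ delivers the central identity
\[ \beta_n\ +\ \beta_0\beta_{n-1}\ =\ i_1^*(\Gamma^{n-1}(z))/1\qquad\text{in }\Phi^C_{n+1}\cN. \]

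It then remains to identify the right hand side with $\zeta_n=[W_{n+1},\tau]/1$. For this I would first exhibit $z$ geometrically as $[\mR P^2,\alpha]$ for an appropriate $(C\times C)$-action $\alpha$ with three isolated fixed points of tangent representations $p_1\oplus\mu$, $p_1\oplus p_2$, $\mu\oplus p_2$ (mirroring the three summands in the additivity defect); this can be verified by checking the defining equation at the level of Conner--Floyd fixed-point data inside $\widetilde{\cN}^{C\times C}_2(S^{p_1\oplus\mu\oplus p_2})$. Next I would apply Proposition \ref{prop:geometric_Gamma} iteratively: each application of $\Gamma$ adjoins a fresh $\mR$-factor and a $\mZ$-identification twisted by a first-factor generator of $C\times C$. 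Tracking how the resulting $\mZ^{n-1}$-identifications interact with one another and with the residual $(C\times C)$-action exhibits $\Gamma^{n-1}(z)$ as a twisted quotient $\mR^{n-1}\times_{G_{n-1}}\mR P^2$, in which the semidirect-product structure $G_{n-1}=G_{n-2}\ltimes\mZ$ emerges automatically. Restriction along $i_1^*$ removes the second-factor $C$-action on the $\mR$-coordinates, and, after a compatible change of coordinates on the $\mR P^2$-fibre if necessary, produces exactly $[W_{n+1},\tau]$.

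\textbf{Part (ii).} With part (i) in hand, $\beta_n=\zeta_n+\beta_0\beta_{n-1}$ expresses $\beta_n$ as $\zeta_n$ plus a product of two classes of strictly smaller positive degree. Since Theorem \ref{thm:C-fix N} identifies $\{\beta_0\}\cup\{\zeta_n:n\geq 1\}$ as a polynomial generating set of $\Phi^C_*\cN$ over $\cN_*$, an upper-triangular change-of-variables argument inductively transfers the polynomial-generator property to $\{\beta_n:n\geq 0\}$. The main obstacle lies in part (i), specifically in the geometric identification of $\Gamma^{n-1}(z)$ with a twisted quotient matching $W_{n+1}$: while the algebraic recursion $\beta_n+\beta_0\beta_{n-1}=i_1^*(\Gamma^{n-1}(z))/1$ is produced cleanly from Proposition \ref{prop:d_K consistent}, the appearance of the group $G_{n-1}$ in the iterated mapping torus structure is a genuinely geometric phenomenon, requiring careful bookkeeping of how each successive $\mZ$-identification is twisted by the first-factor $C$-action on the $\mR P^2$-fibre.
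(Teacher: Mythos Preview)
Your proposal is correct and takes essentially the same approach as the paper: the paper's class $P=[\mR P^2]$ is exactly your additivity-defect class $z$ (the paper introduces it geometrically and then verifies the relation $P/1=(t_\mu/a_\mu)(t_2/a_2)+(t_1/a_1)(t_2/a_2)+(t_1/a_1)(t_\mu/a_\mu)$ via Conner--Floyd fixed-point data, whereas you go in the other direction), and the paper extracts the recursion by applying $d_C$ in $\Phi^{C\times C}\cN$ rather than $\delta_C$ in $\Psi^C\cN$, which is only a cosmetic difference. The iterated application of Proposition~\ref{prop:geometric_Gamma} to produce the $G_{n-1}$-quotient and the restriction along $i_1$ to recover $(W_{n+1},\tau)$ are handled at the same level of detail in both arguments.
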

\begin{proof}
  (i)
  We write $P\in \cN^{C\times C}_2$ for the bordism class of
  the projective space $\mR P^2$ with $(C\times C)$-action by
  \[ (\kappa,\kappa')\cdot [x:y:z]\ = \ [\kappa' x:y:\kappa z]\ . \]
  By Proposition \ref{prop:geometric_Gamma} and induction on $n$,
  the class $\Gamma^n(P)$ in $\cN^{C\times C}_{n+2}$ is represented by
  \[  \mR\times_\mZ(\mR^{n-1}\times_{G_{n-1}} \mR P^2) \ \iso \ \mR^n\times_{G_n} \mR P^2\ ,\]
  where $G_n$ acts on $\mR P^2$ by
  \[  (s_1,\dots,s_n)\cdot [x:y:z] \ = \ [x:y:(-1)^{s_n} z]\    ,\]
  and where $C\times C$ acts on the balanced product by 
  \[ (\kappa,\kappa')\cdot [s_1,\dots,s_n,[x:y:z]]\ = \  [ \kappa s_1,s_2\dots,s_n,[\kappa' x:y:z]]\ .  \]
  Restricting the action along $i_1:C\to C\times C$ shows that 
  $i_1^*(\Gamma^n(P))$ is represented by $(W_{n+2},\tau)$,
  i.e., $i_1^*(\Gamma^n(P))=\zeta_{n+1}$.

  Now we claim that the relation
  \begin{equation} \label{eq:P_is_distinguished}
      P/1\ = \    t_\mu/a_\mu\cdot t_2/a_2 + t_1/a_1\cdot t_2/a_2  + t_1/a_1\cdot t_\mu/a_\mu
  \end{equation}
  holds in the group $\Phi^{C\times C}_2\cN$.
  To prove this, we verify that the two classes have the same image
  under the fixed point isomorphism $\varphi_{C\times C}$, defined in \eqref{eq:geometric fix iso},
  that records the fixed points and their normal data.
  If $\lambda$ is a character of a finite group, and $V$ a $G$-representation
  that does not contain $\lambda$, then $\lambda$ is an isolated fixed point
  in the projective space $P(V\oplus \lambda)$. Moreover, the normal $G$-representation
  at this fixed point is isomorphic to $V\tensor\lambda$.
  The given $(C\times C)$-action on $\mR P^2$ is that of the projective space of
  $p_2\oplus\mR\oplus p_1$. It has three isolated fixed points, with normal representations
  $p_1\oplus\mu$, $p_1\oplus p_2$,  and  $\mu\oplus p_2$, respectively.
  
  For a nontrivial $A$-character $\lambda$,
  the class $t_\lambda\in \widetilde{\cN}^A_1(S^\lambda)$ is represented by
  the identity of $S^\lambda$, which has an isolated fixed point over
  the non-basepoint, with normal representation $\lambda$.
  So the fixed point isomorphism \eqref{eq:geometric fix iso}
  takes the class $t_\lambda/a_\lambda\in\Phi^A_1\cN$ to the element of $\cN_0(G r_1^{A,\perp})$
  in the summand for $j=1$ represented by $\lambda$, considered as a point in $G r_1^{A,\perp}$.
  So the fixed point invariant of $P/1$ is
    \begin{align*}
    \varphi_{C\times C}(P/1)\
    &= \  [\mu] \cdot [p_2] + [p_1]\cdot[p_2] + [p_1]\cdot [\mu]\\
    &=\ \varphi_{C\times C}(t_\mu/a_\mu\cdot t_2/a_2 + t_1/a_1\cdot t_2/a_2 +
      t_1/a_1\cdot t_\mu/a_\mu)  \ . \end{align*}
  This proves the relation \eqref{eq:P_is_distinguished}.

  Now we apply the Laurent power series expansion $d_C:\Phi^{C\times C}\cN\to (\Phi^C\cN)\lr{\theta}$
  to \eqref{eq:P_is_distinguished}.
  The ring  $\Phi^{C\times C}\cN$ is an algebra over $\Phi^C \cN$
  via the inflation $p_1^*$, so $d_C(t_1/a_1)=d_C(p_1^*(t/a))=t/a=\beta_0$.
  Moreover, $d_C(t_2/a_2)=\theta^{-1}$. So
  \begin{align*}
    d_C(P/1)\
    &= \  d_C(t_\mu/a_\mu\cdot t_2/a_2 + t_1/a_1\cdot t_2/a_2 + t_1/a_1\cdot t_\mu/a_\mu)\\
    &= \ ( d_C(t_\mu/a_\mu)+\beta_0) \cdot \theta^{-1} + \beta_0\cdot d_C(t_\mu/a_\mu)  \
    =\ \sum_{n\geq 0} (\beta_{n+1} + \beta_0 \beta_n)\cdot \theta^n\ .
  \end{align*}
  Proposition \ref{prop:d_K consistent} yields the relation
  \[  d_C(P/1)    \ = \  {\sum}_{n\geq 0}\, i_1^*(\Gamma^n(P))/1 \cdot \theta^n
    \ = \  {\sum}_{n\geq 0}\, \zeta_{n+1}\cdot \theta^n\ .\]
  Comparing coefficients of $\theta^n$ yields 
  $\zeta_{n+1}=\beta_{n+1} + \beta_0 \beta_n$ for all $n\geq 0$, as claimed.

  (ii)
  The classes $t/a=\beta_0$ and $\zeta_n/1$ for $n\geq 1$
  are polynomial $\cN_*$-algebra generators of $\Phi^C\cN$ by Theorem \ref{thm:C-fix N}.
  Since $\beta_n$ is congruent to $\zeta_n$ modulo
  $(\beta_0,\dots,\beta_{n-1})$, by part (i), also the classes $\beta_n$ for $n\geq 0$
  form polynomial generators for $\Phi^C_* \cN$ as an $\cN_*$-algebra.
\end{proof}

We are now ready to prove the main theorem of this paper.

\begin{theorem}[Universal property of equivariant bordism]\label{thm:main}
  The oriented $\elRO$-algebra $\cN$ of equivariant bordism is
  an initial oriented $\elRO$-algebra.
\end{theorem}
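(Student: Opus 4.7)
The plan is to apply Theorem~\ref{thm:fix_criterion}: I will build a morphism of $\el_2^{\epi}$-algebras $f^\bullet\colon \Phi^\bullet\cN \to \Phi^\bullet X$ for an arbitrary oriented $\elRO$-algebra $(X, \bar a, \bar t)$, verify its two compatibility conditions, and let the criterion produce the desired morphism $\cN \to X$.

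First I would construct $f^\bullet$. By Theorem~\ref{thm:bordism_properties}(ii) the 2-torsion formal group law on $\cN_*$ is initial, and by Theorem~\ref{thm:fgl} the graded ring $X(1)_*$ carries a 2-torsion formal group law, so Quillen's theorem yields a unique graded ring map $\phi\colon \cN_* \to X(1)_*$ respecting them. Since Theorem~\ref{thm:C-fix N:beta}(ii) identifies $\Phi^C_*\cN$ with the polynomial algebra $\cN_*[\beta_0,\beta_1,\ldots]$, I extend $\phi$ to a ring homomorphism $f^C\colon \Phi^C_*\cN \to \Phi^C_* X$ sending each $\beta_n^\cN$ to its counterpart $\beta_n^X \in \Phi^C_{n+1} X$ from Definition~\ref{def:define_beta}. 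Theorem~\ref{thm:bordism_properties}(iii) presents $\Phi^A_*\cN$ as the $\cN_*$-tensor product of copies of $\Phi^C_*\cN$ inflated along the nontrivial $A$-characters, so I define $f^A$ on the $\lambda$-th tensor factor to be $\lambda^* \circ f^C$. Compatibility of inflation with character composition makes $f^\bullet$ a morphism of $\el_2^{\epi}$-algebras, and condition~(a) of Theorem~\ref{thm:fix_criterion} is immediate from the identity $\beta_0 = t/a$ on both sides.

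The hard part will be verifying condition~(b), the commutativity of the $d_K$-square. Since both legs are ring homomorphisms, it suffices to check the identity on generators of $\Phi^{K \times C}_*\cN$ over $\Phi^K_*\cN$; Theorem~\ref{thm:bordism_properties}(iii) lets me choose these as $\nu^*(\beta_n^\cN)$ for $\nu = \lambda_K \cdot p_2$ with $\lambda_K \in \Hom(K, C)$. When $\lambda_K$ is nontrivial, the map $\alpha = \lambda_K \times \mathrm{id}_C\colon K \times C \to C \times C$ is an epimorphism with $\mu\alpha = \nu$, and the uniqueness clause of Proposition~\ref{prop:power expansion}(ii) produces the naturality identity $\delta_K \circ \alpha^* = \lambda_K^*\dbr{\theta} \circ \delta_C$. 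Inflating Theorem~\ref{thm:determine_betas}(ii) along $\lambda_K$ via this naturality then yields the closed formula
\[
\sum_{n \geq 0} \delta_K(\nu^*(\beta_n))\,\xi^n \ = \ \sum_{n \geq 0} \lambda_K^*(\beta_n)\, F(\theta,\xi)^n
\]
in $(\Phi^K X)\dbr{\theta,\xi}$, valid in both $\cN$ and $X$. The degenerate case $\lambda_K = 0$, $\nu = p_2$, is handled analogously using Theorem~\ref{thm:determine_betas}(i) together with the corresponding naturality along $K \to 1$. Because $f^\bullet$ matches formal group laws (via Quillen) and $\beta$-classes (by construction), both identities intertwine with $f^\bullet$, establishing condition~(b).

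Uniqueness should come essentially for free. Any morphism $g\colon \cN \to X$ of oriented $\elRO$-algebras induces $\phi$ on the integer-graded part $\cN_*$ by initiality. Since $d_C$ is canonically defined from the $\elRO$-structure and $g$ preserves $\bar t/\bar a$, applying $g^{C \times C}$ to the defining relation $d_C(t_\mu/a_\mu) = \sum \beta_n \theta^n$ forces $g^C(\beta_n^\cN) = \beta_n^X$. Inflation compatibility and Theorem~\ref{thm:bordism_properties}(iii) pin down every $g^A$, and Proposition~\ref{prop:generation} together with the preservation of pre-Euler and inverse Thom classes recovers $g$ on all of $\cN(A, \star)$ from its effect on integer-graded subrings. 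Hence $f = g$, and $\cN$ is initial.
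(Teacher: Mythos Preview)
Your proposal is correct and follows essentially the same strategy as the paper: construct $f^\bullet$ on geometric fixed points via Quillen's theorem, the polynomial freeness of $\Phi^C\cN$ on the $\beta_n$, and the tensor product description of $\Phi^A\cN$, then verify the hypotheses of Theorem~\ref{thm:fix_criterion} using Theorem~\ref{thm:determine_betas}. The only organizational difference is that you check condition~(b) by reducing to $\Phi^K\cN$-algebra generators (implicitly using that both legs agree on $p_1^*(\Phi^K\cN)$, which is the paper's ``second case'' of Claim~3 and should be stated), whereas the paper structures the verification as three separate claims; the mathematical content is the same.
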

\begin{proof}
  We need to show that there is a unique morphism
  of oriented $\elRO$-algebras from $(\cN,a,t)$
  to any given oriented $\elRO$-algebra $(Y,\bar a,\bar t)$.
  In accordance with the notation for pre-Euler and inverse Thom classes,
  we shall write $\bar\beta_n$ for the $\beta$-classes for the theory $(Y,\bar a,\bar t)$. 

  If $f:\cN\to Y$ is a morphism of oriented $\elRO$-algebras,
  then $f(1):\cN_*\to Y(1)_*$ must take the formal group law of $(\cN,a,t)$ to that of $(Y,\bar a,\bar t)$.
  The formal group of $(\cN,a,t)$ is initial by Theorem \ref{thm:bordism_properties} (ii),
  so $f(1)$ is uniquely determined by this property.
  For an elementary abelian 2-group $A$, 
  we make the geometric fixed point ring $\Phi^A_* Y$ into an $\cN_*$-algebra via
  the composite of $f(1)\colon \cN_*\to Y(1)_*$ and the inflation homomorphism
  $p_A^*:Y(1)_*\to\Phi^A_* Y$.  The morphism $\Phi^C f:\Phi^C\cN\to\Phi^C Y$ satisfies
  $(\Phi^C f)\circ p_C^*=p_C^*\circ f(1)$, so it is a morphism of graded $\cN_*$-algebras.
  Since the $\beta$-classes are defined intrinsically via the structure of
  oriented $\elRO$-algebras, the relation $(\Phi^C f)(\beta_n)=\bar\beta_n$ holds.
  By Theorem \ref{thm:C-fix N:beta} (ii),
  the classes $\beta_n$ generate $\Phi^C_*\cN$ as an $\cN_*$-algebra,
  so the morphism $\Phi^C f$ is uniquely determined.

  Now we let $A$ be any elementary abelian 2-group.
  Then for every nontrivial $A$-character $\lambda$, the relation
  $(\Phi^A f)\circ\lambda^*=\lambda^*\circ(\Phi^C f)$ holds.
  Since $\Phi^C f$ is uniquely determined and $\Phi^A \cN$ is generated
  as an $\cN_*$-algebra by the images of the homomorphisms $\lambda^*:\Phi^C\cN\to\Phi^A\cN$,
  by Theorem \ref{thm:bordism_properties} (iii), also the morphism $\Phi^A f$
  is uniquely determined.
  Since the maps $-/a_V:Y(A,k-V)\to \Phi^A_k Y$ are injective,
  the morphism $\Phi^A f$ uniquely determines the morphism of $I_A$-graded
  rings $f(A,\star):\cN(A,\star)\to Y(A,\star)$. This concludes the proof of uniqueness.

  To construct a morphism of oriented $\elRO$-algebras from $(\cN,a,t)$
  to $(Y,\bar a,\bar t)$, we turn the uniqueness argument around.  
  Since the $2$-torsion formal group law of the oriented $\elRO$-algebra $(\cN,a,t)$ is initial,
 there is a unique homomorphism of graded $\mF_2$-algebras $f^1\colon \cN_*\to Y(1)_*$
  that classifies the 2-torsion formal group law of $(Y,\bar a,\bar t)$.
  In other words, $f^1$ is the unique morphism that when applied
  to the coefficients of the 2-torsion formal group law of $(\cN,a,t)$
  yields the one of $(Y,\bar a,\bar t)$. 
  
  Again we make the geometric fixed point ring $\Phi^A Y$ into an $\cN$-algebra via
  the composite of $f^1\colon \cN_*\to Y(1)_*$ and the inflation homomorphism
  $p_A^*:Y(1)_*\to\Phi^A Y$. 
  By Theorem \ref{thm:C-fix N}, the geometric fixed points $\Phi^C_*\cN$
  are polynomial over $\cN_*$ on the classes $\beta_n$.
  So there is a unique $\cN_*$-algebra homomorphism
  \[ f^C\ : \ \Phi^C\cN \ \to \ \Phi^C Y\]
  that sends the class $\beta_n$ in $\Phi^C\cN$ to the class $\bar\beta_n$ in $\Phi^C Y$,
  for all $n\geq 0$.
  
  Now we let $\lambda$ be a nontrivial character of an elementary abelian 2-group $A$.
  Because $f^C\circ p_C^*=p_C^*\circ f^1$, the composite of
  \[ \cN_* \ \xra{\ p_C^* \ }\ \Phi^C \cN\ \xra{\ f^C\ } \  \Phi^C Y\ \xra{\ \lambda^*\ } \ \Phi^A Y\ \]
  equals  $p_A^*\circ f^1: \cN_*\to \Phi^A Y$.
  In particular, the composite $\lambda^*\circ f^C\circ p^*_C$ is independent of $\lambda$.
  By Theorem \ref{thm:bordism_properties} (iii), $\Phi^A \cN$ is the coproduct,
  in the category of commutative graded $\cN_*$-algebras,
  of copies of $\Phi^C \cN$, indexed by the set $A^\circ$ of nontrivial characters.
  So there is a unique morphism of graded rings
  \[ f^A \ : \ \Phi^A \cN \ \to \ \Phi^A Y \]
  such that $f^A\circ \lambda^*=\lambda^*\circ f^C$ for all nontrivial $A$-characters.

  We will now show that the collection of morphisms $f^\bullet=\{f^A:\Phi^A \cN\to\Phi^A Y\}_A$
  satisfies the hypotheses of Theorem \ref{thm:fix_criterion}.
  To show that the morphisms are compatible with inflations,
  we let $\alpha:B\to A$ be any epimorphism between elementary abelian 2-groups.
  Then
  \[ f^B\circ \alpha^*\circ \lambda^* \ = \
    f^B\circ (\lambda\alpha)^*\ =\
    (\lambda\alpha)^*\circ f^C \ =\ \alpha^*\circ \lambda^*\circ f^C \ =\
    \alpha^*\circ f^A\circ \lambda^*    \]
  for every nontrivial $A$-character $\lambda$.
  By Theorem \ref{thm:bordism_properties} (iii),
  the images of the homomorphisms $\lambda^*:\Phi^C \cN\to \Phi^A\cN$
  generate $\Phi^A \cN$ as an $\cN_*$-algebra,
  so this proves that $f^B\circ \alpha^*=\alpha^*\circ f^A$.
  
  Condition (a) of Theorem \ref{thm:fix_criterion}
  holds because $f^C(t/a)=f^C(\beta_0)=\bar\beta_0=\bar t/\bar a$.
  Verifying condition (b) is the most involved part of this proof,
  and we establish it through a sequence of claims.

  Claim 1: The following diagram commutes:
    \[ \xymatrix@C=15mm{
        \Phi^C \cN\ar[d]_{f^C}\ar[r]^-{d_1} &
        \cN_*\lr{\theta}\ar[d]^{f^1\lr{\theta}}\\
        \Phi^C Y\ar[r]_-{d_1}   & Y(1)\lr{\theta}  } \]
  Theorem \ref{thm:determine_betas} (i) provides the relation
  \[ F_\cN(\theta,\xi)\cdot\sum_{n\geq 0} d_1(\beta_n)\cdot \xi^n \ = \ 1  \]
  in the ring $\cN\lr{\theta}\dbr{\xi}$.
  We apply the homomorphism $f^1\lr{\theta}\dbr{\xi}$
  to this equation and exploit that
  the pushforward of the formal group law $F_\cN$ along $f^1:\cN_*\to Y(1)_*$ is $F_Y$. This yields
  \begin{align*}
    F_Y(\theta,\xi)\cdot\sum_{n\geq 0} f^1\lr{\theta}(d_1(\beta_n))\cdot \xi^n \
    &= \  1 \
    = \ F_Y(\theta,\xi)\cdot\sum_{n\geq 0} d_1(\bar\beta_n)\cdot \xi^n \
  \end{align*}
  in the ring $Y(1)\lr{\theta}\dbr{\xi}$.
  The second equality is Theorem \ref{thm:determine_betas} (i) for $(Y,\bar a,\bar t)$.
  Since multiplicative inverses are unique, the two infinite sums on both sides
  must be equal. 
  Comparing coefficients of $\xi^n$ yields
  $f^1\lr{\theta}(d_1(\beta_n))=d_1(\bar\beta_n)=d_1(f^C(\beta_n))$.
  Since the classes $\beta_n$ generate $\Phi^C\cN$ as an $\cN_*$-algebra,
  this proves that the diagram of Claim 1 indeed commutes.\smallskip

  Claim 2: The following diagram commutes:
    \[ \xymatrix@C=15mm{
        \Phi^C \cN\ar[d]_{f^C}\ar[r]^-{\mu^*} &
        \Phi^{C\times C} \cN \ar[r]^-{d_C} &
        (\Phi^C \cN)\lr{\theta}\ar[d]^{f^C\lr{\theta}}\\
        \Phi^C Y\ar[r]_-{\mu^*} & \Phi^{C\times C}Y\ar[r]_-{d_C}        
        & (\Phi^C Y)\lr{\theta}
      } \]
    Here, as before, $\mu:C\times C\to C$ is the multiplication.
  Theorem \ref{thm:determine_betas} (ii) provides the relation
  \[ \sum_{n\geq 0} d_C(\mu^*(\beta_n)) \cdot \xi^n\ =\
    \sum_{n\geq 0}\, \beta_n\cdot F_{\cN}(\theta,\xi)^n \]
  in the ring $(\Phi^C\cN)\dbr{\theta,\xi}$.
  We apply the homomorphism $f^C\dbr{\theta,\xi}$
  to this equation and exploit that
  the pushforward of the formal group law $F_\cN$ along
  $f^1:\cN_*\to Y(1)_*$ is $F_Y$. This yields
  \begin{align*}
    \sum_{n\geq 0} f^C\dbr{\theta}(d_C(\mu^*(\beta_n))) \cdot \xi^n\
    &=\ \sum_{n\geq 0}\, f^C(\beta_n)\cdot F_Y(\theta,\xi)^n \\
    &=\ \sum_{n\geq 0}\, \bar\beta_n\cdot F_Y(\theta,\xi)^n
      =\ \sum_{n\geq 0}\, d_C(\mu^*(\bar\beta_n))\cdot \xi^n \ .
  \end{align*}
  The third equality is Theorem \ref{thm:determine_betas} (ii) for $(Y,\bar a,\bar t)$.
  Comparing coefficients of $\xi^n$ yields
  \[ f^C\dbr{\theta}(d_C(\mu^*(\beta_n))) \ = \ d_C(\mu^*(\bar\beta_n))\
    = \ d_C(\mu^*(f^C(\beta_n)))\ .\]
  Because the classes $\beta_n$ generate $\Phi^C_*\cN$ as an $\cN_*$-algebra,
  this proves that the diagram of Claim 2 indeed commutes.\smallskip

  Claim 3:  
  For every elementary abelian 2-group $K$
  and every nontrivial character $\lambda:K\times C\to C$,
  the following diagram commutes:
    \[ \xymatrix@C=15mm{
        \Phi^C \cN\ar[d]_{f^C}\ar[r]^-{\lambda^*} &
        \Phi^{K\times C} \cN \ar[r]^-{d_K} &
        (\Phi^K \cN)\lr{\theta}\ar[d]^{f^K\lr{\theta}}\\
        \Phi^C Y\ar[r]_-{\lambda^*} & \Phi^{K\times C}Y\ar[r]_-{d_K}        
        & (\Phi^K Y)\lr{\theta}
    } \]
  We distinguish three cases, each based on a different kind of naturality argument.
  Firstly, if $\lambda=p_2$ is the projection to the second factor, then
  $d_K\circ \lambda^*=p_K^*\lr{\theta}\circ d_1$ by naturality of the power series
  expansions $d_K$ in the group $K$. So
  \begin{align*}
    f^K\lr{\theta}\circ d_K\circ \lambda^*
    &= \ f^K\lr{\theta}\circ p_K^*\lr{\theta} \circ d_1 \
      = \  p_K^*\lr{\theta}\circ f^1\lr{\theta} \circ d_1 \\
    _{\text{Claim 1}} &= \ p_K^*\lr{\theta}\circ d_1\circ  f^C \
                 = \ d_K\circ\lambda^*\circ f^C\ .
  \end{align*}
  
  Secondly, we suppose that $\lambda=\nu\circ p_1$ for some nontrivial $K$-character
  $\nu:K\to C$.
  By design, the composite $d_K\circ p_1^*:\Phi^K \cN \to \Phi^{K\times C} \cN$
  is the inclusion $\iota:\Phi^K \cN \to(\Phi^K \cN)\lr{\theta}$
  as the constant Laurent power series.
  So 
  \[  d_K\circ \lambda^*\ = \ d_K\circ p_1^*\circ \nu^*\ = \ \iota\circ\nu^*\ ,\]
  and hence
  \[ f^K\lr{\theta}\circ d_K\circ \lambda^*\ = \
    f^K\lr{\theta}\circ \iota\circ \nu^*
    \ = \ \iota\circ f^K \circ \nu^*
    \ = \ \iota\circ \nu^*\circ f^C 
    \ = \ d_K\circ \lambda^*\circ f^C\ .    \]
  
  The third and final case is when $\lambda=\mu\circ(\nu\times C)$
  for a nontrivial $K$-character $\nu$. Then
  \[  d_K\circ \lambda^*\ = \ d_K\circ (\nu\times C)^*\circ\mu^*\ = \
    \nu^*\lr{\theta}\circ d_C\circ \mu^* \]
  by naturality of the power series expansions $d_K$ in the group $K$. So
  \begin{align*}
    f^K\lr{\theta}\circ d_K\circ \lambda^*
      &= \ f^K\lr{\theta}\circ \nu^*\lr{\theta}\circ d_C\circ\mu^*\
     = \  \nu^*\lr{\theta}\circ f^C\lr{\theta}\circ d_C\circ\mu^*\\
    _{\text{Claim 2}}&= \ \nu^*\lr{\theta}\circ d_C\circ\mu^*\circ f^C\
                       = \ d_K\circ\lambda^*\circ f^C\ .
  \end{align*}
    
  Now we can prove condition (b) of Theorem \ref{thm:fix_criterion}.
  For any nontrivial character $\lambda:K\times C\to C$,
  Claim 3 and the fact that $f^{K\times C}\circ\lambda^*=\lambda^*\circ f^C$
  show that the two homomorphisms
  \[ f^K\lr{\theta}\circ  d_K \ ,\ d_K\circ f^{K\times C}\ : \
    \Phi^{K\times C}\cN\ \to\  (\Phi^KY)\lr{\theta}\]
  agree after precomposition with $\lambda^*:\Phi^C \cN\to\Phi^{K\times C}\cN$.
  By Theorem \ref{thm:bordism_properties} (iii),
  the images of these homomorphisms $\lambda^*$
  generate $\Phi^{K\times C} \cN$ as an $\cN_*$-algebra,
  so this proves the desired relation
  $f^K\lr{\theta}\circ  d_K = d_K\circ f^{K\times C}$.
\end{proof}

\section{Invertibly oriented \texorpdfstring{$\elRO$}{el}-algebras and global 2-torsion group laws}
\label{sec:global-2-torsion group}

In this section we explain the relationship between the oriented $\elRO$-algebras discussed
in this paper and the global 2-torsion group laws
in the sense of the first author's paper \cite{hausmann:group_law}.
The bottom line is that global 2-torsion group laws `are'
oriented $\elRO$-algebras whose inverse Thom class is invertible.
Because inverse Thom classes can be universally inverted as described in
Construction \ref{con:t-localization}, this exhibits the category of
global 2-torsion group laws as a reflective subcategory
of the category of oriented $\elRO$-algebras.

\begin{defn}[Invertible inverse Thom classes]\label{def:invertible}
  An inverse Thom class $t$ of an orientable $\elRO$-algebra $X$ is {\em invertible}
  if for every nontrivial character $\lambda$
  of an elementary abelian 2-group $A$ and all $m\in I_A$, the multiplication map
  \[  t_\lambda \cdot - \ : \ X(A,m)\ \to \ X(A,m+1- \lambda) \]
  is an isomorphism.
  An {\em invertibly oriented} $\elRO$-algebra is an oriented $\elRO$-algebra
  whose inverse Thom class is invertible.
\end{defn}

An invertible inverse Thom class essentially collapses the $I_A$-gradings
to integer gradings: for every elementary abelian 2-group $A$,
the $I_A$-graded ring $X(A,\star)$ is a polynomial ring over its $\mZ$-graded
subring in the variables $t_\lambda$ for $\lambda\in A^\circ$:
\[ X(A,\star)\ = \ X(A)_*[t_\lambda\colon \lambda\in A^\circ]\ .\]
In the presence of an invertible inverse Thom class $t$, one can therefore translate
the structure and properties of an orientable $\elRO$-algebra into
structure and properties of the $\el_2$-algebra made up by the integer graded subrings.
For orientable $\elRO$-algebras arising from global ring spectra,
invertibility in the sense of Definition \ref{def:invertible} is
equivalent to invertibility in the $R O(C)$-graded homotopy ring,
see Proposition \ref{prop:RO-invertible_is_invertible}.

We recall from \cite[Definition 4.1]{hausmann:group_law}
that a {\em global 2-torsion group law}
is an $\el_2$-$\mF_2$-algebra $\cG$ equipped with a {\em coordinate},
i.e., a class $e\in \cG(C)$ such that for every nontrivial character
$\lambda$ of an elementary abelian 2-group $A$, the sequence
\[ 0  \ \to\ \cG(A)_{*+1}\ \xra{ e_\lambda\cdot -}\ \cG(A)_*\ \xra{\res^A_K}\ \cG(K)_*\ \to \ 0 \]
is exact, where $e_\lambda=\lambda^*(e)$ and $K$ is the kernel of $\lambda$.

As described in Construction \ref{con:t-localization},
we can turn an oriented $\elRO$-algebra $(X,a,t)$
into an $\el_2$-$\mF_2$-algebra $t^{-1}X$ by inverting all $t$-classes.
When $A=1$ is a trivial group, we have $(t^{-1} X)(1)_*=X(1)_*$.

\begin{theorem}\
  \begin{enumerate}[\em (i)]
  \item 
  For every oriented $\elRO$-algebra $(X,a,t)$,
  the $\el_2$-algebra $t^{-1}X$ is a global 2-torsion group law
  with respect to the coordinate $e=a/t$.  
\item
  The functor
  \begin{align*}
    t^{-1} \ : \ \text{\em (oriented $\elRO$-algebras)}\ &\to \ \text{\em (global 2-torsion group laws)}\\   
    (X,a,t)\ &\longmapsto \ (t^{-1}X,a/t)
  \end{align*}
  has a fully faithful right adjoint.
  The essential image of the right adjoint
    is the subcategory of invertibly oriented $\elRO$-algebras.
  \end{enumerate}
\end{theorem}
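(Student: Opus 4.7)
For part (i), the strategy is to localize the defining exact sequence of an orientable $\elRO$-algebra. Take $m = k+1$ (integer grading) in the defining short exact sequence; both $a_\lambda \cdot -$ and $\res^A_K$ respect the multiplicative system of $t$-classes (the former because multiplication commutes, the latter because $\res^A_K(t_\mu) = t_{\mu|_K}$ is either another $t$-class or the unit $1$), so they descend to the localization $t^{-1}X$, where the localized sequence remains exact because localization is exact. In $t^{-1}X$, multiplication by $t_\lambda^{-1}$ identifies $(t^{-1}X)(A, k+1-\lambda)$ with $(t^{-1}X)(A)_k$; under this identification the map $a_\lambda \cdot -$ becomes multiplication by $e_\lambda = a_\lambda/t_\lambda$, while the restriction becomes the integer-graded restriction (using $\res^A_K(t_\lambda) = 1$). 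This yields the short exact sequence required of a global 2-torsion group law with coordinate $e = a/t$.

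For part (ii), the plan is to exhibit the right adjoint as the composite of an equivalence of categories between global 2-torsion group laws and invertibly oriented $\elRO$-algebras, followed by the fully faithful inclusion of the latter into all oriented $\elRO$-algebras. In one direction, send $(X, a, t)$ to its integer graded part $(X(\bullet)_*, a/t)$; this is a global 2-torsion group law by the argument of part (i), applied to $X$ itself (whose $t$-localization equals $X$ since $t$ is already invertible). In the opposite direction, send $(\cG, e)$ to the $\elRO$-algebra $R(\cG, e)$ defined by $R(\cG, e)(A, k-V) = \cG(A)_{k-|V|}$ for $V$ with $V^A = 0$, with multiplication transported from $\cG$, inverse Thom class $t$ the tautological generator, and pre-Euler class $a = e \cdot t$. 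The exactness axiom for $R(\cG, e)$ reduces directly to the defining exactness of the global 2-torsion group law, and multiplication by $t_\lambda$ is by design a bijection of the underlying $\mF_2$-vector spaces, so $R(\cG, e)$ is invertibly oriented. Any invertibly oriented $\elRO$-algebra $X$ satisfies $X(A, \star) = X(A)_*[t_\lambda : \lambda \in A^\circ]$ with $t_\lambda$ invertible, hence is determined up to canonical isomorphism by its integer-graded subring and its coordinate, so the two functors are mutually inverse equivalences.

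With the equivalence in hand, the adjunction is a formal consequence of the universal property of localization: a morphism $X \to R(\cG, e)$ of oriented $\elRO$-algebras must send $t$ to an invertible class, so it factors uniquely through $t^{-1}X$, and via the equivalence this corresponds bijectively to a morphism $t^{-1}X \to (\cG, e)$ of global 2-torsion group laws. The right adjoint $R$ is then fully faithful (as part of an equivalence onto its image), and its essential image is the subcategory of invertibly oriented $\elRO$-algebras. I expect the main obstacle to be the careful construction of $R(\cG, e)$ as a lax symmetric monoidal functor on $\elRO^{\op}$: one must verify that the definition $R(\cG, e)(A, k-V) = \cG(A)_{k-|V|}$ is independent of the representative $(k, V)$ of $m \in I_A$ up to canonical isomorphism, and that restriction along arbitrary homomorphisms $\beta: B \to A$ is well-defined at the $I_A$-graded level (correctly splitting off any trivial isotypical summand of $\beta^*(V)$ and cancelling via the $t$-units). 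Once this bookkeeping is in place, orientability, invertibility, and the adjunction all follow formally.
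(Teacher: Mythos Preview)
Your proposal is correct and follows essentially the same strategy as the paper. For part (i), both you and the paper localize the defining exact sequence and reinterpret multiplication by $a_\lambda$ as multiplication by $e_\lambda$ after inverting $t$. For part (ii), your right adjoint $R(\cG,e)(A,k-V)=\cG(A)_{k-|V|}$ is precisely the paper's construction $\cG[t](A,\star)=\cG(A)[t_\lambda:\lambda\in A^\circ]$, just presented degreewise rather than as a polynomial algebra; the two are canonically isomorphic since an $I_A$-homogeneous element of degree $k-V$ in $\cG(A)[t_\lambda]$ is uniquely of the form $g\cdot t_V$ with $g\in\cG(A)_{k-|V|}$.

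The only noteworthy organizational difference is that you first establish the equivalence between global 2-torsion group laws and invertibly oriented $\elRO$-algebras and then invoke the universal property of localization, whereas the paper writes down the adjunction unit $\eta_X(A,k-V)(x)=(x/t_V)\cdot t_V$ and counit directly and verifies the triangle identities. These are interchangeable packagings of the same content. One practical advantage of the paper's polynomial-algebra presentation is that the bookkeeping you flag as the main obstacle---well-definedness of the $\elRO$-functor structure on $R(\cG,e)$ and of restriction along arbitrary $\beta:B\to A$---becomes transparent: restriction is simply the ring map sending $t_\lambda\mapsto t_{\lambda\beta}$ (or to $1$ if $\lambda\beta$ is trivial), and the lax monoidal structure is the evident one on polynomial algebras.
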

\begin{proof}
  (i) We have
  $e_\lambda = \lambda^*(a/t) = a_\lambda/t_\lambda$ in  $(t^{-1}X)(A)_{-1}$.
  Because localization is exact,  the short exact sequences
  \[ 0 \ \to \ X(A,m+1)\ \xra{a_\lambda\cdot -}\ X(A,m+1-\lambda)\
    \xra{\res^A_K}\ X(K,\res^A_K(m))\ \to \ 0 \]
  for varying $m\in I_A$ become a short exact sequence
  \[  0 \ \to \ (t^{-1}X)(A)_{|m|+1}\ \xra{e_\lambda\cdot -}\ (t^{-1}X)(A)_{|m|}\
    \xra{\res^A_K}\  (t^{-1}X)(K)_{|m|}\ \to \ 0 \ .   \]
  This verifies the defining exactness property of a global 2-torsion group law.
  
  (ii) The right adjoint
  \[  \psi \ : \  \text{(global 2-torsion group laws)}\ \to\ \text{(oriented $\elRO$-algebras)} 
 \]
 to $t^{-1}$ is obtained as follows.
 Given any global 2-torsion group law $(\cG,e)$, we define an $\elRO$-algebra $\cG[t]$
  at $A$ by
  \[ \cG[t](A,\star)\ = \ \cG(A)[t_\lambda:\lambda\in A^\circ]\ . \]
  Here each $t_\lambda$ is a polynomial variable in degree $(A,1-\lambda)$,
  for each nontrivial $A$-character $\lambda$.
  If $\alpha:B\to A$ is a homomorphism between elementary abelian 2-groups,
  we define $\beta^*:X(A,\star)\to Y(B,\star)$ as the given restriction homomorphism
  $\beta^*:\cG(A)\to \cG(B)$ on coefficient rings, and by setting
  \[ \beta^*(t_\lambda)\ = \
    \begin{cases}
      t_{\lambda\beta} & \text{ if $\lambda\beta$ is nontrivial, and}\\
     \  1 & \text{ if $\lambda\beta$ is trivial.}
    \end{cases}
  \]
  We omit the straightforward verification that these data define an $\elRO$-algebra.
  By design, the polynomial generator $t$ in $\cG[t](C,\star)= \cG(C)[t]$
  is an invertible inverse Thom class of this $\elRO$-algebra.
  Moreover, the exactness property of the orientation $e$ shows that the class
  \[ a\ =\ e\cdot t\ \in \ \cG(C)[t]_{-\sigma}\ = \ \cG[t](C,-\sigma) \]
  is a pre-Euler class, so $\psi(\cG,e)=(\cG[t],e t,t)$ is an invertibly oriented $\elRO$-algebra.

  The unit of the adjunction $\eta_X:(X,a,t)\to \psi(t^{-1}X,a/t)=((t^{-1}X)[t],(a/t)\cdot t, t)$
  is defined at an elementary abelian 2-group $A$ by
  \[ \eta_X(A,k-V)(x) \ = \ (x/t_V)\cdot t_V \ ,\]
  where $k\in\mZ$, and $V$ is an $A$-representation with trivial fixed points.
  We omit the straightforward verification that this indeed defines
  a morphism of oriented $\elRO$-algebras.
  Moreover, if the inverse Thom class $t$ is invertible, then $\eta_X$ is an isomorphism.

  Since the orientation of $\psi(\cG,e)$ is invertible, localization away from $t$
  has no effect on the underlying rings, and the restriction of $\cG[t]$
  to the integer graded subrings returns the original $\el_2$-algebra $\cG$,
  with its original coordinate $(e t)/t=e$.  
  More formally: the functor $\psi$ is right inverse to the functor $t^{-1}$,
  and a natural isomorphism $\cG\to t^{-1}(\psi(\cG,e))$ is given by
  the identifications of the rings $\cG(A)$ as the integer graded subrings of $\cG(A)[t]$.
  We omit the verification that the inverse isomorphism
  $\epsilon_\cG:t^{-1}(\psi(\cG,e))\to \cG$
  and the natural transformation $\eta:\Id\to \psi\circ t^{-1}$
  satisfy the triangle equalities of an adjunction.
  In any adjunction for which the counit is a natural isomorphism,
  the right adjoint is fully faithful.
  
  The right adjoint $\psi$ takes values in invertibly oriented $\elRO$-algebras.
  And if $(X,a,t)$ is an invertibly oriented $\elRO$-algebra,
  then the adjunction unit $(X,a,t)\to \psi(t^{-1} X,a/t)$ is an isomorphism
  at all integer graded subrings, and hence an isomorphism of $\elRO$-algebras.
  Thus $(X,a,t)$ is in the essential image of the right adjoint.
\end{proof}

\begin{rk}
  As explained by the first author in \cite{hausmann:group_law}, 
  every global 2-torsion group law has a 2-torsion formal group law
  over its underlying ring. Every oriented $\elRO$-algebra also
  comes with a formal group law over its underlying ring,
  as explained in Theorem \ref{thm:fgl}.
  Comparing definitions reveals that for every oriented $\elRO$-algebra $(X,a,t)$,
  the formal group law of $(t^{-1}X, a/t)$ defined in \cite{hausmann:group_law}
  is the one that we introduced in Theorem \ref{thm:fgl}.

  The formal group law construction from a global 2-torsion group
  law also admits a fully faithful right adjoint;
  the right adjoint embeds the category of 2-torsion formal group laws as the full subcategory of
  {\em complete} global 2-torsion group laws, compare \cite[Example 4.7]{hausmann:group_law}.
  Altogether, the category of 2-torsion formal group laws embeds fully faithfully
  into the category of oriented $\elRO$-algebras,
  as the ones that are both invertibly oriented and complete.
  The following diagram of categories and functors summarizes the situation:
  \[ \xymatrix@C=8mm{
      \text{(2-torsion formal group laws)}\ar@<1.2ex>@{^(->}[d] \ar[r]^-\psi_-\iso
      & \text{(complete invertibly oriented $\elRO$-algebras)}\ar@<1.2ex>@{^(->}[d]  \\
      \text{(global 2-torsion group laws)}\ar[r]_-\iso^-\psi\ar@<1.2ex>[u]_{\dashv\ }
      & \text{(invertibly oriented $\elRO$-algebras)} \ar@<1.2ex>@{^(->}[d]\ar@<1.2ex>[u]_{\dashv\ }\\
      &  \text{(oriented $\elRO$-algebras)}\ar@<1.2ex>[u]_{\dashv\ }    } \]
\end{rk}

In Definition \ref{def:invertible} we defined when an inverse Thom class of an 
$\elRO$-algebra is invertible. For global ring spectra, invertibility is
equivalent to invertibility in the $R O(C)$-graded homotopy ring:

\begin{prop}\label{prop:RO-invertible_is_invertible}
  For every oriented global ring spectrum $(E,t)$, the following conditions are equivalent.
  \begin{enumerate}[\em (a)]
  \item The class $t$ is an $R O(C)$-graded unit,
    i.e., there exists a class $s\in \pi_{\sigma-1}^C(E)$ such that $s\cdot t=1$.
  \item The class $t$ is an invertible as an inverse Thom class of
    the orientable $\elRO$-algebra $(E^\sharp,a)$.
  \end{enumerate}
\end{prop}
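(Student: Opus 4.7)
The plan is to encode both conditions through a single $E$-module morphism of $C$-spectra whose equivalence can be detected by the effective data recorded in $E^\sharp$.

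Represent the inverse Thom class by a $C$-equivariant map $t\colon S^1\to E\sm S^\sigma$ and let $\tilde t\colon E\sm S^1\to E\sm S^\sigma$ denote its $E$-linearization, the composite $E\sm S^1\xrightarrow{E\sm t}E\sm E\sm S^\sigma\xrightarrow{\mu\sm S^\sigma}E\sm S^\sigma$. I first claim that condition (a) is equivalent to $\tilde t$ being a weak equivalence in the $C$-equivariant stable homotopy category. Given $s$ with $s\cdot t=1$, its $E$-linearization $\tilde s\colon E\sm S^\sigma\to E\sm S^1$ is a two-sided inverse of $\tilde t$, since composition with the unit $\eta\sm S^1$ recovers the product $s\cdot t = 1$ in $\pi_0^C(E)$, and analogously on the other side. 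Conversely, given an inverse $g$ of $\tilde t$, the class $s=g\circ(\eta\sm S^\sigma)\in[S^\sigma,E\sm S^1]^C=\pi_{\sigma-1}^C(E)$ satisfies $s\cdot t=1$, because under the standard suspension identification this product is represented by $(g\circ \tilde t)\circ(\eta\sm S^1)=\eta\sm S^1$, which represents $1\in \pi_0^C(E)$.

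For (a) $\Rightarrow$ (b), given $s$ with $s\cdot t=1$ and a nontrivial $A$-character $\lambda$, naturality of restriction yields $\lambda^*(s)\cdot t_\lambda = 1$ in $\pi_0^A(E)$. Writing any $m\in I_A$ as $m=k-V$ for a representation $V$ with trivial fixed points, multiplication by $t_\lambda$ sends $\pi_k^A(E\sm S^V)$ to $\pi_{k+1}^A(E\sm S^{V+\lambda})$, and multiplication by $\lambda^*(s)$ in the $R O(A)$-graded homotopy ring provides a two-sided inverse. This is precisely invertibility of $t$ as an inverse Thom class.

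For (b) $\Rightarrow$ (a), by the preliminary reformulation it suffices to show that $\tilde t$ is a weak equivalence of $C$-spectra. Since $C$ has only the subgroups $1$ and $C$, this amounts to bijectivity of $\pi_n^H(\tilde t)$ for $H\in\{1,C\}$ and every $n\in \mZ$. At $H=1$, the relation $\res^C_1(t)=1$ implies that after identifying $S^\sigma$ with $S^1$ non-equivariantly, the underlying map $t|_1$ represents $1\in\pi_0(E)$; hence $\tilde t|_1$ is homotopic to the identity of $E\sm S^1$. At $H=C$, identify $\pi_n^C(E\sm S^1)=E^\sharp(C,n-1)$ and $\pi_n^C(E\sm S^\sigma)=E^\sharp(C,n-\sigma)$; then $\pi_n^C(\tilde t)$ coincides with the $\elRO$-algebra multiplication $t\cdot -\colon E^\sharp(C,n-1)\to E^\sharp(C,n-\sigma)$, which is an isomorphism by hypothesis (b) applied to the character $\sigma$ and the grading $m=n-1\in I_C$.

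The delicate aspect of the argument is that the inverse $s$ naturally lives in degree $\sigma-1\notin I_C$, and therefore cannot be produced directly from an isomorphism inside the effectively graded algebra $E^\sharp$. The bridge is the general fact that weak equivalences of $C$-spectra are detected by $\pi_n^H$-isomorphisms for $n\in\mZ$ and $H\leq C$, whose $\mZ$-gradings do lie within the effective range $I_C$.
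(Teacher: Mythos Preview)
Your argument for (a)$\Rightarrow$(b) matches the paper's. For (b)$\Rightarrow$(a) you take a genuinely different route: the paper sets up a five-lemma diagram built from the long exact sequence of the cofiber sequence $S^0\to S^\sigma\to C_+\sm S^1$, with two of the three multiplication-by-$t$ maps lying in the $I_C$-graded range, and deduces that the third (which does not) is an isomorphism. You instead recast (a) as the statement that the $E$-linearization $\tilde t\colon E\sm S^1\to E\sm S^\sigma$ is a $C$-equivariant weak equivalence, then verify this by checking $\pi_n^1$ and $\pi_n^C$ for integer $n$, both of which land inside $E^\sharp$. This is a clean and conceptually appealing reduction; the paper's five-lemma argument is more elementary in that it stays entirely within homotopy groups and avoids invoking the detection principle for $C$-equivariant weak equivalences.

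There is one small gap in your preliminary reformulation. In the converse direction you take an arbitrary homotopy inverse $g$ of $\tilde t$, set $s=g\circ(\eta\sm S^\sigma)$, and claim that $s\cdot t$ is represented by $g\circ t$. But the product $s\cdot t$ is represented by $\tilde s\circ t$, where $\tilde s$ is the $E$-linearization of $s$; there is no reason for $g$ to equal $\tilde s$ unless $g$ is itself an $E$-module map. The fix is immediate: once $\tilde t$ is a weak equivalence, apply $[S^\sigma,-]^C$ to obtain an isomorphism $\pi_{\sigma-1}^C(E)\to\pi_0^C(E)$, which is precisely multiplication by $t$ since $\tilde t$ \emph{is} $E$-linear; the preimage of $1$ is the desired $s$. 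Equivalently, take $g$ to be the inverse of $\tilde t$ in the homotopy category of $E$-modules rather than of $C$-spectra.
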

\begin{proof}
  (a)$\Longrightarrow$(b) 
  We let $\lambda:A\to C$ be any nontrivial character 
  of an elementary abelian 2-group.
  Then $\lambda^*(s)\in E_{\lambda-1}^A=\pi_{\lambda-1}^A(E)$ is a homotopy class such that
  \[ \lambda^*(s)\cdot t_\lambda \ = \ \lambda^*(s\cdot t)\ = \ 1\ . \]
  We write $n\in I_A$ as $n=k-V$ for $k\in \mZ$ and some $A$-representation $V$
  with trivial fixed points. Then the multiplication map
  \[ -\cdot \lambda^*(s) \ : \  E_{k+1}^A(S^{V\oplus\lambda})\ \to \ E_k^A(S^V) \]
  is inverse to
  \[ -\cdot t_\lambda \ : \ E^\sharp(A,n)=E_k^A(S^V)\ \to \
    E_{k+1}^A(S^{V\oplus\lambda}) = E^\sharp(A,n+1- \lambda) \ .\]
  In particular, multiplication by $t_\lambda$ is bijective.

  (b)$\Longrightarrow$(a) We consider the commutative diagram with exact rows:
  \[ \xymatrix{
      \pi_0^C(E)\ar[r]^-{\res^C_1}\ar[d]_{-\cdot t}&
      \pi_0(E)\ar[r]^-{\tr_1^C}\ar@{=}[d]&
      \pi_{\sigma-1}^C(E)\ar[r]^-{a \cdot }\ar[d]_{-\cdot t}&
      \pi_{-1}^C(E)\ar[r]^-{\res^C_1}\ar[d]_{-\cdot t}&
      \pi_{-1}(E)\ar@{=}[d]\\
      \pi_{1-\sigma}^C(E)\ar[r]_-{\res^C_1} &
      \pi_0(E)\ar[r]_-{\tr_1^C} &
      \pi_0^C(E)\ar[r]_-{a\cdot } &
      \pi_{-\sigma}^C(E)\ar[r]_-{\res^C_1} &   \pi_{-1}(E)
    } \]
  Of the three maps labeled as multiplication by $t$, the outer two
  lie in the range graded by the submonoid $I_C$ of $R O(C)$
  that is encoded by the $\elRO$-algebra $E^\sharp$. Hence those are isomorphisms
  by the invertibility hypothesis (a). By the five lemma, the middle map
  is thus an isomorphism, too. So the multiplicative unit $1$ in $\pi_0^C(E)$
  has an inverse, which is the desired $R O(C)$-graded inverse of $t$.
\end{proof}

\begin{construction}[Stable equivariant bordism]\label{con:stable_bordism}
  For a compact Lie group $G$, {\em stable equivariant bordism}
  is a $G$-equivariant homology theory $\mathfrak N^{G:S}_*$
  first considered by Br{\"o}cker and Hook \cite[\S 2]{broecker-hook}.
  It is a specific localization of $G$-equivariant bordism
  at geometrically defined classes
  associated to $G$-representations.
  If we restrict attention to elementary abelian 2-groups $A$,
  then the localization that defines stable equivariant bordism
  is exactly inverting the inverse Thom classes for the
  oriented $\elRO$-algebra $\cN$ of equivariant bordism:
  \[ \mathfrak N^{A:S}_* \ =\ (t^{-1}\cN)(A)_* \]
\end{construction}

Equivariant bordism $\cN$ is an initial oriented $\elRO$-algebra
by Theorem \ref{thm:main};
and the localization morphism $\cN\to t^{-1}\cN$
is initial among morphisms of oriented $\elRO$-algebras that send the unique orientation
of $\cN$ to an invertible inverse Thom class.
Together, these universal properties immediately yield the following corollary
about stable equivariant bordism:

\begin{cor}[Universal property of stable equivariant bordism]\label{cor:stable bordism} 
  Stable equivariant bordism  $(t^{-1}\cN,a,t)$
  is an initial invertibly oriented $\elRO$-algebra.
  The global 2-torsion group law of stable equivariant bordism 
  is an initial global 2-torsion group law.
\end{cor}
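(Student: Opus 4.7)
The proof plan is to deduce both statements from Theorem \ref{thm:main} together with the universal property of localization, with the adjunction $t^{-1}\dashv\psi$ providing the second claim essentially for free.

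For the first claim, I would argue that localization away from the $t$-classes exhibits $(t^{-1}\cN,a/t\cdot t, t)$ as the initial object in the category of oriented $\elRO$-algebras that receive a morphism from $\cN$ in which the inverse Thom class becomes invertible. Concretely, let $(Y,\bar a,\bar t)$ be any invertibly oriented $\elRO$-algebra. By Theorem \ref{thm:main}, there is a unique morphism $f\colon \cN\to Y$ of oriented $\elRO$-algebras. Because the $\bar t_\lambda$ are invertible in $Y$ for every nontrivial character $\lambda$, multiplication by $f(t_\lambda)=\bar t_\lambda$ is bijective on $Y(A,\star)$; consequently the universal property of the localization (Construction \ref{con:t-localization}), together with Proposition \ref{prop:generation}, allows $f$ to factor uniquely through the localization morphism $\cN\to t^{-1}\cN$. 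Uniqueness of the factorization is automatic from the uniqueness of $f$ itself and the fact that $\cN(A,\star)\to (t^{-1}\cN)(A,\star)$ has dense image in the appropriate sense (every element of $(t^{-1}\cN)(A,\star)$ is of the form $x/t_V$). Thus $t^{-1}\cN$ is an initial invertibly oriented $\elRO$-algebra.

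For the second claim, I would invoke the adjunction $t^{-1}\dashv\psi$ established in the theorem immediately preceding the corollary. Left adjoints preserve initial objects, and by the first claim $t^{-1}\cN$ is initial among invertibly oriented $\elRO$-algebras. Since the essential image of $\psi$ is precisely the subcategory of invertibly oriented $\elRO$-algebras, and $\psi$ is fully faithful, for any global 2-torsion group law $(\cG,e)$ we have natural bijections
\[ \Hom_{\text{g.2-t.g.l.}}(t^{-1}\cN,\cG)\ \iso\ \Hom_{\text{or.}\elRO}(\cN,\psi(\cG,e))\ \iso \ \{*\}, \]
the first from the adjunction applied to $(\cN,a,t)$ (using that $\cN\to t^{-1}(\psi(\cG,e))$ is the unit at the appropriate level), and the second from Theorem \ref{thm:main}. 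Hence the global 2-torsion group law $(t^{-1}\cN,a/t)$ is initial.

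The only step requiring any real care is the first factorization argument: one must check that the morphism $\cN\to Y$ of oriented $\elRO$-algebras really does extend to the $t$-localization as a morphism of $\elRO$-algebras (not merely as a morphism of the underlying rings), and that the extension respects both the pre-Euler class and the inverse Thom class. This is routine, however, since the $\elRO$-algebra structure maps on $t^{-1}\cN$ are formally induced from those on $\cN$, and $f$ by construction preserves both $a$ and $t$. There is no serious obstacle here; the essence of the corollary is that Theorem \ref{thm:main} combined with the adjunction of the previous section instantly produces both initiality statements.
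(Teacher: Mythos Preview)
Your proposal is correct and follows essentially the same approach as the paper: the paper's proof is just the sentence preceding the corollary, which says exactly that the result follows from Theorem \ref{thm:main} together with the universal property of the localization morphism $\cN\to t^{-1}\cN$. Your argument for the second claim via the adjunction $t^{-1}\dashv\psi$ (left adjoints preserve initial objects) is a clean way to phrase what the paper leaves implicit.
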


Combining Corollary \ref{cor:stable bordism} with the main result
of Br{\"o}cker and Hook \cite{broecker-hook}
yields an independent proof of a recent result of the first author,
namely that the global 2-torsion group law carried by the global Thom spectrum $\bMO$ is initial,
see \cite[Theorem D]{hausmann:group_law}.
Indeed, Theorem 4.1 of \cite{broecker-hook} says that
for every compact Lie group $G$, stable equivariant bordism is isomorphic
to the equivariant homology theory represented by the global Thom spectrum $\bMO$
of \cite[Example 6.1.7]{schwede:global}; see also \cite[Remark 6.2.38]{schwede:global}
for a different proof.
The underlying $G$-spectra of the global object $\bMO$ are the real analogues of
tom Dieck's `homotopical equivariant bordism' spectra \cite{tomDieck-bordism},
and they have been much studied since the 70's.
So via \cite[Theorem 4.1]{broecker-hook}, our Corollary \ref{cor:stable bordism}
becomes the statement that the global 2-torsion group law associated to $\bMO$
is initial; in other words, we recover Theorem D of \cite{hausmann:group_law}.
We want to emphasize that this earlier result of the first author
was a key motivation for the present project,
and some of our present arguments are inspired by arguments from \cite{hausmann:group_law}.
Our proof here differs from the one in \cite{hausmann:group_law}
in that we make no use of equivariant formal groups.

\begin{eg}[Mod 2 global Borel homology]
  The forgetful functor from the global stable homotopy category to the
  non-equivariant stable homotopy category has a lax symmetric monoidal right adjoint,
  the {\em global Borel} functor $b:\SH\to\GH$,
  see Theorem 4.5.1 and Construction 4.5.21 of \cite{schwede:global}.
  For every spectrum $R$ and every compact Lie group $G$,
  the underlying $G$-spectrum of the global Borel spectrum $b R$ represents
  $G$-equivariant Borel $R$-cohomology, i.e., $R$-cohomology of the homotopy orbit
  construction, thence the name. Because the Borel functor is lax symmetric monoidal, it takes
  non-equivariant homotopy ring spectra to global ring spectra.

  In particular, the global Borel spectrum $b(H\mF_2)$
  of the non-equivariant mod 2 Eilenberg-MacLane spectrum
  is a global ring spectrum. The adjunction unit
  $\eta:H\underline\mF_2\to b(H\mF_2)$ from the mod 2 global Eilenberg-MacLane spectrum
  is a morphism of global ring spectra.
  Since $H\underline\mF_2$ is orientable, so is $b(H\mF_2)$.
  The group $\pi_1^C(b(H\mF_2))$ is isomorphic to $H^{-1}(B C,\mF_2)$,
  and hence trivial. So the orientable global ring spectrum $b(H\mF_2)$
  has a unique inverse Thom class, which is necessarily the image of
  the unique inverse Thom class of $H\underline\mF_2$ under
  the morphism $\eta:H\underline\mF_2\to b(H\mF_2)$.
  Since the unique inverse Thom class of $H\underline\mF_2$ is additive,
  the unique inverse Thom class of $b(H\mF_2)$ is additive, too.

  All real vector bundles are orientable in mod 2 cohomology.
  In particular, there is a Thom class in $H^1(S^\sigma\sm_C EC_+;\mF_2)$
  for the line bundle over $B C$ associated to the sign representation.
  By the Thom isomorphism,
  the image of this Thom class under the isomorphism 
  \[ H^1(S^\sigma\sm_C EC_+;\mF_2) \ \iso \   \pi_{\sigma-1}^C( b(H\mF_2)) \]
  is an $R O(C)$-graded inverse to the unique inverse Thom class.
  So the unique inverse Thom class of $b(H\mF_2)$ is invertible,
  by Proposition \ref{prop:RO-invertible_is_invertible}.
\end{eg}

\begin{theorem}[Universal property of mod 2 Borel homology]\label{thm:Borel initial}
  Let $Y$ be an orientable $\elRO$-algebra.
  Evaluation at the unique inverse Thom class of the global Borel spectrum $b(H\mF_2)$ 
  is a bijection between the set of morphisms of orientable $\elRO$-algebras
  from $b(H\mF_2)^\sharp$ to $Y$, and the set of orientations of $Y$
  that are both additive and invertible.
\end{theorem}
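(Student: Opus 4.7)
The plan is to reduce Theorem \ref{thm:Borel initial} to an initiality statement for global $2$-torsion group laws via the adjunction of Section \ref{sec:global-2-torsion group}, and then verify that initiality by direct computation. The easy direction of the claimed bijection is routine: any morphism $g\colon b(H\mF_2)^\sharp \to Y$ of orientable $\elRO$-algebras sends the unique inverse Thom class $t$ to a class $\bar t = g(t)$ that restricts to $1$; additivity of $\bar t$ is inherited from the additivity relation in $b(H\mF_2)^\sharp$, and invertibility of $\bar t$ follows because the multiplicative inverse of $t_\lambda$ in $b(H\mF_2)^\sharp$ maps to a multiplicative inverse of $\bar t_\lambda$ in $Y$.

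For the hard direction, given an additively and invertibly oriented $(Y, \bar a, \bar t)$, I exploit that $b(H\mF_2)^\sharp$ is itself invertibly oriented, so the Section \ref{sec:global-2-torsion group} equivalence between invertibly oriented $\elRO$-algebras and global $2$-torsion group laws identifies the set of morphisms of oriented $\elRO$-algebras $(b(H\mF_2)^\sharp,a,t) \to (Y,\bar a,\bar t)$ with the set of morphisms of global $2$-torsion group laws $t^{-1}(b(H\mF_2)^\sharp) \to t^{-1}Y$. Dividing out $t_1 t_\mu t_2$ in the defining relation shows that additivity of the $\elRO$-algebra orientation corresponds exactly to the additive relation $e_\mu = e_1 + e_2$ for the associated global $2$-torsion group law, so the hard direction reduces to the statement that $t^{-1}(b(H\mF_2)^\sharp)$, equipped with its coordinate $a/t$, is an initial additive global $2$-torsion group law.

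To identify this initial object, I compute directly on both sides. The additive relation $e_\mu = e_1 + e_2$ propagates by naturality along characters of an elementary abelian $2$-group $A \cong C^n$ to yield $e_\lambda = \sum_{i\in S} e_{p_i}$ whenever $\lambda = \sum_{i\in S} p_i$; combined with the fundamental exactness property, this forces the initial additive $\cG$ to satisfy $\cG(A)_* = \mF_2[e_{p_1},\dots,e_{p_n}]$, a polynomial ring on degree $-1$ generators. On the other hand, the integer-graded part $b(H\mF_2)^\sharp(A)_* = \pi_*^A(b(H\mF_2))$ equals the mod $2$ Borel cohomology $H^{-*}(BA;\mF_2)$, which for $A \cong C^n$ is $\mF_2[x_1,\dots,x_n]$ on the mod $2$ Euler classes of the projection characters. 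The main obstacle will be to check that under these identifications the coordinate $a/t$ matches the universal mod $2$ Euler class: the pre-Euler class $a \in \pi_0^C(b(H\mF_2))$ comes from the $C$-fixed point inclusion $S^0 \to S^\sigma$, and the unique inverse Thom class $t$ is the inverse (in the $R O(C)$-graded sense) of the mod $2$ Thom class of the sign line bundle over $BC$, so $a/t$ is literally the classical mod $2$ Euler class in $H^1(BC;\mF_2)$, and naturality along characters propagates this identification to all of $A$.
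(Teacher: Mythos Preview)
Your proposal is correct and takes a genuinely different route from the paper. The paper factors through Theorem~\ref{thm:Bredon initial}: it shows that the adjunction unit $\eta^\sharp\colon (H\underline{\mF}_2)^\sharp\to b(H\mF_2)^\sharp$ exhibits the Borel theory as the $t$-localization of the Bredon theory, by checking that $t^{-1}(\eta^\sharp)$ is an isomorphism on integer-graded subrings (both sides being polynomial on the Euler classes $e_1,\dots,e_n$); then it simply composes the already-proved Bredon initiality with the universal property of $t$-localization. You instead pass directly to global $2$-torsion group laws via the adjunction of Section~\ref{sec:global-2-torsion group}, compute $t^{-1}(b(H\mF_2)^\sharp)(C^n)=H^{-*}(BC^n;\mF_2)=\mF_2[e_1,\dots,e_n]$, and verify the universal property among additive global $2$-torsion group laws from this polynomial description.

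What each approach buys: the paper's argument is shorter because it reuses Theorem~\ref{thm:Bredon initial}, and it makes the conceptual point that Borel cohomology is literally the $t$-localization of Bredon homology. Your argument is more self-contained, since Theorem~\ref{thm:Bredon initial} imports the full presentation of $H^\sharp(A,\star)$ from \cite{hausmann-schwede:Bredon}, which you avoid entirely; you only need the classical computation of $H^*(BC^n;\mF_2)$ and the identification of $a/t$ with the mod~$2$ Euler class. One expository remark: your phrase ``this forces the initial additive $\cG$ to satisfy $\cG(A)_*=\mF_2[e_{p_1},\dots,e_{p_n}]$'' deserves a sentence of justification --- the point is that generation by $e_1,\dots,e_n$ plus regularity of the sequence plus the grading (so that any homogeneous relation has bounded $e_n$-degree) together force polynomiality by a short induction on~$n$; alternatively, you can skip this and simply verify the universal property directly for the concrete object $t^{-1}(b(H\mF_2)^\sharp)$ once you know its rings are polynomial.
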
  
\begin{proof}
  We claim that the morphism of oriented $\elRO$-algebras
  \[  \eta^\sharp \ : \  H = ((H\underline\mF_2)^\sharp,a,t) \ \to \
    (b(H\mF_2)^\sharp,a,t)  \]
  exhibits the target as the localization of
  the source away from its unique inverse Thom class;
  more precisely: the induced morphism of global 2-torsion group laws $t^{-1}(\eta^\sharp)$
  obtained by localization away from $t$ is an isomorphism.
  To prove this, we write $b(H\mF_2)^\natural$ for the $\el_2$-algebra made from
  the integer graded homotopy rings of the global Borel spectrum $b(H\mF_2)$.
  Because $t$ acts invertibly in the global Borel theory,
  we must show that the unique extension
  \[  \tilde\eta\ : \ t^{-1}H\ \to \ b(H\mF_2)^\natural  \]
  of $\eta^\sharp$ is an isomorphism of $\el_2$-algebras.
  To this end, it suffices to show that $\tilde\eta$
  is an isomorphism in integer gradings
  for the elementary abelian 2-groups $C^n$ for all $n\geq 0$.
  The $I_A$-graded ring $H(A,\star)$ is generated
  as an $\mF_2$-algebra by the classes $a_\lambda$
  and $t_\lambda$ for all nontrivial $A$-characters $\lambda$,
  by Proposition \ref{prop:generation}.  
  So the integer graded ring $(t^{-1}H)(A)_*$ is generated by the classes
  $e_\lambda=a_\lambda/t_\lambda$ for all nontrivial $A$-characters $\lambda$.
  If three such characters satisfy a multiplicative relation $\alpha\cdot\beta\cdot\gamma=1$,
  then additivity of the orientation yields the relation
  \[  a_\alpha t_\beta t_\gamma\ +\ t_\alpha a_\beta t_\gamma\ +\ t_\alpha t_\beta a_\gamma \ = \ 0 \ .     \]
  Dividing by $t_\alpha t_\beta t_\gamma$ yields the relation $e_\alpha+e_\beta+e_\gamma=0$
  in $(t^{-1}H)(A)_{-1}$.
  In particular, for all $n\geq 0$, the integer graded ring $(t^{-1}H)(C^n)$
  is generated by the Euler classes $e_1,\dots,e_n$ of the
  projections $p_i:C^n\to C$ to the $n$ factors.

  On the other hand, $b(H\mF_2)^\natural(C^n)$ is the mod 2 cohomology ring of $B(C^n)$,
  which is an $\mF_2$-polynomial algebra on the corresponding Euler classes
  $e_1,\dots,e_n$ of the projections.
  So the map $\tilde\eta(C^n):( t^{-1}H)(C^n)\to b(H\mF_2)^\natural(C^n)$ is
  surjective. Since the Euler classes  $e_1,\dots,e_n$
  do not satisfy any nontrivial polynomial relations  in the target,
  the Euler classes do not satisfy any nontrivial polynomial relations in the source either.
  So both rings are $\mF_2$-polynomial algebras on the Euler classes
  $e_1,\dots,e_n$, and the map is an isomorphism.
  
  Now we can conclude the argument.
  The source of the morphism $\eta^\sharp$ is a Bredon $\elRO$-algebra 
  in the sense of Definition \ref{def:Bredon};
  so by Theorem \ref{thm:Bredon initial}, it is initial among
  oriented $\elRO$-algebras equipped with an additive orientation.
  Since the target of $\eta^\sharp$ is a localization of the source
  away from the inverse Thom class $t$, the target
  is an initial oriented $\elRO$-algebra whose orientation is
  both additive and invertible.
\end{proof}

\section{Epilogue: reproving classical results on equivariant bordism} \label{sec:classical}

In this final section we describe the relationship between the framework we develop in this paper
and previous work on equivariant bordism rings,
specifically that by Conner--Floyd \cite{conner-floyd}, Alexander \cite{alexander},
and Firsching \cite{firsching}.
We show that these classical results can be readily deduced from our formalism,
and in fact generalize to a large class of oriented $\elRO$-algebras.

\subsection{The Conner--Floyd exact sequence}

We start with a generalization of the Conner--Floyd
exact sequence \cite[Theorem 28.1]{conner-floyd} for bordism of involutions.
For this, we let $(X,a,t)$ be an oriented $\elRO$-algebra, and
$A$ an elementary abelian 2-group. As we used many times throughout this paper,
the map $-/1\ \colon \ X(A)_* \to \Phi^A_* X$ is injective.
For $A=C$, it is furthermore straightforward to describe the structure of the cokernel.

\begin{prop} \label{prop:exact_sequence}
  For every oriented $\elRO$-algebra $(X,a,t)$ the cokernel of the monomorphism
  $-/1:X(C)_*\to \Phi^C X$
  is free as an $X(1)_*$-module with basis the classes $(t/a)^n$ for $n\geq 1$.
\end{prop}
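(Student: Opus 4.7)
The plan is to filter $\Phi^C X$ by the $a$-adic filtration and use the defining exactness property of an orientable $\elRO$-algebra to compute the associated graded, then show that the classes $(t/a)^n$ split this filtration modulo $-/1$.

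First I would set, for each $k \in \mZ$ and $n \geq 0$,
\[ F^n \Phi^C_k X \ = \ \{ x/a^n\ :\ x\in X(C,k-n\sigma) \} \ \subseteq \ \Phi^C_k X \ . \]
Since multiplication by $a$ is injective, the map $X(C,k-n\sigma) \to F^n\Phi^C_k X$, $x \mapsto x/a^n$, is bijective, and the transition $F^{n-1} \hookrightarrow F^n$ corresponds to $a\cdot - : X(C,k-(n-1)\sigma) \to X(C,k-n\sigma)$. Every element of $\Phi^C_k X$ is of the form $x/a^n$ for some $n$, so $\bigcup_n F^n = \Phi^C_k X$, and by definition $F^0 \Phi^C_k X$ is the image of $-/1 : X(C)_k \to \Phi^C_k X$.

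Next I would invoke the fundamental exact sequence for $(A,\lambda,m)=(C,\sigma,k-(n-1)\sigma)$, which reads
\[ 0 \to X(C,k-(n-1)\sigma) \xra{a\cdot} X(C,k-n\sigma) \xra{\res^C_1} X(1,k-n) \to 0 \ . \]
Under the identification $F^n \cong X(C,k-n\sigma)$, the subgroup $F^{n-1}$ corresponds precisely to $a\cdot X(C,k-(n-1)\sigma)$, so the residue map induces an isomorphism
\[ \overline{\res^C_1}\ :\ F^n \Phi^C_k X / F^{n-1} \Phi^C_k X \ \xra{\iso}\ X(1,k-n) \ . \]
The third step is to locate the classes $(t/a)^n$ in this filtration. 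Since $t^n \in X(C,n-n\sigma)$ and $\res^C_1(t)=1$, multiplying by any $y \in X(1,k-n)$ yields $p^*(y)\cdot t^n \in X(C,k-n\sigma)$ with $\res^C_1(p^*(y)\cdot t^n) = y$. Thus $p^*(y)(t/a)^n \in F^n\Phi^C_k X$ represents $y$ in the quotient $F^n/F^{n-1}$.

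Finally, setting $N_k = \Phi^C_k X / F^0\Phi^C_k X$ and using the induced filtration $\bar F^n = F^n/F^0$ (which satisfies $\bar F^0 = 0$ and $\bar F^n / \bar F^{n-1} \iso X(1,k-n)$ for $n \geq 1$), I would conclude by a straightforward induction on filtration length. For surjectivity: a class $z \in N_k$ lifts into some $F^N$; by the identification of the top graded piece, there is a unique $y_N \in X(1,k-N)$ with $z - p^*(y_N)(t/a)^N \in \bar F^{N-1}$, and induction finishes. For linear independence: a relation $\sum_{n=1}^N p^*(y_n)(t/a)^n \in F^0$ projects modulo $\bar F^{N-1}$ to $y_N \in X(1,k-N)$, forcing $y_N=0$, after which induction again concludes.

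The only place requiring care is the identification of $F^n/F^{n-1}$ — one must be sure that the map $x\mapsto x/a^n$ is injective (which uses regularity of $a$) and that the comparison with the exact sequence is compatible with the $X(1)_*$-module structure via inflation $p^*$; everything else is bookkeeping around the filtration.
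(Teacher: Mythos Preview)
Your proof is correct and follows essentially the same approach as the paper: both use the fundamental exact sequence and induction on $n$ to analyze $X(C,*-n\sigma)$, observe that $\Phi^C_* X$ is the colimit along multiplication by $a$, and use the classes $t^n$ (restricting to $1$) to split off a free summand. The paper phrases this as a direct sum decomposition $X(C,*-n\sigma)=a^n\cdot X(C)_*\oplus\bigoplus_{i=1}^n X(1)_*\cdot a^{n-i}t^i$ obtained by induction, then passes to the colimit; your filtration argument is simply the same induction unpacked.
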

  \begin{proof}
  The fundamental exact sequence and induction on $n$ shows that
  $X(C,*-n\sigma)$ is the direct sum of the image of $\cdot a^n:X(C)_*\to X(C,*-n\sigma)$
  and a free graded $X(1)_*$-module spanned by $a^{n-i} t^i$
  for $i=1,\dots, n$. Since $\Phi^C_* X$ is the colimit of these groups
  along multiplication by $a$, the claim follows by passage to colimit in $n$.
\end{proof}

Proposition \ref{prop:exact_sequence} also shows that
if $\Phi^C X$ is free as a graded module over $X(1)_*$,
then $X(C)_*$ is projective as a graded  $X(1)_*$-module.
If, in addition, the $\mF_2$-algebra $X(1)_*$ is graded-connected, then this implies that 
$X(C)_*$ is even free as a graded  $X(1)_*$-module.

\subsection{Alexander's theorem}
It was already observed by Conner and Floyd
as a consequence of their exact sequence \cite[Theorem 28.1]{conner-floyd}
that the $C$-equivariant bordism ring is free as a graded module
over the non-equivariant bordism ring.
Alexander provided an explicit $\cN_*$-basis of $\cN^C_*$ in
\cite[Theorem 1.2]{alexander}.
We generalize Alexander's result to certain kinds of oriented $\elRO$-algebras,
with a different, and arguably simpler, proof.

We recall the division operator $\Gamma:X(C)_k\to X(C)_{k+1}$
from construction \eqref{eq:define_Gamma},
which is characterized by the equation $a\cdot \Gamma(x)= t\cdot x+ \epsilon^*(x)$.
Here $\epsilon:C\to C$ is the trivial homomorphism, so that $\epsilon^*$
is the composition of restriction to the trivial group, followed by inflation.

\begin{prop}\label{prop:compare_bases}
  Let $(X,a,t)$ be an oriented $\elRO$-algebra.
  Suppose that $X(1)_*$ is graded-connected as an $\mF_2$-algebra.
  Let $(b_i)_{i\in I}$ be a family of homogeneous elements of $X(C)_*$
  of positive degrees.
  Then the following conditions are equivalent:
  \begin{enumerate}[\em (a)]
  \item The classes $1$ and $\Gamma^n(b_i)$ for $n\geq 0$ and $i\in I$ form a
    basis of $X(C)_*$ as a graded module over $X(1)_*$.
  \item The classes $1$ and $b_i/1$ for $i\in I$ form a
    basis of $\Phi^C_* X$ as a graded module over the polynomial ring $X(1)_*[t/a]$.
  \end{enumerate}
\end{prop}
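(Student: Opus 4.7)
The plan is to reformulate both (a) and (b) as statements about explicit $X(1)_*$-bases of $\Phi^C X$, and then exhibit the two bases as related by a triangular change of basis.

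Write $s = t/a \in \Phi^C_1 X$, let $j: X(C)_* \to \Phi^C X$ denote the injection $u \mapsto u/1$, and let $\iota: X(1)_* \to \Phi^C X$ denote the inflation. By Proposition \ref{prop:exact_sequence}, there is an $X(1)_*$-module decomposition
\[ \Phi^C X \ = \ j(X(C)_*) \ \oplus \ \bigoplus_{n \geq 1} X(1)_* \cdot s^n. \]
Under this decomposition, condition (a) is equivalent to the statement that
\[ \mathcal{A} \ = \ \{1\}\ \cup \ \{j(\Gamma^n(b_i)) : n \geq 0,\, i \in I\} \ \cup \ \{s^n : n \geq 1\} \]
is an $X(1)_*$-basis of $\Phi^C X$, and condition (b) is equivalent to the statement that
\[ \mathcal{B} \ = \ \{1\}\ \cup \ \{s^n : n \geq 1\} \ \cup \ \{s^n \cdot j(b_i) : n \geq 0,\, i \in I\} \]
is an $X(1)_*$-basis of $\Phi^C X$ (the $X(1)_*[s]$-freeness in (b) being equivalent to this $X(1)_*$-freeness).

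The key identity is obtained by dividing the defining relation $a \cdot \Gamma(u) = t \cdot (u + \epsilon^*(u))$ by $a$ in $\Phi^C X$: for every $u \in X(C)_*$,
\[ j(\Gamma(u)) \ = \ s \cdot j(u) + s \cdot \iota(i^*(u)) . \]
A straightforward induction on $n \geq 1$ then yields, for every $i \in I$,
\[ j(\Gamma^n(b_i)) \ = \ s^n \cdot j(b_i) + \sum_{k = 1}^n i^*(\Gamma^{n-k}(b_i)) \cdot s^k . \]
This identity expresses each element $j(\Gamma^n(b_i)) \in \mathcal{A}$ as an $X(1)_*$-linear combination of the element $s^n j(b_i) \in \mathcal{B}$ and the common elements $s, s^2, \ldots, s^n$. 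In characteristic $2$ the same formula, rearranged, expresses each $s^n j(b_i) \in \mathcal{B}$ as an $X(1)_*$-linear combination of elements of $\mathcal{A}$.

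Since each $b_i$ has strictly positive degree, only finitely many basis elements of $\mathcal{A}$ or $\mathcal{B}$ lie in any given internal degree. The change of basis between $\mathcal{A}$ and $\mathcal{B}$ fixes $\{1\} \cup \{s^n : n \geq 1\}$ and relates $j(\Gamma^n(b_i))$ to $s^n j(b_i)$ by an upper-unitriangular matrix (in the filtration by $s$-power), which is invertible over $\mF_2$. Hence $\mathcal{A}$ spans (resp.\ is $X(1)_*$-linearly independent) if and only if $\mathcal{B}$ is, proving that (a) and (b) are equivalent. The main care point is the bookkeeping connecting the two formulations to the basis statements for $\Phi^C X$ via the decomposition above and the verification that the triangular change of basis does not collapse degrees; once the iterated formula for $j(\Gamma^n(b_i))$ is established, the equivalence reduces to standard graded linear algebra.
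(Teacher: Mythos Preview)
Your argument is correct and follows the same outline as the paper's: both proofs use Proposition~\ref{prop:exact_sequence} to recast (a) and (b) as statements about $X(1)_*$-bases of $\Phi^C_* X$, derive the identity $j(\Gamma(u))=s\cdot j(u)+i_1^*(u)\cdot s$, and then pass between the two candidate bases by a triangular change. The difference is only in how the triangular step is executed: the paper works modulo the augmentation ideal $X(1)_{>0}$ and invokes a graded Nakayama argument (this is where the graded-connected hypothesis is used), whereas you write out the explicit closed formula $j(\Gamma^n(b_i))=s^n j(b_i)+\sum_{k=1}^n i_1^*(\Gamma^{n-k}(b_i))\,s^k$ and read off block-unitriangularity directly. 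Your version is in fact slightly sharper, since the block-unitriangular matrix is manifestly invertible over $X(1)_*$ without any appeal to connectedness.

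Two minor inaccuracies that do not affect the argument: the assertion that only finitely many basis elements lie in each degree is neither justified (there could be infinitely many $b_i$ of a given degree) nor needed, since each element of $\mathcal{A}$ is a \emph{finite} $X(1)_*$-combination of elements of $\mathcal{B}$ and vice versa, which is all the triangular argument requires; and the change-of-basis matrix has entries in $X(1)_*$, not $\mF_2$, but unitriangularity makes it invertible over any ring.
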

\begin{proof}
  Condition (a) and Proposition \ref{prop:exact_sequence} together imply that the classes
  $(t/a)^n$ and $\Gamma^n(b_i)/1$ for $n\geq 0$ and $i\in I$ form a
  basis of $X(C)_*$ as a graded module over $X(1)_*$.
  The defining relation of the division operator yields
  \[ \Gamma(x)/1 \ = \  (t/a)\cdot x/1 +\epsilon(x)\cdot (t/a)\ .  \]
  If the dimension of the class $x$ is positive, this shows that
  $\Gamma(x)/1$ is congruent to $(t/a)\cdot x/1$ modulo the graded ideal of
  positive degree elements in $X(1)$.
  By induction on $n$ we see that $\Gamma^n(x)/1$ is congruent to $(t/a)^n\cdot x/1$
  modulo the ideal of  positive degree elements in $X(1)$.
  In particular, the class $\Gamma^n(b_i)/1$ is congruent to $(t/a)^n\cdot b_i$.
  Since $X(C)_*$ is bounded below and $X(1)_*$ is a graded-connected $\mF_2$-algebra,
  this shows that the classes
  $(t/a)^n$ and $(t/a)^n\cdot b_i/1$ for $n\geq 0$ and $i\in I$ also form a
  basis of $X(C)_*$ as a graded module over $X(1)_*$. Hence the classes
  $1$ and $b_i/1$ for $i\in I$ form a basis of $X(C)_*$
  as a graded module over $X(1)_*[t/a]$, i.e., condition (b) holds.
  The other implication is proved by reversing the argument.
\end{proof}

\begin{cor}\label{cor:Alexander}
  Let $(X,a,t)$ be an oriented $\elRO$-algebra.
  Suppose that $X(1)_*$ is graded-connected as an $\mF_2$-algebra.
  Let $(y_i)_{i\in I}$ be a family of homogeneous elements of $X(C)_*$
  of positive degree such that $\Phi^C_* X$ is a polynomial algebra
  over $X(1)_*$ on the classes $(y_i)_{i\in I}$ and the class $t/a$.
  Then the classes $1$ and the classes $\Gamma^n(y_{i_1}\cdot\ldots\cdot y_{i_r})$
  for all $n\geq 0$, $r\geq 1$ and $i_1,\dots,i_r\in I$ form
  a basis of $X(C)_*$ as a graded module over $X(1)_*$.
\end{cor}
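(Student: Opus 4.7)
The plan is to apply Proposition \ref{prop:compare_bases} to the family $(b_j)_{j \in J}$ consisting of the unordered monomials $y_{i_1} \cdots y_{i_r}$ with $r \geq 1$ and $i_1 \leq \cdots \leq i_r$ (after fixing a total order on $I$). Each such monomial is a homogeneous element of $X(C)_*$ of positive degree, because each $y_i$ is, so this family is eligible for the proposition.

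The point is then to verify condition (b) of Proposition \ref{prop:compare_bases} for this family. By hypothesis, $\Phi^C_* X$ is a polynomial algebra over $X(1)_*$ on the classes $(y_i/1)_{i \in I}$ together with $t/a$. Rewriting such a polynomial algebra as
\[ \Phi^C_* X \ = \ \big( X(1)_*[t/a] \big)[y_i/1 : i \in I], \]
it follows immediately that $\Phi^C_* X$ is free as a graded $X(1)_*[t/a]$-module with basis given by the monomials in the $y_i/1$'s, i.e., by the class $1$ together with the classes $(y_{i_1} \cdots y_{i_r})/1$ for $r \geq 1$ and $i_1 \leq \cdots \leq i_r$. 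This is precisely condition (b) of Proposition \ref{prop:compare_bases} for our chosen family.

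Proposition \ref{prop:compare_bases} then yields condition (a): the class $1$ together with the classes $\Gamma^n(y_{i_1}\cdots y_{i_r})$ for $n \geq 0$, $r \geq 1$, and $i_1 \leq \cdots \leq i_r$ form an $X(1)_*$-basis of $X(C)_*$. This is exactly the conclusion of the corollary, up to the convention that in the statement $(i_1,\dots,i_r)$ ranges over unordered tuples (equivalently, ordered tuples modulo reordering), since the products $y_{i_1}\cdots y_{i_r}$ depend only on the underlying multiset. There is no real obstacle; the entire content lies in Proposition \ref{prop:compare_bases}, and the present corollary is just a clean packaging of the special case in which a polynomial basis of $\Phi^C_* X$ over $X(1)_*[t/a]$ is available.
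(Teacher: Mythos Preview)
Your proof is correct and follows essentially the same approach as the paper: both verify condition (b) of Proposition \ref{prop:compare_bases} for the family of nontrivial monomials in the $y_i$'s by rewriting the polynomial hypothesis as freeness over $X(1)_*[t/a]$, then invoke the implication (b)$\Rightarrow$(a). Your version is in fact slightly more careful about including the unit $1$ among the basis elements and about the ordering convention on the index tuples.
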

\begin{proof}
  We let $\mathcal B$ be the set of all monomials
  of degree at least one in the classes $y_i/1$ for all $i\in I$.
  The hypothesis that $\Phi^C_* X$ is polynomial over $X(1)_*$
  on the classes $(y_i)_{i\in I}$ and $t/a$
  is equivalent to the property that $\mathcal B$ is a basis of
  $\Phi^C_* X$ as module over $X(1)_*[t/a]$.
  Proposition \ref{prop:compare_bases} (a) then yields the claim.
\end{proof}

For equivariant bordism, we obtain Alexander's $\cN_*$-basis of $\cN^C_*$
by letting $y_i\in \cN_i^C$ be the bordism class of projective space
$P(\sigma \oplus \mR^i)$, for $i=2,3,\dots$.
The $C$-fixed points of $P(\sigma \oplus \mR^i)$ have two components:
an isolated fixed point $P(\sigma)$,
and the space $\mR P^{i-1}$ with normal bundle the tautological line bundle.
So Proposition \ref{prop:top SW power} shows that the classes $y_i/1$ for $i\geq 1$
and $a/t$ form a set of polynomial generators of $\Phi^C_* \cN$, as desired.
So Corollary \ref{cor:Alexander} applies, and we recover \cite[Theorem 1.2]{alexander}.

We can also apply Corollary \ref{cor:Alexander}
to the classes $\zeta_i\in \cN_{i+1}^C$ from Construction \ref{con:zeta}, for $i\geq 1$.
This way we obtain a different $\cN_*$-basis of $\cN^C_*$, given by 1
and the classes $\Gamma^n(\zeta_{i_1}\cdot\ldots\cdot \zeta_{i_r})$
for all $n\geq 0$, $r\geq 1$ and $i_1,\dots,i_r\geq 1$.

\subsection{Firsching's pullback square}
Finally we explain how a generalization of the main results of Firsching's paper \cite{firsching} drops out of our theory. In the following theorem we will write
\[ x_\lambda \ = \ t_\lambda/a_\lambda \ \in \ \Phi^A_1 X\ , \]
where $\lambda$ is any nontrivial $A$-character.
And we write  $(\Phi_*^A X)[x_\lambda^{-1}]$ for the localization
of the geometric fixed point ring away from the classes
$x_\lambda$ for all nontrivial $A$-characters $\lambda$.

\begin{theorem}\label{thm:pullback}
  Let $(X,a,t)$ be an oriented $\elRO$-algebra. Let $A$ be an elementary abelian 2-group
  such that for some (hence any) index two subgroup $K$ of $A$
  and every nontrivial $K$-character $\mu$,
  the class $x_\mu=t_\mu/a_\mu$ is a regular element of the ring $\Phi^K_* X$. Then the
  following square is a pullback of $\mZ$-graded rings:
    \[ \xymatrix@C=25mm{
        X(A)_*\ar[r]^-{-/1}\ar[d] & \Phi^A_* X\ar[d]\\
        (t^{-1} X)(A)_*\ar[r]_-{z/t_V\mapsto z/a_V\cdot x_V^{-1}} & (\Phi_*^A X)[x_{\lambda}^{-1}\colon\lambda\in A^\circ]
      } \]
\end{theorem}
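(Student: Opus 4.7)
The plan is to prove the pullback property by verifying uniqueness and existence of lifts. Uniqueness is immediate: the map $-/1\colon X(A)_*\to\Phi^A_*X$ is injective because multiplication by $a_V$ is injective on $X(A,k-V)$ for any $V$ with $V^A=0$. For existence, I would start with $y\in\Phi^A_kX$ and $w\in(t^{-1}X)(A)_k$ having matching images in $(\Phi^A_*X)[x_\lambda^{-1}]$, write $y=x/a_V$ and $w=z/t_W$ with $V^A=W^A=0$, and identify $(\Phi^A_*X)[x_\lambda^{-1}]$ with the integer graded part of the iterated localization $X(A,\star)[a_\lambda^{-1},t_\lambda^{-1}]$. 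The matching condition then translates to $xt_W=za_V$ in this doubly localized ring. Since $X(A,\star)$ injects into $X(A,\star)[a_\lambda^{-1}]$ by regularity of the $a$-classes, and the further $t$-localization has kernel consisting of $t$-torsion, the equation pulls back to produce a $t$-monomial $t^M$ with
\[ t^M\cdot(xt_W-za_V)\ =\ 0 \qquad\text{in } X(A,\star). \]

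The crux is then to extract divisibility of $x$ by $a_V$ from this relation using the regularity hypothesis. First I would promote the regularity of $x_\mu$ in $\Phi^K_*X$ to regularity of $t_\mu$ in all of $X(K,\star)$: if $r\cdot t_\mu=0$ for $r\in X(K,k-V)$ with $V^K=0$, then $(r/a_V)\cdot x_\mu=0$ in $\Phi^KX$, so $r/a_V=0$ by the hypothesis, and $r=0$ by regularity of $a_V$. The ``hence any'' clause makes this conclusion available at every index-$2$ subgroup of $A$, so that for every nontrivial $A$-character $\lambda$ with kernel $K_\lambda$, the restriction $\res^A_{K_\lambda}(t^Mt_W)$ is a product in $X(K_\lambda,\star)$ of $t_\mu$-classes, each equal to $1$ (when $\mu|_{K_\lambda}$ is trivial) or regular (when it is nontrivial), and hence itself regular. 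Proposition~\ref{prop:coprime}(i), applied with $y'=t^Mt_W$, then shows that $x$ is divisible by $a_\lambda^{m_\lambda}$ if and only if $x\cdot t^Mt_W$ is, where $m_\lambda$ is the multiplicity of $\lambda$ in $V$. But $x\cdot t^Mt_W=t^M\cdot z\cdot a_V$ is visibly divisible by $a_V$, hence by $a_\lambda^{m_\lambda}$. Since this holds for every nontrivial $\lambda$, Proposition~\ref{prop:coprime}(ii) produces $u\in X(A)_k$ with $x=u\cdot a_V$, i.e.\ $u/1=y$.

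The companion relation $u/1=w$ in $(t^{-1}X)(A)_*$ then drops out by substituting $x=u\,a_V$ into $t^M(xt_W-za_V)=0$ and cancelling the regular element $a_V$, yielding $t^M(u\,t_W-z)=0$, which says precisely that $u/1=z/t_W=w$ after inverting the $t$-classes. The main obstacle, and the reason for the regularity hypothesis in the theorem, is exactly this regularity transfer: without knowing that certain $t$-classes are non-zero-divisors, one can neither clear the auxiliary monomial $t^M$ nor invoke Proposition~\ref{prop:coprime}. The hypothesis provides just enough input to bootstrap regularity from $\Phi^K_*X$ down to $X(K,\star)$ at all index-$2$ subgroups simultaneously, which then propagates via restriction to make Proposition~\ref{prop:coprime} applicable at every nontrivial $A$-character.
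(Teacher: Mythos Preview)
Your proposal is correct and follows essentially the same approach as the paper: both promote the regularity hypothesis from $\Phi^K_*X$ to regularity of $t_\mu$ in $X(K,\star)$, obtain a relation of the form $t_U\cdot x\cdot t_W = t_U\cdot z\cdot a_V$ in $X(A,\star)$, and then invoke Proposition~\ref{prop:coprime} to deduce that $x$ is divisible by $a_V$. The only cosmetic difference is that the paper, rather than explicitly verifying the companion relation in $(t^{-1}X)(A)_*$, observes that both horizontal maps in the square are injective, which makes that step automatic.
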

\begin{proof}
  We claim that for every index two subgroup $K$ of $A$ and every $K$-character $\mu$,
  the class $t_\mu$ is a regular element in the $I_K$-graded ring $X(K,\star)$.
  Indeed, suppose that $y\cdot t_\mu=0$ for some $y\in X(K,k-V)$ with $k\in\mZ$ and $V^K=0$.
  Then 
  \[ 0 \ = \ (y\cdot t_\mu) / (a_V\cdot a_\mu) \ = \  (y/a_V)\cdot (t_\mu/a_\mu) \]
  in $\Phi_*^A X$. So $y/a_V=0$ by the regularity hypothesis.
  Because the map $-/a_V$ is injective, this implies $y=0$.
  Because all the classes $t_\mu$ for $\mu\in K^\circ$ are regular,
  we also deduce that for every $A$-representation $U$,
  the class $\res^A_K(t_U)=t_{U|_K}$ is a regular element in $X(K,\star)$.
  
  Now we turn to the proof of the theorem.
  Since the horizontal maps in the square are injective, we only need to show the following claim:
  let $y\in \Phi^A_k X$ be a class whose image in $(\Phi^A_*X)[x_\lambda^{-1}]$ is
  in the image of $(t^{-1}X)(A)_k$.  Then the class $y$ is effective.
  We suppose that $y=y'/a_W$ for some $A$-representation $W$ with trivial fixed points,
  and $y'\in X(A,k-W)$. By assumption there is 
  another $A$-representation $V$ with trivial fixed points and a class $z\in X(A,k-V)$
  such that $(z/a_V)\cdot x_V^{-1}=y=y'/a_W$ in the localization $(\Phi_*^A X)[x_{\lambda}^{-1}]$.
  Hence there is another $A$-representation $U$ with trivial fixed points such that 
  \[ (a_W\cdot z\cdot t_U)/a_{W\oplus V\oplus U}\ = \ 
    (z/a_V)\cdot x_U \ = \  (y'/a_W)\cdot x_V \cdot x_U \ = \
    (y'\cdot t_V\cdot t_U)/a_{W\oplus V\oplus U}   \]
  in the geometric fixed point ring $\Phi_*^A X$.
  Since $-/a_{W\oplus V\oplus U}$ is injective, this forces
  \[ z\cdot t_U\cdot a_W\ = \ y'\cdot t_{V\oplus U}  \]
  in $X(A,k-(W\oplus V\oplus U))$.
  Now we let $\lambda$ be an $A$-character with kernel $K$,
  and we let $m$ be the multiplicity of $\lambda$ in $W$.
  Then the left hand side of the previous equation is divisible by $a_\lambda^m$.
  Because the class $\res^A_K(t_{V\oplus U})$  is regular in $X(K,\star)$, 
  Proposition \ref{prop:coprime} (i) shows that also $y'$ is divisible by  $a_\lambda^m$.
  Proposition \ref{prop:coprime} (ii) then shows $y'$ is divisible by  $a_W$,
  i.e., $y'=y''\cdot a_W$ for a unique $y''\in X(A,k)$.
  Thus the class $y=y'/a_W=y''/1$ is effective.
\end{proof}

For the oriented $\elRO$-algebra $\cN$ of equivariant bordism,
all the geometric fixed point rings $\Phi^K \cN$ are polynomial algebras over
the non-equivariant bordism ring $\cN_*$, and hence they do not contain any zero-divisors.
So Theorem \ref{thm:pullback} shows that for every elementary abelian 2-group $A$,
the followings square is a pullback:
\[ \xymatrix{ \cN_*^A\ar[r] \ar[d] & \Phi^A_*\cN \ar[d]\\
    \mathfrak N_*^{A:S} \ar[r] & (\Phi^A_* \cN)[x_{\lambda}^{-1}\colon\lambda\in A^\circ]} \]
Here $\mathfrak N_*^{A:S} =t^{-1}\cN$ is stable $A$-equivariant bordism, 
see Construction \ref{con:stable_bordism}.
Firsching writes $e_\lambda^{-1}$ for $x_\lambda$,
and he uses the $A$-homology theory represented by the global Thom spectrum $\bMO$
instead of $\mathfrak N_*^{A:S}$; these equivariant homology theories
are isomorphic by \cite[Theorem 4.1]{broecker-hook}.
So modulo notation and the identification of $\Phi^A_*\cN$ with a polynomial $\cN_*$-algebra,
this is the main result Theorem 3.18 of \cite{firsching}.

\end{document}